\documentclass[a4paper,12pt]{article}
\usepackage{amsmath,
            amssymb,
            latexsym,
            amsthm,
            enumerate,
            enumitem,
            mathrsfs,
            mathtools,
            nicefrac,
            dsfont}

\usepackage[left=25mm,right=25mm,top=15mm,bottom=15mm,includeheadfoot]{geometry}
\usepackage{xcolor}
\usepackage{graphicx}
\usepackage{epstopdf}
\usepackage[nocompress]{cite}

\usepackage{comment}

\usepackage[T1]{fontenc}
\usepackage[utf8]{inputenc}
\usepackage[english]{babel}

\usepackage{hyperref}
\usepackage[capitalise,noabbrev]{cleveref}
\crefname{enumi}{item}{items}

\newcommand*{\ol}{\overline}

\newcommand{\dx}{\mathop{}\!\mathrm{d}}

\newcommand*{\bB}{\mathbf{B}}
\newcommand*{\bC}{\mathbf{C}}

\newcommand*{\bK}{\mathbf{K}}

\newcommand*{\bN}{\mathbf{N}}

\newcommand*{\bX}{\mathbf{X}}

\newcommand{\fD}{\mathfrak{D}}

\newcommand{\fM}{\mathfrak{M}}

\newcommand{\fb}{\mathfrak{b}}

\newcommand{\fh}{\mathfrak{h}}

\newcommand{\fr}{\mathfrak{r}}

\newcommand*{\cB}{\mathcal{B}}

\newcommand*{\cD}{\mathcal{D}}

\newcommand*{\cF}{\mathcal{F}}

\newcommand*{\cL}{\mathcal{L}}

\newcommand*{\cR}{\mathcal{R}}

\newcommand*{\cW}{\mathcal{W}}

\newcommand*{\N}{\mathbb{N}}

\newcommand*{\E}{\mathbb{E}}

\newcommand*{\R}{\mathbb{R}}

\newcommand{\with}{\curvearrowleft}

\newcommand*{\eps}{\varepsilon}

\newcommand{\mdist}{\bar{r}}

\newcommand*{\tr}{\operatorname{tr}}
\newcommand*{\Id}{\operatorname{Id}}

\newcommand*{\diam}{\operatorname{diam}}
\newcommand*{\dist}{\operatorname{dist}}

\newcommand{\ind}{\mathds{1}}

\newcommand{\paramANN}{\mathcal{P}}

\newcommand{\be}{\begin{eqnarray*}}
\newcommand{\ee}{\end{eqnarray*}}
\newcommand{\ben}{\begin{eqnarray}}
\newcommand{\een}{\end{eqnarray}}
\newcommand{\bi}{\begin{itemize}}
\newcommand{\ei}{\end{itemize}}

\newcommand{\SmallSum}[2]{ {\textstyle\sum\limits_{#1}^{#2}}}

\DeclarePairedDelimiter{\pr}{(}{)}
\DeclarePairedDelimiter{\cu}{\{}{\}}
\DeclarePairedDelimiter{\br}{[}{]}

\DeclarePairedDelimiter{\abs}{\lvert}{\rvert}
\DeclarePairedDelimiter{\norm}{\lVert}{\rVert}
\DeclarePairedDelimiter{\ceil}{\lceil}{\rceil}

\NewDocumentCommand{\fnorm}{sO{}m}{%
  {\IfBooleanTF{#1}
    {\fnormhelp{\left|}{\right|}{#3}}
    {\fnormhelp{#2|}{#2|}{#3}}}
}
\makeatletter
\newcommand{\fnormhelp}[3]{\mathpalette\fnormhelp@i{{#1}{#2}{#3}}}
\newcommand{\fnormhelp@i}[2]{\fnormhelp@ii#1#2}
\newcommand{\fnormhelp@ii}[4]{%
  \sbox\z@{$\m@th#1#2#4#3$}%
  \sbox\tw@{$\m@th\|$}%
  \mathopen{\hbox to\wd\tw@{\hss\vrule height \ht\z@ depth \dp\z@ width .3\wd\tw@\hss}}%
  #4
  \mathclose{\hbox to\wd\tw@{\hss\vrule height \ht\z@ depth \dp\z@ width .3\wd\tw@\hss}}%
}

\newtheorem{theo}{Theorem}[section]

\newtheorem{lemma}[theo]{Lemma}
\newtheorem{propo}[theo]{Proposition}
\newtheorem{corollary}[theo]{Corollary}

\theoremstyle{definition}

\newtheorem{defi}[theo]{Definition}

\newtheorem{remark}[theo]{Remark}

\newenvironment{mproof}[1]{\noindent \textit{Proof of {#1}.}}{\hfill \qed}

\title{Walk-on-Spheres Monte Carlo and deep neural network approximations of elliptic PDEs with drift and killing}
\author{Konrad Kleinberg$^{1}$, Thomas Kruse$^{2}$ \bigskip \\
\small{$^1$ Department of Mathematics \& Informatics, University of Wuppertal,}
\vspace{-0.1cm}\\
\small{Germany; e-mail: \texttt{kleinberg}\textcircled{\texttt{a}}\texttt{uni-wuppertal.de}}\smallskip\\
\small{$^2$ Department of Mathematics \& Informatics, University of Wuppertal,}
\vspace{-0.1cm}\\
\small{Germany; e-mail: \texttt{tkruse}\textcircled{\texttt{a}}\texttt{uni-wuppertal.de}}
}

\begin{document}

\maketitle

\begin{abstract}
    In this paper we provide Monte Carlo and deep neural network approximations for stochastic representations of solutions to linear elliptic partial differential equations with constant diffusion, drift and killing. Building on the modified Walk-on-Spheres algorithm of Beznea et al.\ (arXiv:2209.01432), we introduce Monte Carlo estimators that explicitly incorporate sampled random times arising in the analyzed stochastic representations. We establish uniform error bounds for these estimators and show that, under suitable assumptions, a prescribed approximation accuracy is achieved with sample complexities growing at most polynomially in both the inverse accuracy and the problem dimension. Furthermore, we prove a deep neural network approximation result for the stochastic representations. Assuming suitable neural network representations of the boundary data and the distance function to the boundary, we use the constructed Monte Carlo to design deep neural networks that approximate the representation uniformly with a number of parameters growing at most polynomially in the inverse accuracy and the problem dimension. These results extend previous complexity analyses to a broader class of elliptic equations involving drift and killing.
\end{abstract}

\section{Introduction \& Problem formulation}
In this work, we consider the approximation of functions $w \colon D \to \R$, 
\begin{equation} \label{eq:expectation}
    w\pr{x} = \E \br*{ h_0\pr{\tau^x, x + B_{\tau^x}} } + \E \br*{ \int_0^{\tau^x} h_1 \pr{s, x + B_s}  \dx s}, \qquad x \in D,
\end{equation}
where $d \in \N = \cu{1,2, \dots}$, $D \subseteq \R^d$ is open, bounded, and convex, $h_0, h_1 \colon [0, \infty) \times \ol{D} \to \R $ are bounded and Lipschitz continuous, $\pr{\Omega, \mathcal{F}, \mathbb{P}}$ is a probability space on which all appearing random variables are defined, $B = (B_t)_{t \in [0, \infty)} \colon \Omega \times [0, \infty) \to \R^d$ is a $d$-dimensional standard Brownian motion starting in $0$, and $\tau^x$, $x \in D$, is the first exit time of the process $x + B$ from $D$, i.e.,\ $\tau^x = \inf\cu{t \in [0, \infty) \colon x + B_t \in \partial D}$, $x \in D$.
The approximation of expectations of the form \eqref{eq:expectation} is particularly relevant in the numerical solution of partial differential equations (PDEs). Indeed, such representations appear naturally 
in stochastic representations of solutions of linear elliptic drift-diffusion PDEs on bounded domains of the following type:
\begin{equation}\label{eq:elliptic_pde}
\begin{split}
    \frac{1}{2}\tr(\sigma \sigma^\top D^2 u)+\langle b, \nabla u\rangle -ru +f &=0 \qquad \text{on } D, \\
    u\big |_{\partial D}&=g \qquad \text{on } \partial D,
\end{split}
\end{equation}
where
$u \colon \ol{D} \to \R$ is the unknown, $\sigma \in \R^{d\times d}$, $b \in \R^d$, $r \in [0, \infty)$, $f \colon D \to \R$, and $g\colon \partial D \to \R$ (see \cref{sec:representation} for a precise derivation of the relation between \eqref{eq:expectation} and \eqref{eq:elliptic_pde}). 
The goal of this work is to develop Monte Carlo schemes with error and complexity analyses as well as artificial neural network (ANN) approximations for expectations of the form \eqref{eq:expectation}, with a particular focus on high-dimensional situations.

Probabilistic approximation methods for elliptic boundary problems include in particular the so called Walk-on-Spheres algorithm, originally proposed in \cite{muller1956some} to approximate the solution of the Dirichlet problem (i.e., the special case of \eqref{eq:elliptic_pde} with $\sigma = \sqrt{2}\Id_{\R^d}$, $b = 0$, $r = 0$, $f = 0$) given by
\begin{equation} \label{eq:dirichlet}
    \begin{split}
        \Delta u &= 0 \qquad \text{on } D, \\
        u\big|_{\partial D} &= g \qquad \text{on } \partial D,
    \end{split}
\end{equation}
via Monte Carlo simulation. In particular, it allows to
efficiently approximate the exit point of Brownian motion from a bounded domain.
Conceptually the Walk-on-Spheres algorithm works as follows: At the current position $x \in D$ determine the largest sphere around $x$ contained in $D$. To obtain the next position, sample uniformly from this sphere.
The key observation here is that this new random position is indeed identically distributed to the location of first exit of a Brownian motion started in $x$ from said sphere.
To obtain an approximation for the exit point of Brownian motion from the entire domain, this procedure is repeated until the process is within a prescribed distance to the boundary of the domain. In particular, the number of steps required to reach that location is in general random.
In this original algorithm the spheres from which the next point of the process is sampled exhibit maximal radii, i.e., the radius of the sphere to be sampled from is given by the distance of the current position to the boundary of the domain.
Subsequent developments based on this algorithm cover methods for more general elliptic boundary value problems (see, e.g., \cite{sabelfeld1995integral,booth1981exact,kyprianou2018unbiased}).

Recently, there is also a growing interest in Monte Carlo approximations of elliptic PDEs that include a drift component. Indeed, in the work \cite{Sabelfeld2016random} the Walk-on-Spheres algorithm is extended to elliptic drift-diffusion equations by utilizing a von Mises-Fisher exit distribution on the successive spheres.
The article \cite{sabelfeld2020anewglobal} introduces a Monte Carlo method for self-adjoint elliptic PDEs with constant coefficients that estimates the solutions and its derivatives at multiple points from one ensemble of trajectories.
In \cite{sabelfeld2022global}, the authors extend this global approach to elliptic drift-diffusion equations.

To obtain global approximations for the drift-free Poisson problem (i.e., the special case of \eqref{eq:elliptic_pde} with $\sigma = \Id_{\R^{d}}$, $b = 0$, $r = 0$)
\begin{equation}
    \begin{split}\label{eq:poisson}
        \frac{1}{2} \Delta u + f &= 0 \qquad \text{on } D, \\
        u\big|_{\partial D} &= g \qquad \text{on } \partial D,
    \end{split}
\end{equation}
the works \cite{grohs2022deepelliptic} and \cite{beznea2022monte} combine Walk-on-Spheres based Monte Carlo estimators with suitable neural networks representations and approximations. In particular, under suitable assumptions these works 
construct ANNs that approximate the solution of \eqref{eq:poisson} with a complexity that only grows polynomially in both the inverse of the approximation accuracy\footnote{The approximation error is measured in $L^2(D)$ in \cite{grohs2022deepelliptic} and in $L^\infty(D)$in \cite{beznea2022monte}.} and the problem dimension, thereby mitigating the so-called curse of dimensionality. The general strategy of combining compositional and approximation properties of ANNs with Monte Carlo methods to overcome the curse of dimensionality was first developed for parabolic linear PDEs  (see, e.g., \cite{JentzenSalimovaWelti2018,BernerGrohsJentzen2018, GrohsWurstemberger2018}). It was subsequently extended to nonlinear equations (see, e.g., \cite{hutzenthaler2019proof, cioica2022deep, neufeld2023deep, ackermann2023deep,ackermann2024deep, neufeld2024rectified, neufeld2024multilevel})\footnote{See also \cite{beck2023overviewdeep} for a survey on deep-learning based approximation methods for PDEs.}. Moreover, this approach was used, for example, in the context of two-player zero-sum stochastic differential games (see, e.g., \cite{reisinger2020rectified}) and Markov decision processes (see, e.g., \cite{beck2025nonlinear} and \cite{jentzen2025dnnControl}).

Two features of the framework developed in \cite{beznea2022monte} are particularly important for our work.
First, it employs
a modified version of the Walk-on-Spheres process 
utilizing a specific class of modified distance functions as a substitute for the true distance function in order to determine the radii of the successive spheres from which the successive positions are sampled. 
Second, the framework admits a deterministic stopping criterion, independent of the starting point of the Walk-on-Spheres process, in place of the random number of steps used in the classical Walk-on-Spheres algorithm.
Both, the deterministic stopping rule and the modified distance functions play an important role in the construction of the ANN approximations.
Furthermore, 
the authors of \cite{beznea2022monte} work under exterior ball condition on $D$ rather than assuming convexity of $D$.

Building on this framework, we analyze in the present work Monte Carlo estimators for expectations of the form \eqref{eq:expectation} and consequently of solutions of elliptic drift-diffusion equations of the form \eqref{eq:elliptic_pde}. 
More precisely, we consider Monte Carlo estimators
$\bar{w} \colon D \times \Omega \to \R$ satisfying for $x \in D$ that
\begin{equation}
    \begin{split} \label{eq:estimator_intro}
        \bar{w}\pr{x} &= \frac{1}{n} \sum_{i = 1}^{n} \bigg[ h_0\pr{\bar{\tau}^{x, i}_M, \bar{X}^{x, i}_M} + \frac{1}{d} \SmallSum{k = 1}{M} \mdist\pr{ \bar{X}^{x, i}_{k - 1} }^2 h_1 \pr*{ \bar{\tau}^{x, i}_{k - 1} + \mdist\pr{ \bar{X}^{x, i}_{k - 1} }^2 V_k^{i}, \bar{X}^{x, i}_{k - 1} + \mdist\pr{ \bar{X}^{x, i}_{k - 1} } Y_k^{i} } \bigg].
    \end{split}
\end{equation}
Here, the number $n \in \N$ denotes the number of sample paths of the Walk-on-Spheres process,
the number $M \in \N$ denotes the deterministic number of steps the employed Walk-on-Spheres process undertakes,
the function $\mdist \colon \ol{D} \to \R$ determines the radii of the successive spheres in the generation of the Walk-on-Spheres process and is to be understood as a suitable substitute for the true distance to the boundary of $D$.
For the sample with index $i \in \cu{1,\dots, n}$ the random variables $\bar{X}_0^{x, i}, \dots, \bar{X}_{M}^{x, i}$ denote the $M$ steps of the Walk-on-Spheres process with start in $\bar{X}^{x, i}_{0} = x \in D$. 
The random variable $\bar{\tau}^{x, i}_{k}$ correspond to the physical time a Brownian motion started in the position $\bar{X}^{x, i}_{k - 1}$ needs to hit the boundary of the sphere with radius $\mdist \pr{\bar{X}^{x, i}_{k - 1}}$ around $\bar{X}^{x, i}_{k - 1}$.
The random variable $(V_{k}^{i}, Y_{k}^{i})$ correspond to the time and space occupation of a Brownian motion with start in $0$ inside the unit sphere.

The estimator in \eqref{eq:estimator_intro} extends the estimator introduced in \cite{beznea2022monte} by incorporating the sampled random times $\bar \tau^{x,i}_M$, $x\in D$, $M\in \N$, $i\in \{1,\ldots,n\}$. This additional time dependency appears in stochastic representations of solutions of \eqref{eq:elliptic_pde} as a compensation for the drift $b$ and killing $r$ components (see \cref{prop:girsanov_feynman_kac,rem:FK_implies_GFK} and below).

We next briefly describe the two main contributions of this article. To characterize appearing complexities (e.g., sample complexity, number of parameters describing an ANN), we adopt the following terminology: A real number $c \in \R$ is called an absolute constant, if $c$ does not depend on any variables or parameters in the problem. Moreover, for quantities $Q, q_1, \dots, q_n \in \R$, where $n \in \N$, we say that $Q$ depends only on $q_1, \dots, q_n$ if $Q$ is an explicit function of the variables $q_1, \dots, q_n$ and $Q$ has no other dependencies. In this case, this dependency is at most polynomial if there exists an absolute constant $c \in [0, \infty)$ such that $Q \le c (1 + \abs{q_1} + \dots + \abs{q_n})^c$.

First, we establish an error analysis for Monte Carlo approximations \eqref{eq:estimator_intro} for expectations of the form \eqref{eq:expectation} in the $L^{\infty}$-sense (see \cref{thm:exp_bound} and \cref{cor:l_infty} below). 
In particular, we demonstrate that for a prescribed approximation accuracy $\eps \in \pr{0, 1}$ the estimator \eqref{eq:estimator_intro} achieves $\E \br{\sup_{x \in D} \abs{w(x) - \bar{w}(x)} } \le \eps$ under suitable assumptions on the modified distance function $\mdist$ as well as with suitable choices of $n, M\in \N$.
More specifically, $n$ and $M$ can be chosen such that $\max \cu{n, M} \le c  \eps^{-q}$, where $q \in [0, \infty)$ is an absolute constant and $c \in [0, \infty)$ is a constant that depends only on $\sup_{(t, x) \in [0, \infty) \times \ol{D}} \abs{h_i(t,x)}$, $i \in \cu{1,2}$, the Lipschitz constants of $h_i$, $i \in \cu{1,2}$, the Lipschitz constant as well as a nondegeneracy constant\footnote{For details, see \cref{def:beta_eps_distance} and \cref{cor:l_infty} below} of the modified distance $\mdist$, the diameter $\diam \pr{D}$ of the domain, and the dimension $d$ and this dependency is at most polynomial.
In particular, imposing that these problem features grow at most polynomially with respect to the dimension $d$ the Monte Carlo scheme indeed overcomes the curse of dimensionality.

Second, we establish in \cref{thm:ANN} an ANN approximation result for functions $w \colon D \to \R$, $w \pr{x} = \E \br{ h_0 \pr{\tau^{x}, x + B_{\tau^{x}}}}$ (i.e., \eqref{eq:expectation} in the special case $h_1 \equiv 0$). 
In particular,  under the assumption that $h_0$ can be represented by an ANN and the distance to the boundary $\dist(\cdot, \partial D)$ can be suitably approximated by an ANN then $w$ can be approximated in the $L^{\infty}$-sense with accuracy $\eps \in \pr{0,1}$ by an ANN $\Psi$ such that the number of parameters of $\Psi$ is bounded by $\bar{c} \eps^{-\bar{q}}$, where $\bar{q} \in [0, \infty)$ is an absolute constant and $\bar{c} \in [0, \infty)$ depends only on the number of parameters, the Lipschitz constant, and the least upper bound on $[0, \infty) \times \ol{D}$ of the ANN representing $h_0$, the number of parameters and a uniform Lipschitz constant of the ANNs approximating the distance to the boundary, $\diam \pr{D}$, and $d$ and this dependendy is at most polynomial. In particular, under similar assumptions as for the Monte Carlo estimator, a curse of dimensionality relief is obtained (see \cref{rem:ANN_discussion} below).

The remainder of this article is structured as follows.
In \cref{sec:representation} below we establish the connection between functions of the form \eqref{eq:expectation} and solutions of the PDE \eqref{eq:elliptic_pde}.
In \cref{sec:modified_wos} we introduce the modified Walk-on-Spheres process and some of its important distributional properties and we present a Walk-on-Spheres-based representation of the expectation \eqref{eq:expectation}.
In \cref{sec:error_mc} we present the Monte Carlo estimator based on the Walk-on-Spheres process established in \cref{sec:modified_wos} and provide an error analysis.
In \cref{sec:ANN} we derive ANN approximation results for expectations of the form \eqref{eq:expectation} utilizing the error analysis established in \cref{sec:error_mc}.

\section{A Girsanov-based Feynman-Kac representation} \label{sec:representation}

In this section, we establish the connection between expectations of the form \eqref{eq:expectation} and classical solutions of the PDE \eqref{eq:elliptic_pde}
via suitable stochastic representations.

Unless otherwise specified, let $ d \in \N $,
$ \sigma \in \R^{d \times d} $, $ b \in \R^d $, $ r \in \R $,
$ D \subseteq \R^d $,
let $ f \colon \ol{D} \to \R $, $ g \colon \partial D \to \R $.
In the following remark, we recall sufficient conditions to establish existence and uniqueness of a classical solution of the PDE \eqref{eq:elliptic_pde} (see, e.g., \cite[Chapter~6, Theorem~6.13]{gilbarg2001elliptic}, \cite[Chapter~6, Theorem~2.4]{friedman1975sdeVol1}).
\begin{remark}
    Assume $\sigma$ is invertible, assume $r \ge 0$,
    assume that $D \subseteq \R^d$ is open, connected, bounded, and satisfies an exterior ball condition at every boundary point,
    assume that $f$ is uniformly $\alpha$-Hölder continuous, for some $\alpha \in \pr{0,1}$,
    and assume that $g$ is continuous on $\partial D$.
    Then there exists a unique $u \in C^0\pr{\ol{D}} \cap C^2\pr{D}$ that satisfies $\eqref{eq:elliptic_pde}$.
\end{remark}

The next \cref{prop:girsanov_feynman_kac} gives rise to a stochastic representation of the solution of \eqref{eq:elliptic_pde} that is the base for the Monte Carlo estimates employed in subsequent section.
We fix some more notation. Unless otherwise specified, 
let $(\Omega, \cF, \mathbb{P})$ be a probability space,
let $B\colon [0,\infty)\times \Omega \to \R^d$ be a standard Brownian motion,
and let $(\cF_t)_{t\in [0,\infty)}$ be the completed filtration generated by $B$. 
For every $x\in \ol{D}$ let 
\begin{equation}
    \tau^x=\inf\{t\in [0,\infty) \colon x+ \sigma B_t\in \partial D\}.
\end{equation}

\begin{propo}\label{prop:girsanov_feynman_kac}
    Assume that $D$ is open, bounded, and connected, that $f$ and $g$ are continuous,
    that $\sigma$ is invertible, and that $r\ge -\frac{1}{2}\|\sigma^{-1}b\|^2$.
    Let $u\in C^0\pr{\overline D} \cap C^2 \pr{D}$ be a solution of \eqref{eq:elliptic_pde}. Then it holds for all $x\in D$ that
    \begin{multline}\label{eq:girsanov_feynman_kac}
    u(x)=\E\biggl[e^{-(r+\frac{1}{2}\|\sigma^{-1}b\|^2)\tau^x+\langle \sigma^{-1}b,B_{\tau^x}\rangle}g(x+\sigma B_{\tau^x})\\+\int_0^{\tau^x}e^{-(r+\frac{1}{2}\|\sigma^{-1}b\|^2)s+\langle \sigma^{-1}b,B_{s}\rangle}f(x+\sigma B_s) \dx s\biggr].
\end{multline}
\end{propo}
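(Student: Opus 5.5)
The plan is to apply Itô's formula to the process $Z_t\,u(x+\sigma B_t)$, where
\[
\theta:=\sigma^{-1}b,\qquad \lambda:=r+\tfrac12\|\theta\|^2\ge 0,\qquad Z_t:=e^{-\lambda t+\langle\theta,B_t\rangle}.
\]
The hypothesis $r\ge-\frac12\|\sigma^{-1}b\|^2$ is exactly the statement $\lambda\ge0$. We then localize, take expectations, and pass to the limit by dominated convergence.

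\textbf{Itô computation.} We have
\[
dZ_t=Z_t\bigl(-\lambda\,dt+\langle\theta,dB_t\rangle+\tfrac12\|\theta\|^2dt\bigr)=Z_t\bigl(-r\,dt+\langle\theta,dB_t\rangle\bigr).
\]
On the interior, with $X_t:=x+\sigma B_t$, Itô's formula gives
\[
du(X_t)=\langle\sigma^\top\nabla u(X_t),dB_t\rangle+\tfrac12\tr(\sigma\sigma^\top D^2u)(X_t)\,dt.
\]
The covariation term is $Z_t\langle\theta,\sigma^\top\nabla u(X_t)\rangle\,dt=Z_t\langle\sigma\theta,\nabla u(X_t)\rangle\,dt=Z_t\langle b,\nabla u(X_t)\rangle\,dt$. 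Summing the three contributions and using the PDE \eqref{eq:elliptic_pde} yields
\[
d\bigl(Z_tu(X_t)\bigr)=-Z_tf(X_t)\,dt+dM_t,
\]
where $M$ is a local martingale. This is where the specific exponent $-(r+\frac12\|\sigma^{-1}b\|^2)s+\langle\sigma^{-1}b,B_s\rangle$ shows its purpose: it exactly absorbs the drift and killing terms.

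\textbf{Localization.} Since $u$ is only $C^2$ in the open set $D$, we localize. Let $D_n:=\{y\in D\colon \dist(y,\partial D)>1/n\}$, and let $\tau_n$ be the exit time of $X$ from $D_n$, where we may assume $x\in D_n$. For $N\in\N$, on $[0,\tau_n\wedge N]$ the functions $u$, $\nabla u$, $D^2u$ are bounded on $\ol{D_n}$ and $Z$ is bounded, so the stopped $M$ is a true martingale. Hence
\[
u(x)=\E\Bigl[Z_{\tau_n\wedge N}\,u(X_{\tau_n\wedge N})+\int_0^{\tau_n\wedge N}Z_sf(X_s)\,ds\Bigr].
\]

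\textbf{Passage to the limit (main obstacle).} We need to let $n,N\to\infty$, and the issue is integrability of the exponential weight $Z$. The key observation is that for $t\le\tau^x$ we have $X_t\in\ol D$, which is bounded. Therefore
\[
|B_t|=|\sigma^{-1}(X_t-x)|\le\|\sigma^{-1}\|\diam(D)=:K,
\]
and, using $\lambda\ge0$,
\[
0<Z_t\le e^{\|\theta\|K}\qquad\text{for all } t\le\tau^x.
\]
So $Z$ is uniformly bounded up to the exit time. Moreover $\tau^x\le$ the exit time of $\sigma B$ from a ball, which has finite expectation, so $\E[\tau^x]<\infty$ and in particular $\tau^x<\infty$ a.s. By continuity of paths, $\tau_n\uparrow\tau^x$. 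Since $u\in C^0(\ol D)$ and $u=g$ on $\partial D$, we get $u(X_{\tau_n\wedge N})\to g(X_{\tau^x})$ almost surely as $n,N\to\infty$.

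The first term is dominated by $e^{\|\theta\|K}\sup_{\ol D}|u|$, and the integral term is dominated by $e^{\|\theta\|K}\sup_{\ol D}|f|\,\tau^x$, which is integrable. Dominated convergence therefore gives \eqref{eq:girsanov_feynman_kac}.

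Alternatively, one could pass through a Girsanov change of measure, under which $X$ has drift $b$. The direct boundedness argument above avoids having to establish exponential moments of the exit time, which would be needed in that approach when $r<0$.
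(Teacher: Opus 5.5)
Your proof is correct and takes the same route as the paper: Itô's product rule for $e^{-(r+\frac12\|\sigma^{-1}b\|^2)t+\langle\sigma^{-1}b,B_t\rangle}u(x+\sigma B_t)$, the PDE to cancel the drift and killing terms, localization, and dominated convergence. Your localization via exit times from the interior sets $D_n$ and your explicit dominating bound fill in two details the paper leaves implicit: that $u$ is only $C^2$ in the open set $D$, and what justifies the dominated convergence step. The bound is that, up to $\tau^x$, the weight is at most $e^{\|\sigma^{-1}b\|\,\|\sigma^{-1}\|\diam(D)}$, since $r+\frac12\|\sigma^{-1}b\|^2\ge 0$, combined with $\E[\tau^x]<\infty$.
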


\begin{proof}
    Let $x\in D$. Note that It\^o's formula implies for all $t\in [0,\infty)$ that
    \begin{equation}
        \begin{split}
            \hspace{-0.28cm} \dx e^{ - \pr{r + \frac{1}{2} \norm{\sigma^{-1}b}^2 } (t \wedge \tau^x) + \langle \sigma^{-1}b, B_{t \wedge \tau^x}  \rangle } = \ind_{\cu{t < \tau^x}} e^{ - \pr{r + \frac{1}{2} \norm{\sigma^{-1}b}^2 }t + \langle \sigma^{-1}b, B_{t}  \rangle } \pr*{-r \dx t + \langle \sigma^{-1}b, \dx B_t \rangle}
        \end{split}
    \end{equation}
    and
    \begin{equation}
        \begin{split}
            \dx u\pr{x + \sigma B_{t \wedge \tau^x}} &= \ind_{\cu{t < \tau^x}} \pr[\big]{ \langle \nabla u(x+\sigma B_{t}),\sigma \dx B_t\rangle + \frac{1}{2}\tr(\sigma \sigma^\top D^2u(x+\sigma B_{t})) \dx t }.
        \end{split}
    \end{equation}
    Combining this, the fact that for all $t \in [0, \infty)$ it holds that $B_{t \wedge \tau^x} \ind_{\cu{t < \tau^x}} =  B_t \ind_{\cu{t < \tau^x}}$, and the fact that $u$ solves \eqref{eq:elliptic_pde} with the product rule entails for all $t \in [0, \infty)$ that
    \begin{equation}
        \begin{split}
            &\dx \pr*{ e^{ - \pr{r + \frac{1}{2} \norm{\sigma^{-1}b}^2} \pr{t \wedge \tau^x} + \langle \sigma^{-1} b, B_{t \wedge \tau^x} \rangle  } u \pr{ x + \sigma B_{t\wedge \tau^x} } }\\
            &= \ind_{\cu{t < \tau^x}} e^{ - \pr{r + \frac{1}{2} \norm{\sigma^{-1}b}^2 }t + \langle \sigma^{-1}b, B_{t}  \rangle } \Big( \langle \nabla u \pr{x + \sigma B_t}, \sigma \dx B_t \rangle - f\pr{x + \sigma B_t} \dx t\\
            &\quad- \langle b, \nabla u \pr{x + \sigma B_t} \rangle \dx t + ru \pr{x + \sigma B_t} \dx t - r u\pr{x + \sigma B_t} \dx t + \langle b, \nabla u \pr{x + \sigma B_t }\rangle \dx t  \\
            &\quad + u \pr{x + \sigma B_t} \langle \sigma^{-1}b, \dx B_t \rangle \Big) \\
            &= \ind_{\cu{t < \tau^x}} e^{ - \pr{r + \frac{1}{2} \norm{\sigma^{-1}b}^2 }t + \langle \sigma^{-1}b, B_{t}  \rangle } \Big( \langle \sigma^\top \nabla u \pr{x + \sigma B_t} + \sigma^{-1} b u \pr{x + \sigma B_t}, \dx B_t \rangle  \\
            &\quad- f\pr{x + \sigma B_t} \dx t  \Big).
        \end{split}
    \end{equation}
    Written as integrals it holds for all $t \in [0, \infty)$ that
    \begin{equation}
        \begin{split}
            &e^{ - \pr{r + \frac{1}{2} \norm{\sigma^{-1}b}^2} \pr{t \wedge \tau^x} + \langle \sigma^{-1} b, B_{t \wedge \tau^x} \rangle  } u \pr{ x + \sigma B_{t\wedge \tau^x} } - u(x) \\
            &= -\int_0^{t \wedge \tau^x} e^{ - \pr{r + \frac{1}{2} \norm{\sigma^{-1}b}^2} s + \langle \sigma^{-1}b, B_s \rangle } f(x + \sigma B_s) \dx s \\
            &\quad+ \int_0^{t \wedge \tau^x} e^{ - \pr{r + \frac{1}{2} \norm{\sigma^{-1}b}^2} s + \langle \sigma^{-1}b, B_s \rangle } \langle u\pr{x + \sigma B_s} \sigma^{-1} b + \sigma^{\top} \nabla u \pr{x + \sigma B_{s}}, \dx B_s \rangle.
        \end{split}
    \end{equation}
    Next, let $\tau_k$, $k\in \N$, be a localizing sequence of stopping times for the local martingale $\int_{0}^{\cdot \wedge \tau^x} e^{-(r+\frac{1}{2}\|\sigma^{-1}b\|^2)t+\langle \sigma^{-1}b,B_{t}\rangle}\langle u(x+\sigma B_{t}) \sigma^{-1}b+\sigma^\top\nabla u(x+\sigma B_{t}), \dx B_{t}\rangle$.
    This ensures for all $t \in [0, \infty)$, $k\in \N$ that
    \begin{multline}
        u(x) = \E\biggl[ e^{-(r+\frac{1}{2}\|\sigma^{-1}b\|^2)(t \wedge \tau_k \wedge \tau^x)+\langle \sigma^{-1}b,B_{t \wedge \tau_k \wedge \tau^x}\rangle} u(x+\sigma B_{t \wedge \tau_k \wedge \tau^x})\\
        +\int_0^{t \wedge \tau_k \wedge \tau^x} \hspace{-1cm} e^{-(r+\frac{1}{2}\|\sigma^{-1}b\|^2)s+\langle \sigma^{-1}b,B_{s}\rangle} f(x+\sigma B_{s}) \dx s \biggr].
    \end{multline}
    This, the dominated convergence theorem, and the fact that  $u\big |_{\partial D}=g$ on $\partial D$ demonstrate that
    \begin{equation}
        \begin{split}
            u(x)&=\lim_{\substack{t \to \infty \\ k\to \infty}} \E\biggl[ e^{-(r+\frac{1}{2}\|\sigma^{-1}b\|^2)(t \wedge \tau_k \wedge \tau^x)+\langle \sigma^{-1}b,B_{t \wedge \tau_k \wedge \tau^x}\rangle} u(x+\sigma B_{t \wedge\tau_k \wedge \tau^x})\\
            &\quad +\int_0^{t \wedge \tau_k \wedge \tau^x} \hspace{-1cm} e^{-(r+\frac{1}{2}\|\sigma^{-1}b\|^2)s+\langle \sigma^{-1}b,B_{s}\rangle} f(x+\sigma B_{s})\dx s \biggr]\\
            &=\E\biggl[ e^{-(r+\frac{1}{2}\|\sigma^{-1}b\|^2) \tau^x+\langle \sigma^{-1}b,B_{\tau^x}\rangle} u(x+\sigma B_{\tau^x})\\
            &\quad +\int_0^{\tau^x}e^{-(r+\frac{1}{2}\|\sigma^{-1}b\|^2)s+\langle \sigma^{-1}b,B_{s}\rangle} f(x+\sigma B_{s}) \dx s \biggr] \\
            &=\E\biggl[ e^{-(r+\frac{1}{2}\|\sigma^{-1}b\|^2) \tau^x+\langle \sigma^{-1}b,B_{\tau^x}\rangle} g(x+\sigma B_{\tau^x})\\
            &\quad +\int_0^{\tau^x}e^{-(r+\frac{1}{2}\|\sigma^{-1}b\|^2)s+\langle \sigma^{-1}b,B_{s}\rangle} f(x+\sigma B_{s}) \dx s \biggr].  
        \end{split}
    \end{equation}
    This completes the proof.
\end{proof}

In the following remark, we informally derive another stochastic representation of the solution $u$ of \eqref{eq:elliptic_pde} based on the Feynman-Kac representation and establish a connection to the representation \eqref{eq:girsanov_feynman_kac} as well as reason why the representation \eqref{eq:girsanov_feynman_kac} is preferable for our work.

\begin{remark} \label{rem:FK_implies_GFK}
    Let $W\colon [0,\infty)\times \Omega \to \R^d$ be a standard Brownian motion on $\Omega$ under the probability measure $Q$. For every $x\in \R^d$ let $X^x\colon [0,\infty)\times \Omega \to \R^d$ and $\tilde \tau^x\colon \Omega \to [0,\infty)$ satisfy $X^x_t =x+bt+\sigma W_t$, $t\in [0,\infty)$, and $\tilde{\tau}^{x} =\inf\{t\in [0,\infty)|X^x_t\in \partial D\}$.
    Then the Feynman-Kac formula yields under appropriate assumptions for every $x\in \R^d$ that
    \begin{equation}\label{eq:feynman_kac}
        u(x)=E^{Q}\left[e^{-r\tilde \tau^x}g(X_{\tilde \tau^x}^x)+\int_0^{\tilde \tau^x}e^{-rs}f(X_s^x) \dx s\right].
    \end{equation}
    In \cref{sec:modified_wos} and following, we employ the Walk-on-Spheres algorithm which is based on the isotropy of Brownian motion.
    For this reason, representation \eqref{eq:girsanov_feynman_kac}, which employs exit times of $x+\sigma B$, is more convenient than \eqref{eq:feynman_kac} which is based on the non-isotropic process $X^x$.

    Note that \eqref{eq:girsanov_feynman_kac} can be derived from \eqref{eq:feynman_kac} via Girsanov's theorem (see, e.g., \cite[Appendix~A.1]{sawhney2022gridfree} for a similar derivation).
    To this end, let $ K \colon [0,\infty)\times \Omega \to \R^d$, $L\colon [0,\infty)\times \Omega \to \R$, and $\tau_K^x \colon \Omega \to \br{0,\infty}$, $x \in \R^d$, satisfy 
    \begin{equation}
        \begin{split}
            K_t &= W_t+\sigma^{-1}b t, \qquad t\in [0,\infty),\\
            L_t &= e^{-\langle \sigma^{-1}b,W_t\rangle-\frac{1}{2}\|\sigma^{-1}b\|^2 t},  \qquad t\in [0,\infty),\\
            \tau_{K}^{x} &= \inf \cu{ t \in [0, \infty) \colon x + \sigma K_t \in \partial D}, \qquad x \in \R^d.
        \end{split}
    \end{equation}
    Note that for all $t\in [0,\infty)$, $x\in \R^d$ we have $X_{t}^{x} = x + \sigma K_{t}$, $L_t = e^{-\langle \sigma^{-1}b,K_t\rangle+\frac{1}{2}\|\sigma^{-1}b\|^2 t}$, and $\tilde \tau^x=\tau_{K}^{x}$.
    Then, Girsanov's theorem (see, e.g., \cite[Section~3.5, Theorem~5.1]{karatzas1991brownian}) implies for every $T\in [0,\infty)$ that $K$ is a Brownian motion on $(\Omega, \cF_T, (\cF_t)_{t\in [0,T]},P_T)$ with $\frac{ \dx P_T}{\dx Q}=L_T$. 
    By a suitable limiting procedure we obtain a measure $P_\infty$ such that $K$ is a Brownian motion under $P_\infty$ and it follows from \eqref{eq:feynman_kac} that for all $x\in \R^d$ it holds that
    \begin{multline}\label{eq:feynman_kac2}
        u(x)=E^{P_\infty}\biggl[e^{-(r+\frac{1}{2}\|\sigma^{-1}b\|^2)\tau_{K}^{x}+\langle \sigma^{-1}b,K_{\tau_{K}^{x}}\rangle}g(x+\sigma K_{\tau_{K}^{x}})\\
        +\int_0^{\tau_{K}^{x}}e^{-(r+\frac{1}{2}\|\sigma^{-1}b\|^2)s+\langle \sigma^{-1}b,K_{s}\rangle}f(x+ \sigma K_{s}) \dx s\biggr].
    \end{multline}
    This exactly corresponds to \eqref{eq:girsanov_feynman_kac}.
\end{remark}

\begin{remark}\label{rem:remove_sigma}
    Under the assumption that $\sigma$ is invertible one can transform the elliptic PDE \eqref{eq:elliptic_pde} to one with isotropic Laplacian. Indeed, let $v\in C^2(\sigma^{-1} D)$ satisfy
    \begin{equation}
    \begin{split}
        \frac{1}{2}\Delta v+\langle \sigma^{-1}b, \nabla v\rangle -rv +f(\sigma \cdot) &=0 \qquad \text{on } \sigma^{-1}D \\
    v\big |_{\partial (\sigma^{-1}D)}&=g(\sigma \cdot) \qquad \text{on } \partial (\sigma^{-1} D).
    \end{split}
    \end{equation}
    Then the function $u(x)=v(\sigma^{-1}x)$, $x\in D$, satisfies $u \in C^2(D)$ and \eqref{eq:elliptic_pde}. For this reason we focus attention on the case $\sigma=\Id$ in the remainder of this article.
    A proof for this transformation in the case $b = 0$, $r = 0$ is given in \cite[Lemma~2.1]{beznea2022monte}.
\end{remark}

\begin{remark} \label{rem:connection_pde_expectation}
    In the situation of \cref{prop:girsanov_feynman_kac} with $\sigma = \Id$ let $h_0 \colon [0, \infty) \times \partial D \to \R$, $h_1 \colon [0, \infty) \times \ol{D} \to \R$ satisfy for all $t \in [0, \infty)$, $x \in \partial D$, $y \in \ol{D}$ that
    \begin{equation}
        \begin{split} \label{eq:connection_pde_expectation_h_i}
            h_{0} \pr{t, x} = e^{ - \pr{r + \frac{1}{2} \norm{b}^2 }t + \langle  b, x \rangle } g\pr{x} \qquad \text{and} \qquad h_{1} \pr{t, y} = e^{ - \pr{r + \frac{1}{2} \norm{b}^2 }t + \langle b, y \rangle } f\pr{y}.
        \end{split}
    \end{equation}
   Then it holds for all $x \in D$ that 
    \begin{equation}
        u\pr{x} = e^{- \langle b, x \rangle} \pr*{ \E \br*{ h_0\pr{\tau^{x}, x + B_{\tau^{x}}} } +  \E \br*{ \int_{0}^{\tau^{x}} h_1\pr{s, x + B_s} \dx s  }  }.
    \end{equation}
    Under the additional assumption that $g$ is extendable to $\ol{D}$ and that both $g$ and $f$ are Lipschitz on $\ol{D}$, it holds that $h_0$ as in \eqref{eq:connection_pde_expectation_h_i} is well-defined on $[0, \infty) \times \ol{D}$ and $h_0$ and $h_1$ are bounded and Lipschitz continuous. Hence, the relation between the PDE solution $u$ and the expectation $w$ as in \eqref{eq:expectation} is established as $u(x) = e^{- \langle b, x \rangle} w\pr{x} $ for $x \in D$. We remark that this transformation is also used in \cite[Section 2]{Sabelfeld2016random}.
\end{remark}

\section{A modified Walk-on-Spheres algorithm} \label{sec:modified_wos}

In this section, we introduce the modified Walk-on-Spheres algorithm as proposed in \cite{beznea2022monte} and demonstrate some relevant distributional properties.
In particular, we present in \cref{lem:decay} estimates on the tail probability of the number of steps the Walk-on-Spheres process requires to reach the boundary of the domain within a certain distance.
Moreover, we establish in \cref{lem:WoS_representation_solution} a representation of functions of the form \eqref{eq:expectation} based on the established Walk-on-Spheres process which subsequently motivates the Monte Carlo estimator considered in \cref{sec:error_mc}.

The following lemma demonstrates a consequence of the rotational invariance of Brownian motion.

\begin{lemma}\label{lem:time_exit_BM_independence}
    Let $\tau = \inf\cu{t \in [0, \infty) \colon \norm{B_t} \ge 1}$. Then it holds that $\tau$ and $B_\tau$ are independent.
\end{lemma}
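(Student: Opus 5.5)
We will prove the claim using the rotational invariance of Brownian motion together with the uniqueness of the uniform distribution on the unit sphere $S^{d-1}$. Two standard facts are needed first. Since the radial part $\norm{B}$ is a Bessel process that leaves every bounded set almost surely, $\tau$ is almost surely finite. By continuity of paths, $\norm{B_\tau} = 1$, so $B_\tau$ takes values in $S^{d-1}$. It then suffices to show that for every Borel set $A \subseteq [0,\infty)$ with $\mathbb{P}(\tau \in A) > 0$, the conditional law $\mathbb{P}(B_\tau \in \cdot \mid \tau \in A)$ equals the normalized surface measure $\mu$ on $S^{d-1}$. Since $\mu$ is independent of $A$, this gives $\mathbb{P}(\tau \in A, B_\tau \in C) = \mathbb{P}(\tau \in A)\,\mu(C)$ for all Borel $C \subseteq S^{d-1}$. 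Taking $A = [0,\infty)$ identifies $\mu$ as the law of $B_\tau$, and independence follows.

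For the key step, fix an orthogonal matrix $O \in \R^{d \times d}$. The process $OB$ is again a standard Brownian motion, and $\norm{OB_t} = \norm{B_t}$ for all $t$. Hence the exit time of $OB$ from the unit ball is exactly $\tau$, with the same $\omega$-wise value, and the exit position is $OB_\tau$. The pair $(\tau, B_\tau)$ is a measurable functional $\Phi(B)$ of the path. Since $OB$ and $B$ have the same law on path space, $(\tau, OB_\tau) = \Phi(OB)$ and $(\tau, B_\tau)$ have the same joint law. Consequently, for every $A$ as above, the probability measure $\nu_A := \mathbb{P}(B_\tau \in \cdot \mid \tau \in A)$ on $S^{d-1}$ is invariant under every orthogonal map.

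We then invoke the uniqueness of the rotation-invariant probability measure on $S^{d-1}$: it is the normalized surface measure, the Haar-type uniqueness for the transitive action of $O(d)$. This gives $\nu_A = \mu$ and completes the argument. For $d = 1$ the "sphere" is $\{-1,1\}$ and $O \in \{\pm 1\}$, so the argument reduces to symmetry under the reflection $x \mapsto -x$. In that case $\nu_A$ puts mass $1/2$ on each point, so the same reasoning applies.

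The only step that needs care is the pathwise identity $\Phi(OB) = (\tau, OB_\tau)$ and the measurability of $\Phi$, which is what lets us pass from equality in law of $OB$ and $B$ to equality in law of the pairs. This is routine: $\tau$ is the hitting time of a closed set by a continuous process, and $B_\tau$ is measurable as the evaluation of a continuous process at a random time. The uniqueness of the invariant measure is standard and can be cited. If a self-contained proof were preferred, one could instead average $\nu_A$ over Haar measure on $O(d)$ to show $\nu_A = \mu$ directly.
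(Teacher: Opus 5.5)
Your proof is correct and rests on the same two ingredients as the paper: the identity in law $(\tau, B_\tau) \stackrel{d}{=} (\tau, OB_\tau)$ from rotational invariance of $B$, and uniqueness of the rotation-invariant probability measure on the unit sphere. The difference is in how you localize in $\tau$. The paper takes a regular conditional distribution $\kappa(t,\cdot)$ of $B_\tau$ given $\tau$, argues that $\kappa(t,\cdot)$ is rotation invariant for $\mathbb{P}\circ\tau^{-1}$-a.e.\ $t$, and identifies it with the uniform measure. You instead condition on the events $\{\tau\in A\}$ and apply uniqueness to the single measure $\nu_A$.

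Your route is more elementary in two ways. It needs no existence theorem for regular conditional distributions. It also avoids a step the paper leaves implicit: passing from \emph{for each rotation $R$, for a.e.\ $t$} to \emph{for a.e.\ $t$, for all $R$}. Since there are uncountably many rotations, that step needs a countable generating class for $C$, a countable dense set of rotations, and a continuity argument.

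Using $O(d)$ rather than $SO(d)$ also covers $d=1$. There $SO(1)$ is trivial, so the paper's uniqueness claim does not apply as stated, whereas you handle the case explicitly via the reflection $x\mapsto -x$. The paper's disintegration yields the pointwise statement that the conditional law of $B_\tau$ given $\tau=t$ is uniform. That is equivalent to the independence you prove, so your approach loses nothing.
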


\begin{proof}
    Throughout this proof let $S = \cu{ y \in \R^d \colon \norm{y} = 1} $ and let $\mu \colon \cB\pr{S} \to [0,1]$ denote the uniform distribution on $(S, \cB \pr{S})$.
    First, observe that $\tau$ and $B_\tau$ are measurable path functionals of the path of B.
    Next, note that the fact that for all $R \in SO(d)$ it holds that $B$ and $RB$ are identically distributed
    and the fact that for all $R \in SO(d)$, $x \in \R^d$ it holds that $\norm{Rx} = \norm{x}$ 
    yield for all $R \in SO(d)$ that $(\tau, B_\tau)$ and $(\tau, R B_\tau)$ are identically distributed.
    Moreover, observe that the fact that $\br{0, \infty}$ is a Polish space ensures that there exists a regular conditional distribution $\kappa \colon \br{0, \infty} \times \cB \pr{S} \to \br{0,1}$ of $B_\tau$ given $\tau$ (cf.,\ \cite[Section~8.3]{Klenke2014Probability}). 
    This implies for all $A \in \cB\pr{\br{0, \infty}}$, $C \in \cB\pr{S}$ that
    \begin{equation}
        \begin{split} \label{eq:time_exit_BM_independence_joint_distr}
            \mathbb{P}\br{ \tau \in A, B_\tau \in C } &= \E \br*{ \ind_{\cu{\tau \in A}} \ind_{\cu{B_\tau \in C}}  } = \E \br*{ \ind_{\cu{\tau \in A}} \E \br*{ \ind_{\cu{B_\tau \in C}} \mid \tau } } \\
            &= \int_\Omega \E \br*{ \ind_{\cu{B_\tau \in C}} \mid \tau } \ind_{\cu{\tau \in A}} \dx \mathbb{P} = \int_A  \kappa \pr{t, C} \pr{\mathbb{P} \circ \tau^{-1}} \pr{\dx t}.
        \end{split}
    \end{equation}
    Combining this
    with the fact that for all $R \in SO(d)$ it holds that $(\tau, B_\tau)$ and $(\tau, R B_\tau)$ are identically distributed
    demonstrates for all $A \in \cB\pr{\br{0, \infty}}$, $C \in \cB\pr{S}$, $R \in SO(d)$ that
    \begin{equation}
        \begin{split}
            \int_A \kappa\pr{t, C} \pr{\mathbb{P} \circ \tau^{-1}} \pr{\dx t} &= \mathbb{P}\br{ \tau \in A, B_\tau \in C } = \mathbb{P}\br{ \tau \in A, RB_\tau \in C } \\
            &= \int_A \kappa \pr{t, R^\top C} \pr{\mathbb{P} \circ \tau^{-1}} \pr{\dx t}.
        \end{split}
    \end{equation}
    This implies that for $\mathbb{P} \circ \tau^{-1}$-a.a. $t \in [0, \infty)$ it holds that $\kappa\pr{t, \cdot}$ is a rotational invariant probability measure on $S$.
    This and
    the fact that $\mu$ is the unique rotationally invariant probability measure on $S$ (cf., \cite[Theorem~3.4]{Mattila1995Geometry})
    imply that for $\mathbb{P} \circ \tau^{-1}$-a.a. $t \in [0, \infty)$ it holds that $\kappa\pr{t, \cdot} = \mu$.
    This and \eqref{eq:time_exit_BM_independence_joint_distr} yield for all $A \in \cB\pr{\br{0, \infty}}$, $C \in \cB\pr{S}$ that
    \begin{equation}
        \mathbb{P}\br{ \tau \in A, B_\tau \in C } = \mathbb{P} \br{\tau \in A} \mu\pr{C}.
    \end{equation}
    This shows that $\tau$ and $B_\tau$ are independent.
    The proof of \cref{lem:time_exit_BM_independence} is thus complete.
\end{proof}

\noindent
Throughout the remainder of this section, assume that $D$ is open and bounded. Let $r\colon \R^d \to [0,\infty)$ satisfy for all $x\in \R^d$ that
\begin{equation}
    r(x)=\inf\{\|x-y\| \colon y\in \partial D\}.
\end{equation}

\noindent
The following lemma is a modification of \cite[Lemma~3.2]{grohs2022deepelliptic}. The geometric proof given there can be adapted to yield the following statement. Recall for all $x \in \ol{D}$ that $\tau^{x} = \inf \cu{ t \in [0, \infty) \colon x + B_t \in \partial D }$.

\begin{lemma} \label{lem:expected_exit_time_UB}
    Assume that $D$ is convex. Then it holds for all $x \in D$ that
    \begin{equation}
        \E \br*{\tau^x} \le r\pr{x} \diam \pr{D}.
    \end{equation}
\end{lemma}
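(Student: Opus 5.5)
We prove the bound by comparison with a slab: convexity lets us place $D$ between two parallel hyperplanes, and the exit time from that slab reduces to a one-dimensional exit problem whose expectation is explicit.

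Fix $x \in D$ and set $\rho = r(x)$ and $\delta = \diam(D)$. Since $D$ is bounded, $\partial D$ is compact, so there is a point $z \in \partial D$ with $\|x - z\| = \rho$. Because $D$ is open and convex, there is a supporting hyperplane to $\ol{D}$ at $z$. Concretely, there is a unit vector $\nu \in \R^d$ with $\langle y - z, \nu \rangle < 0$ for all $y \in D$. As $\|x-z\| = \rho$, we get $0 < \langle z - x, \nu \rangle \le \rho$. Define the one-dimensional coordinate $\ell(y) = \langle z - y, \nu \rangle$. Then $\ell > 0$ on $D$ and $\ell(x) \le \rho$. Moreover, for every $y \in \ol{D}$,
\[
\ell(y) \le \ell(x) + \|x - y\| \le \rho + \delta .
\]
Hence $D$ is contained in the slab $S = \cu{ y \in \R^d \colon 0 < \ell(y) < \rho + \delta }$. (If one prefers the width $\delta$ exactly, one can use $\ell(y) \le \delta$ for $y \in \ol{D}$, since $z \in \ol{D}$. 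This gives the slab $\cu{0 < \ell < \delta}$.)

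Next we compare exit times. The process $\beta_t = \ell(x) - \langle B_t, \nu \rangle = \ell(x + B_t)$ is a one-dimensional standard Brownian motion started at $a = \ell(x) \in (0, \rho]$. Since $D \subseteq S$ and $x + B$ is continuous, the path must exit $D$ no later than it exits $S$. Therefore $\tau^x \le \sigma := \inf\cu{t \ge 0 \colon \beta_t \notin (0, L)}$, where we take $L = \delta$, using the refined slab above. For one-dimensional Brownian motion started at $a$, the classical formula gives $\E[\sigma] = a(L - a)$. This follows from optional stopping applied to the martingale $\beta_t^2 - t$ at $t \wedge \sigma$, together with $\E[\sigma] < \infty$ and monotone and dominated convergence. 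We conclude
\[
\E[\tau^x] \le a(\delta - a) \le a\,\delta \le \rho\,\delta = r(x)\diam(D).
\]

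The main care point is the geometric step: we need the supporting hyperplane at the nearest boundary point $z$, and we need $\ell(y) \le \delta$ on $\ol{D}$. The latter holds because $\ell(y) = \langle z - y, \nu\rangle \le \|z - y\| \le \diam(D)$, with $z \in \ol{D}$. The justification of $\E[\sigma] = a(L-a)$ via optional stopping is standard.
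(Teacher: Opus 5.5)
Your argument is correct and is essentially the geometric proof the paper defers to (its adaptation of \cite[Lemma~3.2]{grohs2022deepelliptic}). A supporting hyperplane at a nearest boundary point places $D$ in a slab of width $\diam(D)$, and the one-dimensional exit time $a(\diam(D)-a)$ with $a\le r(x)$ then gives $\E[\tau^x]\le r(x)\diam(D)$. The only unstated detail is that $\ell<\diam(D)$ holds strictly on $D$, so that $D$ lies in the open slab. This follows immediately from the openness of $D$, and it is not essential anyway: you can use a slightly wider slab and let its width decrease to $\diam(D)$.
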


\noindent
The next definition is \cite[Definition~2.7]{beznea2022monte} and describes the class of modified distance functions, which subsequently determines the radii of the successive spheres on which the Walk-on-Spheres process is defined. 

\begin{defi} \label{def:beta_eps_distance}
    Let $\beta \in (0, 1]$, $\eps \in [0,\infty)$,
    let $\tilde{r} \colon D \to [0, \infty)$ be Lipschitz continuous,
    and assume for all $x,y \in D$ with $r\pr{x} \ge \eps$ that $\tilde{r}\pr{y} \le r\pr{y}$ and $\tilde{r}(x) \ge \beta r\pr{x}$. 
    Then $\tilde{r}$ is called $\pr{\beta, \eps}$-distance on $D$.
\end{defi}

\noindent
With this notion of modified distance functions, we now introduce the Walk-on-Spheres process.
Let $\beta \in (0, 1]$, $\delta \in [0,\infty)$, let $\mdist$ be a $\pr{\beta, \delta}$-distance on $D$.
Let $\tau_k^{x}\colon \Omega \to [0,\infty)$, $k\in \N_0$, satisfy for all $x\in \ol{D}$, $k\in \N$ that $\tau_0^{x}=0$
and
\begin{equation}
    \tau^{x}_{k}=\inf\cu{s\in [\tau^x_{k - 1},\infty) \colon \|B_s-B_{\tau^x_{k-1}}\|\ge \mdist(x+B_{\tau^x_{k-1}})},
\end{equation}
let $Z^j_k\colon \Omega \to \R^d$, $j,k\in \N$, be standard normally distributed, let $\xi^j_k\colon \Omega \to \R^d$, $j,k\in \N$, satisfy for all $j,k\in \N$ that $\xi^j_k=\|Z^j_k\|^{-1}Z^j_k$, let $\bar X^{x,j}_k\colon \Omega \to \ol{D}$, $x\in \ol{D}$, $j\in \N$, $k\in \N_0$, satisfy for all $x\in \ol{D}$, $j,k\in \N$ that $\bar X^{x,j}_0=x$ and
\begin{equation}
    \bar X^{x,j}_k=\bar X^{x,j}_{k-1}+\mdist(\bar X^{x,j}_{k-1})\xi^j_k,
\end{equation}
let $\zeta\colon \Omega \to [0,\infty)$ satisfy $\zeta=\inf\{t\in [0,\infty) \colon \|B_t\|\ge 1\}$, let $\rho_k^j$, $k,j\in \N$, be i.i.d.\ copies of $\zeta$, assume that $\rho_k^j$, $Z^j_k$, $j,k\in \N$ are independent, let $\bar \tau_k^{x,j}\colon \Omega \to [0,\infty)$, $j\in \N$, $k\in \N_0$ satisfy for all $x\in \ol{D}$, $j,k\in \N$ that $\bar{\tau}_{0}^{x,j}=0$ and
\begin{equation}
    \bar \tau^{x,j}_{k}=\bar \tau^{x,j}_{k-1} + \mdist^2(\bar X_{k-1}^{x,j})\rho_k^j.
\end{equation}

\begin{lemma} \label{lem:distribution_WoS_BM}
    For all $x\in \ol{D}$, $k\in \N_0$, $j\in \N$ it holds that $(\bar \tau^{x,j}_k,\bar X^{x,j}_k)$ and $(\tau^x_k, x+B_{\tau^x_k})$ are identically distributed.
\end{lemma}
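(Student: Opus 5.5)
We argue by induction on $k$, using the strong Markov property of $B$, Brownian scaling, and \cref{lem:time_exit_BM_independence}. For $k=0$ both pairs equal $(0,x)$ deterministically. For the inductive step, we prove the stronger statement that the Markov chains $(\tau^x_k, x+B_{\tau^x_k})_{k\in\N_0}$ and $(\bar\tau^{x,j}_k,\bar X^{x,j}_k)_{k\in\N_0}$ have the same initial law and the same transition kernel. Equality of the one-dimensional marginals at each $k$ then follows.

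\textbf{Transition kernel of the walk.} We first compute the kernel of the constructed chain. Given $(\bar\tau^{x,j}_{k-1},\bar X^{x,j}_{k-1})=(t,y)$, the next state is
\[
(t+\mdist(y)^2\rho^j_k,\; y+\mdist(y)\xi^j_k).
\]
Here $\rho^j_k$ and $\xi^j_k$ are independent of each other and of $\mathcal{G}_{k-1}=\sigma(\rho^j_i,Z^j_i\colon i\le k-1)$, with respect to which $(\bar\tau^{x,j}_{k-1},\bar X^{x,j}_{k-1})$ is measurable. Hence the kernel is
\[
K((t,y),\cdot)=\mathrm{Law}\bigl(t+\mdist(y)^2\zeta,\; y+\mdist(y)\eta\bigr),
\]
where $\zeta$ and $\eta$ are independent, $\zeta$ is the unit-ball exit time, and $\eta\sim\mu$ is uniform on the sphere. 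Note that $\xi=Z/\|Z\|$ is indeed uniform by rotation invariance of the Gaussian.

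\textbf{Transition kernel of Brownian motion.} Next we compute the kernel for Brownian motion. Each $\tau^x_{k-1}$ is a stopping time: it is the hitting time of a closed set by the continuous process $B-B_{\tau^x_{k-2}}$ with a $\cF_{\tau^x_{k-2}}$-measurable radius. It is also a.s.\ finite, provided $\mdist(x+B_{\tau^x_{k-1}})$ is finite, which holds since $\mdist$ is bounded on the bounded set $\ol D$. By the strong Markov property, $W=B_{\tau^x_{k-1}+\cdot}-B_{\tau^x_{k-1}}$ is a Brownian motion independent of $\cF_{\tau^x_{k-1}}$. Writing $R=\mdist(x+B_{\tau^x_{k-1}})$, which is $\cF_{\tau^x_{k-1}}$-measurable, we get $\tau^x_k-\tau^x_{k-1}=\inf\{s\colon\|W_s\|\ge R\}$ and $B_{\tau^x_k}-B_{\tau^x_{k-1}}=W$ evaluated at that time.

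We then apply Brownian scaling conditionally on $R=\varrho>0$. The process $\varrho^{-1}W_{\varrho^2\cdot}$ is a Brownian motion, so
\[
\bigl(\inf\{s\colon\|W_s\|\ge\varrho\},\,W_{\inf\{\cdots\}}\bigr)\overset{d}{=}(\varrho^2\zeta,\varrho B_\zeta).
\]
By \cref{lem:time_exit_BM_independence}, $\zeta$ and $B_\zeta$ are independent, and $B_\zeta\sim\mu$ by the rotation-invariance argument in its proof. So the conditional law of the increment matches that of the constructed chain. The degenerate case $R=0$ (e.g.\ at boundary points) gives a zero increment in both chains.

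Combining the two computations, the conditional law of $(\tau^x_k,x+B_{\tau^x_k})$ given $\cF_{\tau^x_{k-1}}$ is $K((\tau^x_{k-1},x+B_{\tau^x_{k-1}}),\cdot)$. This is made rigorous through the freezing lemma: for independent $W$ and $\cF_{\tau^x_{k-1}}$-measurable $(t,y)$, $\E[F(t,y,W)\mid\cF_{\tau^x_{k-1}}]=\Phi(t,y)$ with $\Phi(t,y)=\E[F(t,y,W)]$. Induction then gives equality of the laws of the whole chains, and in particular at each fixed $k$.

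The main obstacle is this conditioning step: the radius $R$ is random, and we must apply the scaling and independence statements inside a conditional expectation. This requires checking joint measurability of $(\varrho,w)\mapsto(\text{exit time},\text{exit point})$ on path space so that the freezing lemma applies. I would handle it by fixing bounded measurable test functions $\phi$ and verifying $\E[\phi(\tau^x_k,x+B_{\tau^x_k})]=\E[\int\phi\,\dx K((\tau^x_{k-1},x+B_{\tau^x_{k-1}}),\cdot)]$ directly.
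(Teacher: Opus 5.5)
Your proposal is correct and follows the same route as the paper: induction on $k$, where the one-step law comes from Brownian scaling and the independence of exit time and exit position (\cref{lem:time_exit_BM_independence}), and the strong Markov property carries this step forward. The paper writes out only the case $k=1$ and then invokes the strong Markov property and induction, so your transition-kernel and freezing-lemma formulation (including the degenerate case $R=0$) fills in details the paper leaves implicit.
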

\begin{proof}
    First, note that $(\bar \tau^{x,j}_0,\bar X^{x,j}_0) = (0, x) = (\tau^x_0, x+B_{\tau^x_0})$.
    Next, observe that by assumption it holds that $\bar{ \tau}^{x,j}_1= \mdist^{2}(x)\rho^j_1$ and $\bar{X}^{x,j}_1=x+\mdist(x)\xi^j_1$ are independent.
    Moreover, note that \cref{lem:time_exit_BM_independence} proves that $\tau_1^x$ and $x + B_{\tau_1^x}$ are independent.
    Combining this,
    the fact that $\tau_1^x$ and $\bar \tau^{x,j}_1$ are identically distributed,
    and the fact that $x+B_{\tau^x_1}$ and $\bar X^{x,j}_1$ are identically distributed
    yields that $\pr{\bar \tau^{x,j}_1, \bar X^{x,j}_1}$ and $\pr{\tau_1^x, x+B_{\tau^x_1}}$ are identically distributed.
    This, the strong Markov property (cf.,\ \cite[Chapter~2.6]{karatzas1991brownian}), and induction complete the proof of \cref{lem:distribution_WoS_BM}.
\end{proof}

\noindent For every $\eps \in (0,\infty)$ let
\begin{equation}
    D_\eps=\{x \in D \colon r(x) \ge \eps \}.
\end{equation}
For every $\eps \in (0,\infty)$, $x\in D$, $j\in \N$ let
\begin{equation}
    k_\eps^x = \inf\{k\in \N_0 \colon x+ B_{\tau^x_k}\notin D_\eps\},
\end{equation}
\begin{equation}
    \bar k_\eps^{x,j} = \inf\{k\in \N_0 \colon \bar X^{x,j}_{k}\notin D_\eps\},
\end{equation}
\begin{lemma} \label{lem:distribution_k_max}
    For all $\eps \in \pr{0,\infty}$, $x \in D$ it holds that $k_\eps^x$ and $\bar{k}^{x, 1}_\eps$ are identically distributed. 
\end{lemma}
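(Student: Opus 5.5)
We need the law of the whole sequence of positions, not just the marginal at each step as in \cref{lem:distribution_WoS_BM}. The reason is that for every $n\in\N_0$
\begin{equation*}
    \cu{k_\eps^x \le n} = \bigcup_{k=0}^{n}\cu{x+B_{\tau^x_k}\notin D_\eps}
\end{equation*}
is an event that depends on the joint vector $(x+B_{\tau^x_0},\dots,x+B_{\tau^x_n})$. Likewise, $\cu{\bar k^{x,1}_\eps\le n}$ depends on $(\bar X^{x,1}_0,\dots,\bar X^{x,1}_n)$. It therefore suffices to show that for every $n\in\N_0$ these two random vectors in $(\R^d)^{n+1}$ are identically distributed. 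Then $\mathbb{P}[k_\eps^x\le n]=\mathbb{P}[\bar k^{x,1}_\eps\le n]$ for all $n\in\N_0$. Taking complements also gives equality on the event $\{k=\infty\}$, so the two distributions on $\N_0\cup\{\infty\}$ coincide.

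To prove the joint equality in law, I would proceed by induction on $n$, strengthening the argument in the proof of \cref{lem:distribution_WoS_BM} to finite-dimensional distributions. The case $n=0$ is trivial, since both vectors equal $x$. For the induction step I would write $\bar X^{x,1}_{n+1}=F(\bar X^{x,1}_n,\xi^1_{n+1})$ with the measurable map $F(y,z)=y+\mdist(y)z$. By the independence assumptions, $\xi^1_{n+1}$ is uniformly distributed on the unit sphere and independent of $(\bar X^{x,1}_0,\dots,\bar X^{x,1}_n)$. On the Brownian side I would set $Y_k=x+B_{\tau^x_k}$. Each $\tau^x_k$ is a stopping time, so the strong Markov property applies at $\tau^x_n$: the process $\tilde B=(B_{\tau^x_n+t}-B_{\tau^x_n})_{t\ge0}$ is a Brownian motion independent of $\cF_{\tau^x_n}$. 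The vector $(Y_0,\dots,Y_n)$ is $\cF_{\tau^x_n}$-measurable. Moreover $Y_{n+1}=Y_n+\tilde B_{\tilde\sigma}$, where $\tilde\sigma$ is the exit time of $\tilde B$ from the ball of radius $\mdist(Y_n)$.

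The key point is the scaling and rotational invariance of Brownian motion. Conditionally on $\cF_{\tau^x_n}$, the exit position $\tilde B_{\tilde\sigma}$ is distributed as $\mdist(Y_n)\,\xi$, where $\xi$ is uniform on the unit sphere, as in the proof of \cref{lem:time_exit_BM_independence}. Hence $Y_{n+1}=F(Y_n,\xi)$ with $\xi$ independent of $(Y_0,\dots,Y_n)$. Since both sides arise from identically distributed vectors (by the induction hypothesis) through the same kernel, the joint laws agree at step $n+1$. The degenerate case $\mdist(Y_n)=0$ is handled separately: there $\tilde\sigma=0$ and both recursions stay put.

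The main obstacle is making the conditional-distribution step rigorous. I would handle it with a Fubini argument. Write $\tilde B_{\tilde\sigma}=G(\mdist(Y_n),\tilde B)$ for a measurable functional $G$. Independence of $\tilde B$ from $\cF_{\tau^x_n}$ then gives
\begin{equation*}
    \E[\phi(Y_0,\dots,Y_n,Y_{n+1})] = \E\bigl[\psi(Y_0,\dots,Y_n)\bigr], \qquad \psi(y_0,\dots,y_n)=\E[\phi(y_0,\dots,y_n,y_n+G(\mdist(y_n),B))],
\end{equation*}
for bounded measurable $\phi$. Brownian scaling shows $G(\rho,B)\overset{d}{=}\rho\,B_\zeta$, and $B_\zeta$ is uniformly distributed on the unit sphere. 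Doing the same computation on the $\bar X$ side yields the identical function $\psi$, and the induction hypothesis closes the argument.
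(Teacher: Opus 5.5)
Your proposal is correct and follows essentially the paper's route. The paper argues, via the strong Markov property and the independence of the $Z^j_k$, that the whole sequences $(\bar X^{x,1}_k)_{k\in\N_0}$ and $(x+B_{\tau^x_k})_{k\in\N_0}$ are identically distributed, and then reads off equality in law of the hitting indices; you make the induction over finite-dimensional distributions and the freezing/Fubini step explicit, which the paper leaves implicit.
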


\begin{proof}
    The strong Markov property (cf.,\ \cite[Section~2.6]{karatzas1991brownian}) and the assumption that $\rho_k^j, Z_k^j$, $j, k \in \N$ are independent ensure for all $x \in \ol{D}$ that $\pr{\bar{X}^{x, 1}_k}_{k \in \N_0}$ and $\pr{x + B_{\tau_k^{x}}}_{k \in \N_0}$ are identically distributed.
    This implies for all $\eps \in \pr{0, \infty}$, $x \in \ol{D}$ that $k_\eps^x$ and $\bar{k}^{x, 1}_\eps$ are identically distributed. %
    This completes the proof of \cref{lem:distribution_k_max}.
\end{proof}

\noindent
The combination of the next two lemmas quantifies the decay of the tail probabilities of $\bar{k}^{x, 1}_{\delta}$ the number of steps the Walk-on-Spheres algorithm needs to reach regions of the domain close to the boundary. To this end \cref{lem:mgf_exit_time_BM} provides bounds for the moment generating function of a Brownian motions first exit from a sphere and \cref{lem:decay} establishes the estimates of the tail probabilities of $\bar{k}^{x, 1}_{\delta}$.
The statements are modifications of \cite[Proposition~2.11]{beznea2022monte}. We provide proofs for the convenience of the reader.

\begin{lemma}\label{lem:mgf_exit_time_BM}
    Let $\mathfrak{r} \in \pr{0,\infty}$, $\gamma \in \pr{0, \frac{d}{\mathfrak{r}^2}}$
    and
    let $\tau = \inf \cu{t \in [0, \infty) \colon \norm{B_t} = \mathfrak{r}}$,
    Then it holds that
    \begin{equation} \label{eq:mgf_exit_time_BM_bounds}
        1 + \frac{\gamma \mathfrak{r}^2}{d} \le \E \br{e^{\gamma \tau}} \le \frac{1}{1 - \frac{\gamma \mathfrak{r}^2}{d}}.
    \end{equation}
\end{lemma}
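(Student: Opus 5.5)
The plan is to prove both bounds in \eqref{eq:mgf_exit_time_BM_bounds} using the auxiliary function $u\colon \R^d \to \R$, $u(y) = \frac{\mathfrak{r}^2 - \norm{y}^2}{d}$. On the closed ball of radius $\mathfrak{r}$ this function satisfies $\frac12 \Delta u = -1$ and $0 \le u \le \frac{\mathfrak{r}^2}{d}$, and it vanishes on the sphere $\cu{\norm{y} = \mathfrak{r}}$. Throughout, abbreviate $a = \frac{\gamma \mathfrak{r}^2}{d} \in (0,1)$.

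\emph{Lower bound.} First I show that $\E[\tau] = \frac{\mathfrak{r}^2}{d}$. Apply It\^o's formula to $\norm{B_t}^2 - dt$, stop at $\tau \wedge t$, and take expectations; the stochastic integral has a bounded integrand up to $\tau$, so its expectation vanishes. This gives $\E\br{\norm{B_{\tau\wedge t}}^2} = d\,\E[\tau \wedge t]$. Letting $t \to \infty$, with monotone convergence on the right and bounded convergence on the left, yields $\E[\tau] = \frac{\mathfrak{r}^2}{d}$; in particular $\tau < \infty$ a.s. The elementary inequality $e^{x} \ge 1 + x$ then gives $\E\br{e^{\gamma\tau}} \ge 1 + \gamma \E[\tau] = 1 + a$.

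\emph{Upper bound.} Fix $t \in [0,\infty)$ and apply It\^o's formula to $e^{\gamma s} u(B_s)$ on $[0, \tau \wedge t]$. Since $\frac12\Delta u = -1$, the drift is $e^{\gamma s}(\gamma u(B_s) - 1)$. The martingale part has integrand $e^{\gamma s}\nabla u(B_s)$, which is bounded for $s \le \tau \wedge t$, so it is a true martingale with zero expectation. Hence
\begin{equation*}
\E\br*{e^{\gamma(\tau\wedge t)} u(B_{\tau\wedge t})} - \frac{\mathfrak{r}^2}{d} = \E\br*{\int_0^{\tau\wedge t} e^{\gamma s}\pr{\gamma u(B_s) - 1}\dx s} \le (a - 1)\, \E\br*{\int_0^{\tau\wedge t} e^{\gamma s}\dx s}.
\end{equation*}
The left-hand side is at least $-\frac{\mathfrak{r}^2}{d}$ because $u \ge 0$. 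Therefore $(1-a)\,\E\br*{\int_0^{\tau\wedge t} e^{\gamma s}\dx s} \le \frac{\mathfrak{r}^2}{d}$. Combining this with the identity $e^{\gamma(\tau\wedge t)} = 1 + \gamma\int_0^{\tau\wedge t} e^{\gamma s}\dx s$ gives
\begin{equation*}
\E\br*{e^{\gamma(\tau\wedge t)}} \le 1 + \frac{a}{1-a} = \frac{1}{1-a}.
\end{equation*}
Finally, let $t \to \infty$ and use monotone convergence (and $\tau<\infty$ a.s.) to conclude the upper bound.

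The only delicate point is justifying that the local martingales are true martingales and passing to the limit $t \to \infty$. Stopping at $\tau \wedge t$ handles the first issue, since all integrands are then bounded. Monotone convergence handles the second, and it is precisely the condition $a < 1$ that keeps the resulting bound finite.
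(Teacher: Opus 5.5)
Your proof is correct. Your upper-bound argument is essentially the paper's, and your lower-bound argument takes a genuinely different route.

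For the upper bound, your $u(y) = (\mathfrak{r}^2 - \norm{y}^2)/d$ is an affine rescaling of the paper's test function $v(y) = \tfrac12 - \tfrac{\gamma \norm{y}^2}{2d}$. Indeed $v = \tfrac{\gamma}{2}u + \tfrac{1-a}{2}$, and your inequality $\gamma u + \tfrac12 \Delta u \le a-1$ is equivalent to the paper's $\gamma v + \tfrac12\Delta v = -\tfrac{\gamma^2\norm{y}^2}{2d} \le 0$. The paper passes to the limit along a general localizing sequence and applies Fatou's lemma, with $v(B_\tau) = \tfrac{1-a}{2}$ extracting $\E[e^{\gamma\tau}]$. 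You instead stop deterministically at $\tau\wedge t$, where all integrands are bounded. You extract $e^{\gamma(\tau\wedge t)}$ through the identity $e^{\gamma T} = 1+\gamma\int_0^T e^{\gamma s}\,ds$, obtain a bound uniform in $t$, and finish by monotone convergence. That is slightly cleaner bookkeeping of the same computation.

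For the lower bound, the paper uses a second function $w(y) = 1 - \tfrac{\gamma\norm{y}^2}{d+\gamma\mathfrak{r}^2}$, which satisfies $\gamma w + \tfrac12\Delta w \ge 0$ on the ball. It then needs dominated convergence, with the already-established upper bound supplying the integrable majorant $e^{\gamma\tau}$. Your route via $\E[\tau] = \mathfrak{r}^2/d$ and $e^x \ge 1+x$ is more elementary. It is independent of the upper bound and even of the restriction $\gamma < d/\mathfrak{r}^2$. Using Jensen's inequality instead of $e^x\ge 1+x$ would give the sharper bound $\E[e^{\gamma\tau}] \ge e^{\gamma \mathfrak{r}^2/d}$. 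The paper's approach has the merit of deriving both bounds from a single sub/supersolution template.

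One small ordering point. Before identifying $\lim_{t\to\infty}\E[\norm{B_{\tau\wedge t}}^2]$ as $\mathfrak{r}^2$ by bounded convergence, you need $\tau<\infty$ almost surely. So first use $\norm{B_{\tau\wedge t}}\le \mathfrak{r}$ to get $\E[\tau\wedge t]\le \mathfrak{r}^2/d$, and then monotone convergence gives $\E[\tau]<\infty$, rather than deducing finiteness of $\tau$ afterwards.
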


\begin{proof}
    Throughout this proof let $v, w \colon \R^d \to \R$ satisfy for all $x \in \R^d$ that
    \begin{equation}
        v \pr{x} = \frac{1}{2} - \frac{\gamma \norm{x}^2}{2d} \qquad \text{and} \qquad w\pr{x} = 1 - \frac{\gamma \norm{x}^2}{d + \gamma \mathfrak{r}^2}.
    \end{equation}
    It\^o's formula yields for every $h \in \cu{v,w}$, $t \in [0,\infty)$ that
    \begin{equation} \label{eq:mgf_exit_time_BM_ito}
        e^{\gamma t} h\pr{B_t} = h\pr{0} + \int_0^t e^{\gamma s} \pr{\gamma h \pr{B_s} + \frac{1}{2} \Delta h \pr{B_s}} \dx s + \int_0^t e^{\gamma s} \langle \nabla h\pr{B_s}, \dx B_s \rangle.
    \end{equation}
    For every $h \in \cu{v,w}$ let $\tau_k^{h}$, $k \in \N$, be a localizing sequence of stopping times for the local martingale $\int_0^{\cdot} e^{\gamma s} \langle \nabla h\pr{B_s}, \dx B_s \rangle$ and for every $h \in \cu{v,w}$, $k \in \N$ let $T_k^{h} \colon \Omega \to \R$ satisfy $T_k^h = k \wedge \tau \wedge \tau_k^{h}$. 
    Combining the fact that for all $h \in \cu{v,w}$, $k \in \N$ it holds that $T_k^{h} \le k \wedge \tau$,
    the fact that for all $h \in \cu{v,w}$ it holds that $(\tau_k^h)_{k \in \N}$ localizes $\int_0^{\cdot} e^{\gamma s} \langle \nabla h \pr{B_s}, \dx B_s \rangle$,
    and the optional stopping theorem with \eqref{eq:mgf_exit_time_BM_ito} establishes for all $h \in \cu{v,w}$, $k \in \N$ that
    \begin{equation}
        \begin{split} \label{eq:mgf_exit_time_BM_stopped_exp}
            \E \br*{e^{\gamma T_k} h \pr{B_{T_k}}} = h\pr{0} + \E \br*{\int_0^{T_k} \!\!\!\!\!\! e^{\gamma s} \pr{\gamma h \pr{B_s} + \frac{1}{2} \Delta h \pr{B_s}} \dx s }.
        \end{split}
    \end{equation}
    Furthermore, note that the fact that for every $h \in \cu{v,w}$ it holds that $\mathbb{P} \pr{\lim_{k \to \infty} \tau_k^h = \infty} = 1$ ensures for every $h \in \cu{v,w}$ that $\mathbb{P} \pr{ \lim_{k \to \infty} T_k^h = \tau} = 1$.
    This and the fact that $ \mathbb{P} \pr{\tau < \infty} = 1$ (cf.\ e.g.,\ \cite[Section~4.2]{karatzas1991brownian}, \cite[Lemma~4.27]{muller2012monte}) imply for every $h \in \cu{v,w}$ that 
    \begin{equation} \label{eq:mgf_exit_time_BM_convergence}
        \mathbb{P} \pr*{ \lim_{k \to \infty} e^{\gamma T_k^h} h \pr{B_{T_k^h}} = e^{\gamma \tau} h\pr{B_\tau} } = 1.
    \end{equation}
    Moreover, observe that the facts that $\gamma \mathfrak{r}^2 < d$
    and that for all $k \in \N$ it holds that $T_k^v \le \tau$
    assure for all $k \in \N$ that $e^{\gamma T_k^v} v \pr{B_{T_k^v}} \ge 0$. 
    This,
    the fact that for all $x \in \R^d$ it holds that $\gamma v \pr{x} + \frac{1}{2} \Delta v\pr{x} = -\frac{\gamma^2 \norm{x}^2}{2d}\le 0$,
    the fact that $v \pr{B_\tau} = \frac{1}{2} - \frac{\gamma \mathfrak{r}^2}{2d} > 0$,
    the fact that for all $k \in \N$ it holds that $T_k^v \le \tau$,
    \eqref{eq:mgf_exit_time_BM_stopped_exp},
    \eqref{eq:mgf_exit_time_BM_convergence},
    and Fatou's lemma demonstrate that
    \begin{equation}
        \E \br{e^{\gamma \tau}} \pr*{\frac{1}{2} - \frac{\gamma \mathfrak{r}^2}{2d}} = \E \br*{e^{\gamma \tau} v \pr{B_\tau}} \le \liminf_{k \to \infty} \E \br*{ e^{\gamma T_k^v} v\pr{B_{T_k^v}} } \le v \pr{0} = \frac{1}{2}.
    \end{equation}
    Hence, it holds that
    \begin{equation} \label{eq:mgf_exit_time_BM_upper_bound}
        \E \br{e^{\gamma \tau}} \le \frac{1}{1 - \frac{\gamma \mathfrak{r}^2}{d}}.
    \end{equation}
    Next, note that the fact that for all $x \in \R^d$ it holds that $w\pr{x} \le 1$. 
    Combining this with the fact that it holds for all $k \in \N$ that $T_k^w \le \tau$ assures that for all $k \in \N$ it holds that $e^{\gamma T_k^w} w \pr{B_{T_k^w}} \le e^{\gamma \tau}$.
    This,
    the fact that for all $x \in \R^d$ with $\norm{x} \le \mathfrak{r}$ it holds that $\gamma w\pr{x} + \frac{1}{2} \Delta w \pr{x} = \frac{\gamma^2(\mathfrak{r}^2-\norm{x}^2)}{d+\gamma \mathfrak{r}^2} \ge 0$,
    the fact that $w \pr{B_\tau} = 1 - \frac{\gamma \mathfrak{r}^2}{d + \gamma \mathfrak{r}^2} > 0$,
    \eqref{eq:mgf_exit_time_BM_stopped_exp},
    \eqref{eq:mgf_exit_time_BM_convergence},
    \eqref{eq:mgf_exit_time_BM_upper_bound},
    and the dominated convergence theorem yield that
    \begin{equation} \label{eq:mgf_exit_time_BM_lower_bound}
        \E \br{e^{\gamma \tau}} \pr*{1 - \frac{\gamma \mathfrak{r}^2}{d + \gamma \mathfrak{r}^2}} = \E \br*{e^{\gamma \tau} w\pr{B_\tau}} = \lim_{k \to \infty} \E\br{ e^{\gamma T_k^w} w \pr{B_{T_k^w}} } \ge w \pr{0} = 1.
    \end{equation}
    This demonstrates
    \begin{equation}
        \E \br{e^{\gamma \tau}} \ge \frac{1}{1 - \frac{\gamma \mathfrak{r}^2}{d + \gamma \mathfrak{r}^2}} = \frac{d + \gamma \mathfrak{r}^2}{d} = 1 + \frac{\gamma \mathfrak{r}^2}{d}. 
    \end{equation}
    This and \eqref{eq:mgf_exit_time_BM_upper_bound} establish \eqref{eq:mgf_exit_time_BM_bounds}. 
    This completes the proof of \cref{lem:mgf_exit_time_BM}.
\end{proof}

\begin{lemma} \label{lem:decay} 
    Assume $\delta > 0$.
    Then it holds for all $M \in \N$, $x \in D$ that
    \begin{equation} \label{eq:decay}
        \mathbb{P} \pr*{\bar{k}^{x,1}_\delta \ge M} \le 2 \pr*{1 + \tfrac{\beta^2 \delta^2}{2\diam \pr{D}^2}}^{-M}. 
    \end{equation}
\end{lemma}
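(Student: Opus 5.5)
Via \cref{lem:distribution_k_max} it suffices to bound $\mathbb{P}(k^x_\delta \ge M)$ for the continuous-time embedding $x + B_{\tau^x_k}$. Every step taken before $k^x_\delta$ starts in $D_\delta$, so its sphere has radius at least $\beta\delta$; for the elapsed Brownian time we then get $\tau^x_{k^x_\delta} \ge \tau^x_M$ on $\{k^x_\delta \ge M\}$, which is a sum of $M$ exit times from spheres of radius $\ge \beta\delta$. The plan is an exponential Chebyshev bound
\begin{equation*}
\mathbb{P}(k^x_\delta \ge M) \le \mathbb{P}\bigl(\tau^x_{M \wedge k^x_\delta} \text{ has } M \text{ big steps}\bigr) \lesssim \frac{\E\bigl[e^{\gamma \tau^x_{M\wedge k^x_\delta}} \ldots\bigr]}{\ldots}.
\end{equation*}
It is easier to organise this as a supermartingale argument. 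With $\gamma>0$ to be chosen, consider
\begin{equation*}
S_k = e^{\gamma \tau^x_{k}} \prod_{j=1}^{k} \bigl(1 + \tfrac{\gamma \mdist(x+B_{\tau^x_{j-1}})^2}{d}\bigr)^{-1}.
\end{equation*}
By the strong Markov property and the lower bound in \cref{lem:mgf_exit_time_BM}, $S_k$ is a submartingale. The naturally usable object, however, goes the other way: since $\E[\tau^x]<\infty$ is controlled, we want the product to be large and the exponential to be controlled.

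The concrete route is therefore the following. By the strong Markov property, $S_{k\wedge k^x_\delta}$ has $\E[S_{k\wedge k^x_\delta}] \ge 1$ as a submartingale, but we need an upper bound on $\E[e^{\gamma \tau^x}]$. Take $\gamma = \frac{1}{2\diam(D)^2}$. An upper bound $\E[e^{\gamma\tau^x}] \le 2$ follows from the same It\^o argument as in the upper bound of \cref{lem:mgf_exit_time_BM}, applied to $D \subseteq$ a ball of radius $\diam(D)$ around $x$: the exit time from $D$ is at most the exit time from that ball, and $\gamma \diam(D)^2/d \le 1/2$ gives $\E[e^{\gamma\tau^x}]\le 2$. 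Since $\tau^x_k \le \tau^x$ (the walk's spheres lie in $D$, because $\mdist \le r$ on $D_\delta$), optional stopping for the submartingale (bounded stopping time $M\wedge k^x_\delta$) yields
\begin{equation*}
\E\Bigl[e^{\gamma\tau^x} \prod_{j=1}^{M\wedge k^x_\delta}\bigl(1+\tfrac{\gamma\mdist(\cdot)^2}{d}\bigr)^{-1}\Bigr]\ge\ldots
\end{equation*}
This has the wrong direction, so I switch to the cleaner statement instead. Define $P_k = \prod_{j\le k}(1+\gamma\mdist_{j-1}^2/d)$ and $N_k = e^{-\gamma\tau^x_k}P_k$. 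Applying the conditional Jensen inequality to $e^{-\gamma\cdot}$ does not directly give the bound either. The natural choice is to use instead, via Cauchy--Schwarz, $\E[P_{M\wedge k}^{1/2}\ldots]$.

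Simplest final plan: $\E[e^{\gamma\tau^x_{M\wedge k}} P^{-1}_{M\wedge k}]\ge 1$ is not enough, so I use Markov's inequality on $P$. On $\{k^x_\delta\ge M\}$ we have $P_M \ge (1+\gamma\beta^2\delta^2/d)^M$. The claim $\E[P_{M\wedge k^x_\delta} e^{-\gamma\tau^x_{M\wedge k^x_\delta}}]\le 1$ has the wrong direction for the lower mgf bound, so the factor $d$ is removed by choosing $\gamma = d/(2\diam(D)^2)$. This still gives $\E e^{\gamma\tau^x}\le 2$ by the ball bound ($\gamma\diam^2/d = 1/2$), and then $1+\gamma\beta^2\delta^2/d = 1+\beta^2\delta^2/(2\diam^2)$, which matches the statement. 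The key inequality $P_{M\wedge k}\le$ something with $\E[\cdot]\le 2$ is obtained from the supermartingale $P_k e^{-\gamma \tau_k}$ combined with H\"older's inequality against $e^{\gamma\tau^x}$. This step is the main obstacle: I would try to prove instead directly that $\E[e^{\gamma \tau^x_{M\wedge k}}] \ge \E[P_{M\wedge k}]$ via the tower property, iterating the lower bound in \cref{lem:mgf_exit_time_BM} conditionally step by step. This gives $\E[P_{M\wedge k^x_\delta}]\le \E[e^{\gamma\tau^x}]\le 2$, and then Markov's inequality finishes the proof.
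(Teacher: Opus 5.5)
Stripped of the abandoned attempts, your final plan follows the paper's route. You take $\gamma=d/(2\diam(D)^2)$ and bound $\E[e^{\gamma\tau^x}]\le 2$ via the exit time from the ball of radius $\diam(D)$ and the upper estimate of \cref{lem:mgf_exit_time_BM}. You then want a lower bound $\E[e^{\gamma\tau^x_\sigma}]\ge\E[P_\sigma]$ with $P_k=\prod_{j\le k}(1+\gamma\mdist(x+B_{\tau^x_{j-1}})^2/d)$, and finish with Markov's inequality. Truncating at $\sigma=M\wedge k^x_\delta$ is a harmless simplification of the paper's use of $\sigma=k^x_\delta$ plus monotone convergence.

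The gap is the step you yourself flag as the main obstacle, and your justification for it (``the tower property, iterating the lower bound conditionally step by step'') does not work as stated. Conditioning on $\cF_{\tau^x_{j-1}}$ gives $\E[e^{\gamma(\tau^x_j-\tau^x_{j-1})}\mid\cF_{\tau^x_{j-1}}]\ge 1+\gamma\mdist(x+B_{\tau^x_{j-1}})^2/d$. However, inside $\E[e^{\gamma\tau^x_\sigma}]$ this factor is multiplied by quantities that depend on the exit point $x+B_{\tau^x_j}$: all later radii, and whether the walk has already stopped. To pull the lower bound out, you need that, given $\cF_{\tau^x_{j-1}}$, the time increment is independent of the exit position. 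Without this, long exit times could be correlated with landing where the remaining product is small, and the factor cannot be separated. Nothing in your plan supplies this independence. The submartingale $e^{\gamma\tau^x_k}/P_k$ you build only yields the ratio form $\E[e^{\gamma\tau^x_\sigma}/P_\sigma]\ge 1$, which, as you noticed, points the wrong way.

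The missing ingredient is \cref{lem:time_exit_BM_independence}, the independence of the exit time and exit position from a sphere. This is why the paper passes to the discrete model. There, by \cref{lem:distribution_WoS_BM,lem:distribution_k_max}, one has $\E[e^{\gamma\tau^x_{k^x_\delta}}]=\E[\exp(\gamma\sum_{i=1}^{\bar k^{x,1}_\delta}\mdist(\bar X^{x,1}_{i-1})^2\rho^1_i)]$. The spatial path $\bar X^{x,1}$, and hence $\bar k^{x,1}_\delta$ and all radii, is a function of the $Z$'s alone and is independent of the i.i.d.\ $\rho^1_i$. Conditioning on the whole path $\bar X^{x,1}$ makes the number of steps and the radii deterministic. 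The conditional expectation then factorizes into $\prod_i\E[e^{\gamma\mdist(y_{i-1})^2\zeta}]\ge\prod_i(1+\gamma\mdist(y_{i-1})^2/d)$, which is exactly your inequality.

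Alternatively, in continuous time, a backward argument that replaces one factor at a time works. It needs the same conditional independence at each step, plus the strong Markov property to write the remaining product's conditional expectation as a nonnegative function of the current exit point. With this step supplied, the rest of your plan goes through and gives the stated constant.
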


\begin{proof}
    Throughout this proof
    let $T_c \colon \Omega \to \br{0,\infty}$, $c \in \pr{0, \infty}$, satisfy for every $c \in \pr{0, \infty}$ that $T_c = \inf \cu{ t \in [0,\infty) \colon \norm{B_t} = c}$
    and let $\lambda\pr{\gamma} \in \R$, $\gamma \in \pr{0, \tfrac{d}{\diam \pr{D}^2}}$, satisfy for every $\gamma \in \pr{0, \tfrac{d}{\diam \pr{D}^2}}$ that $\lambda\pr{\gamma} = \log \pr{1 + \gamma \tfrac{\beta^2\delta^2}{d}}$.
    First, observe that
    the fact that for all $x \in D$, $y \in D_\delta$ it holds that $\mdist\pr{x} \le r\pr{x}$ and $\mdist \pr{y} \ge \beta r\pr{y} \ge \beta \delta$
    and \cref{lem:mgf_exit_time_BM} assure for every $\gamma \in \pr{0, \frac{d}{\diam \pr{D}^2}}$, $x \in D_\delta$ that
    \begin{equation} \label{eq:decay_exit_diam_upper}
        \E \br*{ e^{\gamma T_{\diam\pr{D}}} } \le \frac{d}{d - \gamma \diam \pr{D}^2}
    \end{equation}
    and
    \begin{equation} \label{eq:decay_exit_rx_lower}
        \E \br*{e^{\gamma T_{\mdist\pr{x}}}} \ge 1 + \frac{\gamma \mdist\pr{x}^2}{d} \ge 1 + \gamma \frac{\beta^2 \delta^2}{d} = e^{\lambda\pr{\gamma}}.
    \end{equation}
    Throughout the remainder of this proof let $K_\delta \colon D^{\N_0} \to \N_0 \cup \cu{+\infty}$ satisfy for every $x = \pr{x_0, x_1, \dots} \in D^{\N_0}$ that $K_\delta (x) = \inf \cu{n \in \N_0 \colon r\pr{x_n} < \delta}$. 
    Observe that $K_\delta$ is measurable.
    Moreover, note that for all $x \in D$, $j \in \N$ it holds that $K_\delta \pr{\bar{X}^{x, j}} = \bar{k}_\delta^{x, j}$.
    This,
    \cref{lem:distribution_k_max},
    the fact that $\mathbb{P} \pr{k_\delta^x < \infty} = 1$,
    \eqref{eq:decay_exit_rx_lower},
    the fact that for all $x \in D$ it holds that $T_{ \diam\pr{D} } \ge \tau^x_{k_\delta^x}$,
    the monotone convergence theorem,
    and the law of total expectation
    yield for all $\gamma \in \pr{0, \tfrac{d}{\diam\pr{D}^2}}$, $x \in D_\delta$ that
    \begin{equation}
        \begin{split} \label{eq:decay_conditioning_I}
            \tfrac{d}{d - \gamma \diam\pr{D}^2} & \ge \E \br*{ e^{\gamma T_{ \diam\pr{D} }} } \ge \E \br*{e^{\gamma \tau^x_{k_\delta^x} }} = \E \br*{e^{\gamma \bar{\tau}^{x, 1}_{\bar{k}_\delta^{x,1}} }}\\
            &= \E \br*{ \exp\pr*{\textstyle \gamma \sum_{i = 1}^{\bar{k}_\delta^{x,1}} \mdist \pr*{ \bar{X}^{x, 1}_{i - 1} }^2 \rho^1_i } } \\
            &= \lim_{n \to \infty} \E \br*{ \exp\pr*{\textstyle \gamma \sum_{i = 1}^{\bar{k}_\delta^{x,1} \wedge n} \mdist \pr*{ \bar{X}^{x, 1}_{i - 1} }^2 \rho^1_i } } \\
            &= \lim_{n \to \infty} \E \br*{ \E \br*{ \exp\pr*{\textstyle \gamma \sum_{i = 1}^{\bar{k}_\delta^{x,1} \wedge n} \mdist \pr*{ \bar{X}^{x, 1}_{i - 1} }^2 \rho^1_i } \; \Big| \; \bar{X}^{x, 1} }  }.
        \end{split}
    \end{equation}
    The fact that $K_\delta\pr{ \bar{X}^{x, 1} }$ is $\sigma \pr{\bar{X}^{x, 1}}$-measurable
    and the fact that $\bar{X}^{x,1}$ and $\pr{ \rho_n^1}_{n \in \N}$ are independent
    ensures for every $x \in D_\delta$, $n \in \N$, $\gamma \in \pr{0, \tfrac{d}{\diam \pr{D}^2}}$ that $\phi_{n, \gamma} \pr{\bar{X}^{x,1}}$, where
    \begin{equation}
        \begin{split} \label{eq:decay_conditional_expectation}
            \phi_{n, \gamma} \pr{y} &= \E \br*{ \exp\pr*{ \textstyle \gamma \sum_{i = 1}^{K_\delta\pr{y} \wedge n} \mdist\pr*{y_{i - 1}}^2 \rho_i^1 } } \\
            &= \int_\Omega \exp\pr*{ \textstyle \gamma \sum_{i = 1}^{K_\delta\pr{y} \wedge n} \mdist\pr*{y_{i - 1}}^2 \rho_i^1\pr{\omega} } \mathbb{P} \pr{\dx \omega}  , \quad y \in D^{\N_0},    
        \end{split}
    \end{equation}
    is (a version of) the conditional expectation $\E \br{ \exp\pr{\textstyle \gamma \sum_{i = 1}^{\bar{k}_\delta^{x,1} \wedge n} \mdist \pr{ \bar{X}^{x, 1}_{i - 1} }^2 \rho^1_i } \; | \; \bar{X}^{x, 1} } $.
    The fact that for all $y \in D^{\N_0}$, $j \in \cu{0,1,\dots, K_{\delta}\pr{y}}$ it holds that $y_{j - 1} \in D_\delta$,
    \eqref{eq:decay_exit_rx_lower},
    \eqref{eq:decay_conditional_expectation},
    and the assumption that $(\rho_n^1)_{n \in \N}$ are independent
    imply for all $y \in D^{\N_0}$, $n \in \N$, $\gamma \in \pr{0, \tfrac{d}{\diam\pr{D}^2}}$ that
    \begin{equation}
        \begin{split}
            \phi_{n, \gamma}(y) &= \!\! \prod_{i = 1}^{K_\delta \pr{y} \wedge n} \!\! \E \br*{ e^{\gamma \mdist \pr{y_{i - 1}}^2 \rho_i^{1}}} = \!\! \prod_{i = 1}^{K_\delta \pr{y} \wedge n} \!\! \E \br*{ e^{\gamma T_{\mdist\pr{y_{i - 1}}}} } \ge \!\!\prod_{i = 1}^{K_\delta \pr{y} \wedge n} \!\! e^{\lambda \pr{\gamma}} = e^{\lambda\pr{\gamma} \pr{ K_\delta \pr{y} \wedge n }}.
        \end{split}
    \end{equation}
    This,
    \eqref{eq:decay_conditioning_I},
    and the monotone convergence theorem
    ensure for all $x \in D_\delta$, $\gamma \in \pr{0, \tfrac{d}{\diam\pr{D}^2}}$ that
    \begin{equation}
        \begin{split}
            \hspace{-0.5cm}\tfrac{d}{d - \gamma \diam \pr{D}^2} &\ge \lim_{n \to \infty} \E \br*{ \E \br*{ \exp\pr*{\textstyle \gamma \sum_{i = 1}^{\bar{k}_\delta^{x,1} \wedge n} \mdist \pr*{ \bar{X}^{x, 1}_{i - 1} }^2 \rho^1_i } \; \Big| \; \bar{X}^{x, 1} }  } = \lim_{n \to \infty} \E \br*{ \phi_{n, \gamma}\pr{\bar{X}^{x, 1}} } \\
            &\ge \lim_{n \to \infty} \E \br*{ e^{\lambda\pr{\gamma} \pr*{K_\delta\pr{\bar{X}^{x,1}} \wedge n }} } = \E \br*{ e^{\lambda \pr{\gamma} K_\delta\pr{\bar{X}^{x, 1}}} } \\
            &= \E \br*{ e^{ \lambda\pr{\gamma} \bar{k}^{x,1}_\delta }} = \E \br*{ \pr*{ 1 + \gamma \tfrac{\beta^2 \delta^2}{d} }^{\bar{k}^{x,1}_\delta} }.
        \end{split}
    \end{equation}
    Choosing $\gamma = \tfrac{d}{2\diam\pr{D}^2}$ demonstrates for all $x \in D_\delta$ that
    \begin{equation}
         \E \br*{ \pr*{ 1 + \tfrac{\beta^2 \delta^2}{2 \diam \pr{D}^2} }^{\bar{k}^{x,1}_\delta} } \le 2.
    \end{equation}
    This and Markov's inequality yield for all $x \in D_\delta$, $M \in \N$ that
    \begin{equation}
        \mathbb{P} \pr{\bar{k}^{x, 1}_\delta \ge M} \le 2 \pr*{1 + \tfrac{\beta^2 \delta^2}{2\diam \pr{D}^2}}^{-M}.
    \end{equation}
    This proves \eqref{eq:decay}. This completes the proof of \cref{lem:decay}.
\end{proof}

\noindent
The next lemma provides a probability measure to calculate expectations of integrals of time dependent functions of Brownian motion based on the time-space occupation behavior of Brownian motion.

\begin{lemma} \label{lem:occ_measure}
    Let $U = \cu{ x \in \R^d \colon \norm{x} \le 1 }$, and
    let $\mu \colon \cB \pr{[0, \infty) \times U} \to [0, \infty)$ satisfy for all $A \in \cB\pr{[0, \infty) \times U}$ that $\mu\pr{A} = d \E \br*{\int_0^\zeta \ind_A \pr{s, B_s} \dx s}$. Then
    \begin{enumerate}[label = (\roman*)]
        \item \label{it:occ_measure_probability} it holds that $\mu$ is a probability measure and
        \item \label{it:occ_measure_integral} it holds for all measurable $f \colon [0, \infty) \times U \to \br{-\infty, \infty}$ where $f$ is $\mu$-integrable or satisfies $f \ge 0$ that 
        \begin{equation}
            \int_{[0, \infty) \times U} f\pr{p} \mu\pr{\dx p} = d \E \br*{ \int_0^\zeta f(s, B_s) \dx s }.
        \end{equation}
    \end{enumerate}
\end{lemma}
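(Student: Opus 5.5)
The plan is to show first that $\mu$ is a well-defined measure, then that it has total mass one, and finally to get \ref{it:occ_measure_integral} by the standard measure-theoretic induction from indicators to general functions.

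\textbf{Measure property.} For measurability, note that $(s,\omega) \mapsto \ind_{\{s < \zeta(\omega)\}} \ind_A(s, B_s(\omega))$ is jointly measurable. Indeed, $B$ has continuous paths and is therefore progressively measurable, and $\zeta$ is a stopping time. Hence $\int_0^\zeta \ind_A(s,B_s)\dx s$ is a random variable with values in $[0,\infty]$, by Tonelli. Clearly $\mu(\emptyset)=0$. For countable additivity, take disjoint $A_n$. Then $\ind_{\cup A_n} = \sum_n \ind_{A_n}$, and the monotone convergence theorem, applied once to the time integral and once to the expectation, gives $\mu(\cup_n A_n) = \sum_n \mu(A_n)$. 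We also need $(s,B_s) \in [0,\infty)\times U$ for $s<\zeta$, which holds because $\|B_s\|<1$ before $\zeta$. The integrand is therefore well-defined on $[0,\zeta)$, and the endpoint $s=\zeta$ has Lebesgue measure zero.

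\textbf{Total mass.} Here $\mu([0,\infty)\times U) = d\,\E[\zeta]$, so the key step is to show $\E[\zeta] = 1/d$. I would apply Itô's formula to $\|B_t\|^2 - d t$, which is a martingale. Optional stopping at the bounded time $\zeta \wedge k$ gives $\E[\|B_{\zeta\wedge k}\|^2] = d\,\E[\zeta\wedge k]$. The left side is bounded by $1$, so monotone convergence yields $\E[\zeta] \le 1/d < \infty$; in particular $\zeta<\infty$ a.s. Bounded convergence on the left side, with $\|B_\zeta\|=1$ by path continuity, then gives $\E[\zeta] = 1/d$. This is the only step with real content, and it is routine. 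Alternatively one can argue via \cref{lem:mgf_exit_time_BM}: the bounds $1+\gamma/d \le \E[e^{\gamma\zeta}] \le (1-\gamma/d)^{-1}$ give $\E[\zeta] = 1/d$ by differentiating at $\gamma = 0$, but the Itô argument is cleaner.

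\textbf{The integral identity.} By definition of $\mu$, the identity in \ref{it:occ_measure_integral} holds for $f=\ind_A$. Linearity extends it to nonnegative simple functions. Monotone convergence on both sides then extends it to measurable $f \ge 0$ with values in $[0,\infty]$; on the right-hand side this uses monotone convergence for the $\dx s$-integral and then for the expectation. For $\mu$-integrable $f$, split $f = f^+ - f^-$. Both parts have finite integrals, so by the nonnegative case $d\,\E[\int_0^\zeta f^\pm(s,B_s)\dx s] < \infty$. Consequently $\int_0^\zeta |f(s,B_s)|\dx s < \infty$ almost surely, the right-hand side is well-defined, and subtracting the two identities gives the claim.
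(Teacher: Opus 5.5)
Your proposal is correct and follows the same route as the paper. Both show that $\mu$ is a measure with total mass $d\,\mathbb{E}[\zeta]$, use $\mathbb{E}[\zeta]=1/d$ to normalize, and extend the identity from indicators to general $f$ via simple functions and monotone convergence. The only difference is that the paper cites Port--Stone for $\mathbb{E}[\zeta]=1/d$ and skips the measurability and countable-additivity checks. You instead derive $\mathbb{E}[\zeta]=1/d$ by optional stopping for $\|B_t\|^2-dt$ and verify those points explicitly, which is a valid self-contained substitute.
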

\begin{proof}
    Throughout this proof let $\nu \colon \cB \pr{[0, \infty) \times U} \ni A \mapsto \E \br*{\int_0^{\zeta} \ind_A \pr{s, B_s} \dx s} \in  [0, \infty]$.
    First, observe that \cite[e.g., Proposition~2.2.21 or 3.1.8]{port1978brownian} ensures that $\E \br{ \zeta } = \frac{1}{d}$. This proves that $\nu$ is a finite measure with $\nu \pr{[0, \infty) \times U} = \E \br*{ \zeta } = \frac{1}{d}$.
    This and the fact that for all $A \in \cB \pr{[0, \infty) \times U}$ it holds that $\mu \pr{A} = \frac{\nu\pr{A}}{\E \br*{ \zeta }} = d \nu\pr{A}$ prove \cref{it:occ_measure_probability}.
    Next, observe that through approximation by simple functions it holds for all measurable $f \colon [0, \infty) \times U \to \br{-\infty, \infty}$ where $f$ is $\mu$-integrable or satisfies $f \ge 0$ that 
    \begin{equation}
        \int_{[0, \infty) \times U} f\pr{p} \mu \pr{\dx p} = d \int_{[0, \infty) \times U} f\pr{p} \nu \pr{\dx p} = d \E \br*{\int_0^{\zeta} f\pr{s, B_s} \dx s}.
    \end{equation}
    This proves \cref{it:occ_measure_integral}.
    The proof of \cref{lem:occ_measure} is thus complete.
\end{proof}

In the following lemma, a representation of functions of the form \eqref{eq:expectation} via the Walk-on-Spheres process is provided. In the sense of \cref{rem:connection_pde_expectation}, this establishes a Walk-on-Spheres based representation of solutions of the PDE \eqref{eq:elliptic_pde} (see, e.g., \cite[Lemma~3.4]{grohs2022deepelliptic} and \cite[Corollary~2.10]{beznea2022monte} for similar results).

\begin{lemma} \label{lem:WoS_representation_solution}
    Assume $\delta = 0$,
    let $h_0 \colon [0, \infty) \times \partial D \to \R$, $h_1 \colon [0, \infty) \times \ol{D} \to \R$ be measurable and bounded,
    let $w \colon D \to \R$ satisfy for all $x \in D$ that
    \begin{equation}
        w \pr{x} = \E \br*{ h_0 \pr{ \tau^{x}, x + B_{\tau^{x}} } } + \E \br*{ \int_{0}^{\tau^{x}} h_1 \pr{s, x + B_s} \dx s },
    \end{equation}
    let $\mu$  be as in \cref{lem:occ_measure},
    let $\pr{V_k, Y_k} \sim \mu$, $k \in \N$, be independent,
    assume that $\pr{\pr{V_k, Y_k}}_{k \in \N}$ and $B$ are independent.
    Then it holds for all $x \in D$ that
    \begin{multline} \label{eq:WoS_representation_solution}
            w\pr{x} = \E \bigg[ h_0 \pr{\tau^x, x + B_{\tau^x}} \\
            + \frac{1}{d}  \SmallSum{k = 1}{\infty} \mdist\pr{x + B_{\tau_{k - 1}^x}}^2 h_1 \pr*{ \tau^x_{k - 1} +  \mdist\pr{x + B_{\tau_{k - 1}^x}}^2 V_k, x +  B_{\tau_{k - 1}^x} + \mdist\pr{x + B_{\tau_{k - 1}^x}} Y_k }  \bigg]. 
    \end{multline}
\end{lemma}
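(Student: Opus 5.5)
Since $\delta = 0$, the function $\mdist$ satisfies $\mdist \le r$ on $D$, so the balls $\{y : \norm{y - (x+B_{\tau^x_{k-1}})} \le \mdist(x+B_{\tau^x_{k-1}})\}$ lie in $\ol{D}$ and $\tau^x_k \le \tau^x$ for all $k$. The plan is to split the time integral along the Walk-on-Spheres stopping times. We then show that each piece has the expectation of the corresponding summand in \eqref{eq:WoS_representation_solution}, using the strong Markov property, Brownian scaling and \cref{lem:occ_measure}.

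\textbf{Decomposition.} First I show $\tau^x_k \to \tau^x$ a.s. as $k \to \infty$. The sequence is nondecreasing and bounded by $\tau^x$, so it has a limit $\tau_\infty \le \tau^x$. If $\tau_\infty < \tau^x$, then by continuity $x + B_{\tau_\infty} \in D$, so $r$ is positive near it. However, increments of size $\mdist(x+B_{\tau^x_{k-1}})$ must occur in vanishing time, so $\mdist(x+B_{\tau^x_{k-1}}) \to 0$. Here lies the main obstacle: with $\delta = 0$ the definition gives no lower bound $\mdist \ge \beta r$ away from the boundary. I would read the definition with $\eps = 0$ as $\mdist(x) \ge \beta r(x)$ for all $x \in D$ with $r(x) \ge 0$, i.e.\ for all of $D$. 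This yields $\mdist \ge \beta r$, which is bounded away from $0$ near $x+B_{\tau_\infty}$, a contradiction. Hence
\[
\int_0^{\tau^x} h_1(s, x+B_s)\dx s = \sum_{k=1}^\infty \int_{\tau^x_{k-1}}^{\tau^x_k} h_1(s, x+B_s)\dx s ,
\]
and by boundedness of $h_1$ and $\E[\tau^x] < \infty$ (finite since $D$ is bounded) we may interchange expectation and sum via dominated convergence. The $h_0$ term is untouched.

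\textbf{Each summand.} Fix $k$ and write $\tau' = \tau^x_{k-1}$, $\fr = \mdist(x+B_{\tau'})$, and $\tilde B_t = B_{\tau'+t} - B_{\tau'}$. By the strong Markov property, $\tilde B$ is a Brownian motion independent of $\cF_{\tau'}$. Let $\tilde\zeta = \inf\{t : \norm{\tilde B_t} \ge 1\}$ and use the scaling $\hat B_u = \fr^{-1}\tilde B_{\fr^2 u}$, which is still a Brownian motion independent of $\cF_{\tau'}$, since $\fr$ is $\cF_{\tau'}$-measurable. Then $\tau^x_k - \tau' = \fr^2 \hat\zeta$, where $\hat\zeta$ is the exit time of $\hat B$ from the unit ball. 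Substituting $s = \tau' + \fr^2 u$ gives
\[
\int_{\tau'}^{\tau^x_k} h_1(s, x+B_s)\dx s = \fr^2 \int_0^{\hat\zeta} h_1\bigl(\tau' + \fr^2 u,\, x+B_{\tau'} + \fr \hat B_u\bigr)\dx u .
\]
Conditioning on $\cF_{\tau'}$ (freezing $(\tau', x+B_{\tau'})$) and applying \cref{lem:occ_measure}\ref{it:occ_measure_integral} to the bounded function $(v,y)\mapsto h_1(t_0+\fr^2 v, z_0 + \fr y)$ turns the conditional expectation into $\frac{\fr^2}{d}\int h_1(\tau'+\fr^2 v, x+B_{\tau'}+\fr y)\,\mu(\dx (v,y))$. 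Since $(V_k,Y_k) \sim \mu$ is independent of $B$, this equals the conditional expectation of $\frac1d \fr^2 h_1(\tau'+\fr^2V_k, x+B_{\tau'}+\fr Y_k)$ given $\cF_{\tau'}$. Taking expectations yields the $k$-th summand. Integrability of the series follows from $\frac1d\sum_k \E[\fr_k^2] \sup|h_1| = \sup|h_1|\,\E[\tau^x]$, because $\E[\tau^x_k-\tau^x_{k-1} \mid \cF_{\tau^x_{k-1}}] = \fr_k^2/d$. This justifies pulling the sum inside the expectation in \eqref{eq:WoS_representation_solution}.

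Measurability of the frozen-variable functions and the use of regular conditional expectations (a freezing lemma for independent variables) are routine. The only genuinely delicate point is the convergence $\tau^x_k \to \tau^x$ discussed above.
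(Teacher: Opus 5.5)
Your proposal is correct and follows essentially the same route as the paper: split the integral along the stopping times $\tau^x_k$, pass to the limit by dominated convergence, and evaluate each piece via the strong Markov property, Brownian scaling, \cref{lem:occ_measure}, and the independence of $(V_k,Y_k)$ from $B$. The differences are minor: you prove $\tau^x_k \to \tau^x$ directly where the paper cites \cite[Remark~2.9]{beznea2022monte}, and your reading $\mdist \ge \beta r$ on all of $D$ is simply the literal content of \cref{def:beta_eps_distance} with $\eps = 0$, not an extra assumption; you also explicitly verify $\frac{1}{d}\sum_{k} \E[\mdist(x+B_{\tau^x_{k-1}})^2] = \E[\tau^x] < \infty$ to justify moving the infinite sum inside the expectation, a step the paper leaves implicit.
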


\begin{proof}
    Throughout this proof
    let $T_c \colon \Omega \to \br{0,\infty}$, $c \in \pr{0, \infty}$, satisfy for every $c \in \pr{0, \infty}$ that $T_c = \inf \cu{ t \in [0,\infty) \colon \norm{B_t} = c}$.
    First, note that the assumption that $\delta = 0$,
    \cite[Remark~2.9]{beznea2022monte},
    the assumption that $h_{1}$ is bounded,
    and the dominated convergence ensure for all $x \in D$ that
    \begin{equation}
        \begin{split} \label{eq:WoS_representation_solution_limit_stopping_times}
            \E \br*{ \int_0^{\tau^x} h_1\pr{s, x + B_s} \dx s } &= \lim_{n \to \infty} \sum_{k = 1}^n \E \br*{ \int_{\tau_{k - 1}^x}^{\tau_k^x} h_1 \pr{s, x + B_s} \dx s }.
        \end{split}
    \end{equation}
    The strong Markov property (cf.,\ \cite[Section~2.6]{karatzas1991brownian}) 
    and the scaling property imply for all $x \in D$, $k \in \N$ that 
    \begin{equation}
        \begin{split} \label{eq:WoS_representation_solution_conditioning}
            &\E \br*{ \int_{\tau_{k - 1}^x}^{\tau_k^x} h_1 \pr{s, x + B_s} \dx s }\\
            &= \E \br*{ \int_0^{\tau_{k}^x - \tau_{k - 1}^x} \hspace{-0cm} h_1 \pr{\tau_{k - 1}^x + s, x + B_{\tau_{k - 1}^x + s}} \dx s  } \\
            &= \E \br*{ \E \br*{ \int_0^{\tau_{k}^x - \tau_{k - 1}^x} h_1 \pr{ \tau_{k - 1}^x + s, x + B_{\tau^x_{k - 1} + s} } \dx s \mid \cF_{\tau_{k - 1}^x} } } \\
            &= \E \br*{ \left.\E\br*{ \int_0^{T_{ \mdist \pr{x + y_0}}} h_1 \pr{ t_0 + s, x + y_0 + B_s }  \dx s  } \right|_{\pr{t_0, y_0} = \pr{\tau_{k - 1}^x, B_{\tau_{k - 1}^x}}} } \\
            &= \E \br*{ \left. \E \br*{ \mdist\pr{x + y_0}^2 \int_0^{T_1} h_1\pr{t_0 + \mdist\pr{x + y_0}^2s, x + y_0 + \mdist\pr{x + y_0} B_{s} } \dx s }  \right|_{\pr{t_0, y_0} = \pr{\tau_{k - 1}^x, B_{\tau_{k - 1}^x}}} }.
        \end{split}
    \end{equation}
    Throughout the remainder of this proof let $U = \cu{ x \in \R^d \colon \norm{x} \le 1}$.
    Furthermore, note that \cref{lem:occ_measure}
    and the assumption that $\pr{V_1, Y_1} \sim \mu$ demonstrate
    for all measurable $\varphi \colon [0, \infty) \times U \to \br{-\infty, \infty}$ where $\varphi$ is $\mu$-integrable or satisfies $\varphi \ge 0$ that  that
    \begin{equation}
        \E \br*{ \int_0^{T_1} \varphi\pr{s, B_s} \dx s} = \frac{1}{d} \int_{[0, \infty) \times U} \varphi \pr{v,y} \mu \pr{\dx \pr{v,y}} = \frac{1}{d} \E \br*{\varphi\pr{V,Y}}.
    \end{equation}
    This, 
    the assumption that $\pr{V_k, Y_k} \sim \mu$, $k \in \N$, are independent,
    the assumption that $\pr{ \pr{V_k, Y_k}}_{k \in \N}$ and $B$ are independent,
    \eqref{eq:WoS_representation_solution_limit_stopping_times},
    and \eqref{eq:WoS_representation_solution_conditioning} establish for all $x \in D$ that  
    \begin{equation}
        \begin{split} \label{eq:WoS_representation_solution_source_part}
            &\E \br*{ \int_0^{\tau^x} h_1\pr{s, x + B_s} \dx s }\\
            &= \frac{1}{d} \E \br*{\sum_{k = 1}^\infty \mdist\pr{x + B_{\tau_{k - 1}^x}}^2 h_1 \pr*{ \tau^x_{k - 1} +  \mdist\pr{x + B_{\tau_{k - 1}^x}}^2 V_k, x +  B_{\tau_{k - 1}^x} + \mdist\pr{x + B_{\tau_{k - 1}^x}} Y_k } }.    
        \end{split}
    \end{equation}
    This proves \eqref{eq:WoS_representation_solution}.
    The proof of \cref{lem:WoS_representation_solution} is thus complete.
\end{proof}

\section{Error analysis for the modified Walk-on-Spheres algorithm} \label{sec:error_mc}

In this section, we establish an error analysis for the estimator \eqref{eq:estimator_intro} as an approximation of expectations of type \eqref{eq:expectation} in the $L^{\infty}$-sense. To this end, we establish in \cref{subsec:error_bias} bias estimates for the Monte Carlo estimator \eqref{eq:estimator_intro}.
In \cref{subsec:error_L_infty} we first present some preparatory regularity properties of the Walk-on-Spheres process as well as the Monte Carlo estimator \eqref{eq:estimator_intro} and conclude with an overall error estimation in the $L^\infty$-sense. The main results are \cref{thm:exp_bound} and \cref{cor:l_infty}.
Throughout this section assume $D \subseteq \R^d$ to be open, bounded, and convex,
let $r \colon \R^d \to [0, \infty)$ satisfy for all $x \in \R^d$ that $r\pr{x} = \inf \cu{ \norm{x - y} \colon y \in \partial D }$,
let $\beta \in (0,1]$, $\delta \in \pr{0,\infty}$,
let $\mdist \colon D \to [0, \infty)$ be a $\pr{\beta, \delta}$-distance on $D$,
let $\tau_{k}^{x}$, $\bar{X}_{k}^{x, j}$, $\bar{\tau}_{k}^{x, j}$, $\zeta$, $Z_{l}^{j}$, $\xi_{l}^{j}$, $\rho_{l}^{j}$, $x \in \ol{D}$, $j,l \in \N$, $k \in \N_0$, be as in \cref{sec:modified_wos},
let $h, h_0, h_1 \colon [0, \infty) \times \ol{D} \to \R$ be measurable and bounded,
let $\mu \colon \cB\pr{[0, \infty) \times \cu{ x \in \R^d \colon \norm{x} \le 1 }} \to \br{0,1}$ be as in \cref{lem:occ_measure}.
For $n \in \N$, $S \subseteq \R^n$, and $F \colon S \to \R$ we denote with $\norm{F}_{\infty}$ the supremum norm of $F$ on its domain $S$, i.e., $\norm{F}_{\infty} = \sup_{s \in S} \abs{F\pr{s}}$.

\subsection{Bias estimates} \label{subsec:error_bias}

\begin{lemma}\label{lem:error_f=0_bias}
    Let $n, M \in \N$, $L_1, L_2 \in \pr{0,\infty}$,
    assume for all $t,s \in [0, \infty)$, $x,y \in \ol{D}$ that
    \begin{equation} \label{eq:error_f=0_bias_h_lip}
        \abs{h \pr{t, x} - h \pr{s, y}} \le L_1 \abs{t - s} + L_2 \norm{x - y},
    \end{equation}
    let $w \colon D \to \R$ satisfy for all $x \in D$ that $w\pr{x} = \E \br*{h \pr{\tau^x, x + B_{\tau^x}}}$,
    and let $\bar{w} = \bar{w}_{\mdist, n, M} \colon D \times \Omega \to \R$ satisfy for all $x \in D$ that
    \begin{equation}
        \bar{w} \pr{x} = \frac{1}{n} \sum_{i = 1}^n h \pr*{ \bar{\tau}^{x, i}_M, \bar{X}^{x, i}_M }.
    \end{equation}
    Then it holds for all $x \in D$ that
    \begin{equation}
        \begin{split} \label{eq:error_f=0_bias}
            \abs*{ w\pr{x} - \E \br*{\bar{w}\pr{x}} } &\le L_1 \diam \pr{D} \delta + L_2 d^{\nicefrac{1}{2}} \diam \pr{D}^{\nicefrac{1}{2}} \delta^{\nicefrac{1}{2}} + 4 \norm{h}_\infty \pr*{1 + \tfrac{\beta^2 \delta^2}{2\diam \pr{D}^2}}^{-M}\!\!\!\!\!.
        \end{split}
    \end{equation}
\end{lemma}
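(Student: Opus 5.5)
By \cref{lem:distribution_WoS_BM}, $\E[\bar w(x)] = \E[h(\tau^x_M, x+B_{\tau^x_M})]$. It therefore suffices to bound
\begin{equation*}
\abs{\E[h(\tau^x,x+B_{\tau^x})] - \E[h(\tau^x_M,x+B_{\tau^x_M})]}.
\end{equation*}
We split according to whether the walk has already entered the boundary layer by step $M$, i.e.\ on the events $\{k^x_\delta \le M\}$ and $\{k^x_\delta > M\}$. On the bad event $\{k^x_\delta > M\}$ we bound the integrand by $2\norm{h}_\infty$. Then \cref{lem:distribution_k_max} and \cref{lem:decay} give a contribution of at most $2\norm{h}_\infty \cdot 2(1+\tfrac{\beta^2\delta^2}{2\diam(D)^2})^{-M}$, which is the last term. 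If $x \notin D_\delta$ then $k^x_\delta=0$ and the bad event is empty; otherwise \cref{lem:decay} applies directly.

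On the good event we use the Lipschitz bound \eqref{eq:error_f=0_bias_h_lip}. This reduces the problem to estimating $L_1\E[(\tau^x-\tau^x_M)\ind_{\{k^x_\delta\le M\}}]$ and $L_2\E[\norm{B_{\tau^x}-B_{\tau^x_M}}\ind_{\{k^x_\delta\le M\}}]$. Note that $\tau^x_k \le \tau^x$ for all $k$, since $\mdist\le r$ keeps every ball inside $D$. The key step is to replace step $M$ by the stopping time $\theta = \tau^x_{k^x_\delta\wedge M}$. Because $\tau^x_M \ge \theta$ on the good event and everything is monotone in time, we get $0\le \tau^x-\tau^x_M \le \tau^x-\theta$ there. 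For the time term, the strong Markov property at $\theta$ and \cref{lem:expected_exit_time_UB} (convexity) give
\begin{equation*}
\E[\tau^x-\theta \mid \cF_\theta] \le r(x+B_\theta)\diam(D).
\end{equation*}
On the good event, $r(x+B_\theta)<\delta$, so the time term is at most $L_1\diam(D)\delta$.

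For the space term, first apply Cauchy–Schwarz/Jensen to get
\begin{equation*}
\E[\norm{B_{\tau^x}-B_{\tau^x_M}}\ind] \le \bigl(\E[\norm{B_{\tau^x}-B_{\tau^x_M}}^2\ind]\bigr)^{1/2}.
\end{equation*}
Then use It\^o's isometry / optional stopping for $\norm{B_t-B_{\tau^x_M}}^2 - d(t-\tau^x_M)$. This is legitimate because $\E[\tau^x]<\infty$ by \cref{lem:expected_exit_time_UB}, and $\tau^x_M$ is a stopping time. It yields
\begin{equation*}
\E[\norm{B_{\tau^x}-B_{\tau^x_M}}^2 \mid \cF_{\tau^x_M}] = d\,\E[\tau^x-\tau^x_M\mid\cF_{\tau^x_M}] \le d\, r(x+B_{\tau^x_M})\diam(D).
\end{equation*}
On the good event, $r(x+B_{\tau^x_M})$ must also be small. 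Here I would avoid needing monotonicity of $r$ along the walk: apply the same identity with $\theta$ in place of $\tau^x_M$, so that
\begin{equation*}
\E[\norm{B_{\tau^x}-B_{\tau^x_M}}^2\ind] \le \E[\norm{B_{\tau^x}-B_\theta}^2\ind].
\end{equation*}
This inequality holds because $\norm{B_{\tau^x}-B_s}^2-d(\tau^x-s)$ has conditional mean zero at every stopping time $s\le\tau^x$, combined with $\tau^x-\tau^x_M\le\tau^x-\theta$, after conditioning on $\cF_\theta$ (the good event is $\cF_\theta$-measurable). The right-hand side is bounded by $d\diam(D)\delta$, giving $L_2 d^{1/2}\diam(D)^{1/2}\delta^{1/2}$.

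The main obstacle is making this conditioning rigorous. One has to check that the good event $\{k^x_\delta\le M\}$ is $\cF_\theta$-measurable, and apply the strong Markov property with \cref{lem:expected_exit_time_UB} evaluated at the random point $x+B_\theta$. Uniform integrability of the stopped martingales is needed, and follows from $\E[\tau^x]<\infty$ together with boundedness of $B$ before exit. Summing the three contributions gives \eqref{eq:error_f=0_bias}.
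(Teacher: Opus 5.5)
Your proposal is correct and takes essentially the same route as the paper. You use the same split on $\{k^x_\delta \le M\}$ versus $\{k^x_\delta > M\}$, \cref{lem:decay} on the bad event, and, on the good event, the Lipschitz bound, the strong Markov property with \cref{lem:expected_exit_time_UB}, and Jensen plus Wald's identity for the spatial term. Using the stopping time $\theta = \tau^x_{k^x_\delta \wedge M}$ instead of the paper's $\tau^x_{k^x_\delta}$, and treating the case $x \notin D_\delta$ separately, are only slightly more careful versions of the same steps.
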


\begin{proof}
    First, observe that \cref{lem:distribution_WoS_BM} shows for all $x \in D$ that
    \begin{equation}
        \begin{split} \label{eq:error_f=0_bias_cases}
            \abs*{ w\pr{x} - \E \br*{\bar{w}\pr{x}} } &= \abs*{\E \br*{ h \pr{\tau^x, x + B_{\tau^x}}} -  \tfrac{1}{n}  \SmallSum{i = 1}{n} \E \br*{ h \pr{\tau_M^{x, i}, \bar{X}^{x, i}_M} } } \\
            &= \abs*{\E \br*{ h \pr{\tau^x, x + B_{\tau^x}} - h \pr{\tau_M^{x}, x + B_{\tau^x_M}  } } } \\
            &\le \abs*{ \E \br*{ \pr*{h \pr{\tau^x, x + B_{\tau^x}} - h \pr{\tau_M^{x}, x + B_{\tau^x_M}  }} \ind_{\cu{M \ge k^x_\delta}} }  } \\
            &\quad + \abs*{ \E \br*{ \pr*{h \pr{\tau^x, x + B_{\tau^x}} - h \pr{\tau_M^{x}, x + B_{\tau^x_M}  }} \ind_{\cu{M < k^x_\delta}} }  }.
        \end{split}
    \end{equation}
    Next, note that combining
    the triangle inequality,
    the fact that $\cu{M < k_\delta^x} \subseteq \cu{M \le k_\delta^x}$,
    and the assumption that $h$ is bounded with \cref{lem:decay} yields for all $x \in D$ that
    \begin{equation}
        \begin{split} \label{eq:error_f=0_bias_M<k}
            &\abs*{ \E \br*{ \pr*{h \pr{\tau^x, x + B_{\tau^x}} - h \pr{\tau_M^{x}, x + B_{\tau^x_M}  }} \ind_{\cu{M < k^x_\delta}} }  }\\
            &\le 2 \norm{h}_{\infty} \E \br{\ind_{\cu{M < k^x_\delta}}} \le 4 \norm{h}_{\infty} \pr*{1 + \tfrac{\beta^2 \delta^2}{2\diam \pr{D}^2}}^{-M}.
        \end{split}
    \end{equation}
    Additionally, note that the assumption that for all $t,s \in [0, \infty)$, $x,y \in D$ it holds that $\abs{ h \pr{t, x} - h \pr{s, y}} \le L_1 \abs{t - s} + L_2 \norm{x - y}$,
    the fact that $\tau^x \ge \tau_M^x$,
    Jensen's inequality,
    the strong Markov property,
    the fact that $\cu{M\ge k_\delta^x} \subseteq \cu{\tau^x_M \ge \tau^x_{k_\delta^x}}$
    and \cref{lem:expected_exit_time_UB},
    imply for all $x \in D$ that
    \begin{equation}
        \begin{split}\label{eq:error_f=0_bias_M>=k}
            &\abs*{ \E \br*{ \pr*{h \pr{\tau^x, x + B_{\tau^x}} - h \pr{\tau_M^{x}, x + B_{\tau^x_M}  }} \ind_{\cu{M \ge k^x_\delta}} }  }\\
            &\le L_1 \E \br*{ \abs{\tau^x - \tau_M^x} \ind_{\cu{M \ge k^x_\delta}}  } + L_2 \E \br*{ \norm{B_{\tau^x} - B_{\tau^x_M} } \ind_{\cu{M \ge k^x_\delta}} } \\
            &\le L_1 \E \br*{ \abs{\tau^x - \tau_M^x} \ind_{\cu{M \ge k^x_\delta}}  } + L_2 \E \br*{ \norm{B_{\tau^x} - B_{\tau^x_M} } \ind_{\cu{\tau^x_M \ge \tau^x_{k^x_\delta}}} } \\
            &= L_1 \E \br*{ \abs{\tau^x - \tau_M^x} \ind_{\cu{M \ge k^x_\delta}}  } + L_2 \E \br*{ \E \br*{\norm{B_{\tau^x} - B_{\tau^x_M} } \ind_{\cu{\tau^x_M \ge \tau^x_{k^x_\delta}}} \mid \mathcal{F}_{\tau^x_M} }} \\
            &= L_1 \E \br*{ \abs{\tau^x - \tau_M^x} \ind_{\cu{M \ge k^x_\delta}}  } + L_2 \E \br*{ \E \br*{\norm{B_{\tau^x} - B_{\tau^x_M} }  \mid \mathcal{F}_{\tau^x_M} } \ind_{\cu{\tau^x_M \ge \tau^x_{k^x_\delta}}} } \\
            &\le L_1 \E \br*{ \abs{\tau^x - \tau_M^x} \ind_{\cu{M \ge k^x_\delta}}  } + L_2 d^{\nicefrac{1}{2}} \pr*{\E \br*{ \E \br*{\abs{\tau^x - \tau_M^x} \mid \mathcal{F}_{\tau^x_M} } \ind_{\cu{\tau^x_M \ge \tau^x_{k^x_\delta}}} }}^{\nicefrac{1}{2}} \\
            &\le L_1 \E \br[\big]{ \abs{\tau^x - \tau^x_{k_\delta^x}} } + L_2 d^{\nicefrac{1}{2}} \pr*{ \E \br[\big]{\abs{\tau^x - \tau^x_{k_\delta^x}}} }^{\!\!\nicefrac{1}{2}} \\
            &\le L_1 \E \br*{ \E \br[\Big]{ \tau^{x + B_{ \tau^{x}_{k^{x}_{\delta}} }}  \mid \mathcal{F}_{\tau^{x}_{k^{x}_{\delta}}}  } } + L_2 d^{\nicefrac{1}{2}} \pr*{\E \br*{ \E \br[\Big]{ \tau^{x + B_{ \tau^{x}_{k^{x}_{\delta}} }}  \mid \mathcal{F}_{\tau^{x}_{k^{x}_{\delta}}} } }}^{\!\!\nicefrac{1}{2}} \\
            &\le  L_1 \diam \pr{D} \delta + L_2 d^{\nicefrac{1}{2}} \diam \pr{D}^{\nicefrac{1}{2}} \delta^{\nicefrac{1}{2}}.
        \end{split}
    \end{equation}
    Combining this with \eqref{eq:error_f=0_bias_M<k} proves \eqref{eq:error_f=0_bias}.
\end{proof}

\begin{lemma} \label{lem:error_g=0}
    Let $n, M \in \N$,
    let $w \colon D \to \R$ satisfy for all $x \in D$ that
    \begin{equation} 
        w \pr{x} = \E \br*{ \int_0^{\tau^x} h\pr{s, x + B_s} \dx s },
    \end{equation}
    let $\pr{V_k^i, Y_k^i} \sim \mu$, $i, k \in \N$, be i.i.d.,
    assume $\pr{V_k^i, Y_k^i}$, $\rho_k^{i}$, $Z_k^{i}$, $i, k \in \N$, and $B$ are independent,
    and let $\bar{w} = \bar{w}_{\mdist, n, M} \colon D \times \Omega \to \R$ satisfy for all $x \in D$ that
    \begin{equation} 
        \bar{w}\pr{x} = \frac{1}{n} \sum_{i = 1}^n \frac{1}{d} \sum_{k = 1}^M \mdist \pr{\bar{X}^{x, i}_{k - 1}}^2 h \pr*{ \bar{\tau}^{x, i}_{k - 1} + \mdist \pr{\bar{X}^{x, i}_{k - 1}}^2 V_k^i, \bar{X}^{x, i}_{k - 1} + \mdist \pr{\bar{X}^{x, i}_{k - 1}} Y_k^i }.
    \end{equation}
    Then
    it holds for all $x \in D$ that
    \begin{equation} \label{eq:error_g=0_bias}
        \begin{split}
            \hspace{-0.7cm} \abs*{ w\pr{x} - \E \br*{\bar{w}\pr{x}} } &\le \norm{h}_{\infty} \diam \pr{D} \pr*{\tfrac{2 \diam\pr{D}}{d} \pr*{1 + \tfrac{\beta^2 \delta^2}{2 \diam \pr{D}^2}}^{-M} \!\! + \delta  }.
        \end{split}
    \end{equation}
\end{lemma}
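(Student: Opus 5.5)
We first identify the expectation of $\bar w(x)$ with the truncated Walk-on-Spheres series. Since the $n$ summands are identically distributed, $\E[\bar w(x)]$ equals the expectation of the $i=1$ summand. By the distributional identity of the process $(\bar\tau^{x,1}_k,\bar X^{x,1}_k)_{k}$ and $(\tau^x_k,x+B_{\tau^x_k})_k$ (the joint version underlying \cref{lem:distribution_WoS_BM,lem:distribution_k_max}), and the independence of $(V^1_k,Y^1_k)$ from everything else, we get
\begin{equation*}
\E[\bar w(x)]=\frac1d\sum_{k=1}^M \E\Bigl[\mdist(x+B_{\tau^x_{k-1}})^2 h\bigl(\tau^x_{k-1}+\mdist(x+B_{\tau^x_{k-1}})^2V_k,\,x+B_{\tau^x_{k-1}}+\mdist(x+B_{\tau^x_{k-1}})Y_k\bigr)\Bigr].
\end{equation*}
Repeating the conditioning and scaling computation \eqref{eq:WoS_representation_solution_conditioning} together with \cref{lem:occ_measure}, each summand equals $\E[\int_{\tau^x_{k-1}}^{\tau^x_k}h(s,x+B_s)\dx s]$. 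Hence $\E[\bar w(x)]=\E[\int_0^{\tau^x_M}h(s,x+B_s)\dx s]$. (This identity uses only the strong Markov property and scaling, so $\delta>0$ is harmless here; note $\tau^x_M\le\tau^x$ since $\mdist\le r$.) Consequently
\begin{equation*}
\abs{w(x)-\E[\bar w(x)]}\le \norm{h}_\infty\,\E[\tau^x-\tau^x_M].
\end{equation*}

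Next we split $\E[\tau^x-\tau^x_M]$ according to the events $\{M\ge k^x_\delta\}$ and $\{M<k^x_\delta\}$. On $\{M\ge k^x_\delta\}$ we have $\tau^x-\tau^x_M\le \tau^x-\tau^x_{k^x_\delta}$. By the strong Markov property at $\tau^x_{k^x_\delta}$ and \cref{lem:expected_exit_time_UB}, the conditional expectation is at most $r(x+B_{\tau^x_{k^x_\delta}})\diam(D)<\delta\diam(D)$, exactly as in the last steps of \eqref{eq:error_f=0_bias_M>=k}. This gives the term $\norm{h}_\infty\diam(D)\delta$.

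On $\{M<k^x_\delta\}$, which is $\cF_{\tau^x_M}$-measurable, we condition on $\cF_{\tau^x_M}$. The strong Markov property and \cref{lem:expected_exit_time_UB} give $\E[\tau^x-\tau^x_M\mid\cF_{\tau^x_M}]\le r(x+B_{\tau^x_M})\diam(D)$. To match the stated constant $\tfrac{2\diam(D)^2}{d}$ rather than $\diam(D)^2$, we instead use the cruder but dimension-aware bound $\E[\tau^y]\le \E[T_{\diam(D)}]=\diam(D)^2/d$, valid because $D\subseteq \overline{B}(y,\diam(D))$. Combining this with $\mathbb P(M<k^x_\delta)\le\mathbb P(\bar k^{x,1}_\delta\ge M)\le 2(1+\tfrac{\beta^2\delta^2}{2\diam(D)^2})^{-M}$ from \cref{lem:distribution_k_max,lem:decay} yields $\norm{h}_\infty\tfrac{2\diam(D)^2}{d}(1+\cdots)^{-M}$. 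Summing the two contributions gives \eqref{eq:error_g=0_bias}.

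The main obstacle is the first step: rigorously transferring the joint law, including times and positions across all $M$ steps, from the simulated chain to the Brownian one, and justifying the Fubini and conditioning steps. Boundedness of $h$ together with $\E[\tau^x]<\infty$ makes all the interchanges legitimate.
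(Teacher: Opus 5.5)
Your proof is correct and follows essentially the same route as the paper. You reduce to $\norm{h}_\infty\E[\tau^x-\tau^x_M]$ via the strong Markov/scaling/occupation-measure identity for the truncated series. You then split on $\{M\ge k^x_\delta\}$, using \cref{lem:expected_exit_time_UB} at $\tau^x_{k^x_\delta}$, and on $\{M<k^x_\delta\}$, using $\E[\tau^y]\le \diam(D)^2/d$ together with \cref{lem:distribution_k_max,lem:decay}. The only cosmetic difference is that you justify $\E[\tau^y]\le\diam(D)^2/d$ directly through the domination $\tau^y\le T_{\diam(D)}$, whereas the paper cites a corollary of Beznea et al.\ for this bound.
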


\begin{proof}
    First, observe that \cref{lem:distribution_WoS_BM,lem:occ_measure},
    the techniques presented in the proof of \cref{lem:WoS_representation_solution},
    and the assumption that $\pr{V_k^1, Y_k^1}$, $k \in \N$, are i.i.d.\ and independent from $B$ 
    establish that for all $x \in D$ it holds that
    \begin{equation}
        \begin{split} \label{eq:error_g=0_solution}
            w\pr{x} &= \E \br*{ \int_{\tau^x_M}^{\tau^x} h \pr{s, x + B_s} \dx s } + \E \br*{ \int_0^{\tau_M^x} h\pr{s, x + B_s} \dx s } \\
            &= \E \br*{ \int_{\tau^x_M}^{\tau^x} h \pr{s, x + B_s} \dx s  } \\
            &\quad+ \frac{1}{d} \sum_{k = 1}^M \E \br*{ \mdist \pr{x + B_{\tau^x_{k - 1}}}^2 h \pr*{ \tau^x_{k - 1} + \mdist \pr{x + B_{\tau^x_{k - 1}}}^2 V_k^1, x + B_{\tau^x_{k - 1}} + \mdist \pr{x + B_{\tau^x_{k - 1}}} Y_k^1 } } \\
            &= \E \br*{ \int_{\tau^x_M}^{\tau^x} h \pr{s, x + B_s} \dx s  } \\
            &\quad+ \frac{1}{d} \sum_{k = 1}^M \E \br*{ \mdist \pr{\bar{X}^{x, 1}_{k - 1}}^2 h \pr*{ \bar{\tau}^{x, 1}_{k - 1} + \mdist \pr{\bar{X}^{x, 1}_{k - 1}}^2 V_k^1, \bar{X}^{x, 1}_{k - 1} + \mdist \pr{\bar{X}^{x, 1}_{k - 1}} Y_k^1 } }.
        \end{split}
    \end{equation}
    Next, note that \cref{lem:distribution_WoS_BM}
    and the assumption that $\pr{V_k^{i}, Y_k^{i}}$, $i,k \in \N$, are i.i.d.\ and independent from $\rho_k^{i}$, $Z_k^{i}$, $i, k \in \N$, and $B$
    imply for all $x \in D$ that
    \begin{equation}
        \begin{split} \label{eq:error_g=0_mc_expectation}
            \E \br*{\bar{w} \pr{x}} &= \frac{1}{nd} \sum_{i = 1}^n \sum_{k = 1}^M \E \br*{ \mdist \pr{ \bar{X}^{x, i}_{k - 1} }^2 h \pr*{ \bar{\tau}^{x, i}_{k - 1} + \mdist \pr{\bar{X}^{x, i}_{k - 1}}^2 V_k^i, \bar{X}^{x, i}_{k - 1} + \mdist \pr{\bar{X}^{x, i}_{k - 1}} Y_k^i } } \\
            &= \frac{1}{d} \sum_{k = 1}^M \E \br*{ \mdist \pr{\bar{X}^{x, 1}_{k - 1}}^2 h \pr*{ \bar{\tau}^{x, 1}_{k - 1} + \mdist \pr{\bar{X}^{x, 1}_{k - 1}}^2 V_k^1, \bar{X}^{x, 1}_{k - 1} + \mdist \pr{\bar{X}^{x, 1}_{k - 1}} Y_k^1 } }.
        \end{split}
    \end{equation}
    Combining \eqref{eq:error_g=0_solution},
    \eqref{eq:error_g=0_mc_expectation},
    and the triangle inequality
    with the assumption that $h$ is bounded yields for all $x \in D$ that
    \begin{equation}
        \begin{split} \label{eq:error_g=0_bias_1}
            \abs*{ w\pr{x} - \E \br*{\bar{w}} } &= \abs*{ \E \br*{ \int_{\tau^x_M}^{\tau^x} h \pr{s, x + B_s} \dx s } } \le \norm{h}_{\infty} \E \br*{ \tau^x - \tau_M^x }.
        \end{split}
    \end{equation}
    Next, observe that 
    the strong Markov property,
    \cite[Corollary~2.3]{beznea2022monte},
    and \cref{lem:decay}
    demonstrate for all $x \in D$ that
    \begin{equation}
        \begin{split}\label{eq:error_g=0_bias_M<K}
            \E \br*{\pr{\tau^x - \tau_M^x} \ind_{ \cu{ M < k_\delta^x }}} &= \E \br*{ \E \br*{\tau^{ x + B_{\tau_M^x} } \mid \cF_{\tau_M^x} } \ind_{ \cu{ M < k_\delta^x }} } \\
            &\le \tfrac{\diam\pr{D}^2}{d} \mathbb{P} \br*{ k_\delta^x \ge M } \le \tfrac{2\diam \pr{D}^2 }{d} \pr*{1 + \tfrac{\beta^2 \delta^2}{2 \diam \pr{D}^2}}^{- M}.
        \end{split}
    \end{equation}
    Moreover, note that 
    the strong Markov property,
    the fact that $\tau^x \ge \tau^x_M$,
    and \cref{lem:expected_exit_time_UB}
    imply for all $x \in D$ that
    \begin{equation}
        \begin{split}
            \E \br*{\pr{\tau^x - \tau^x_M} \ind_{\cu{M \ge k_\delta^x}}} &\le \E \br*{\tau^x - \tau^x_{k_\delta^x}} = \E \br*{ \E \br[\Big]{ \tau^{x + B_{\tau^x_{k^x_\delta}}} \mid \cF_{\tau^x_{k_\delta^x}} }  } \\
            &\le \E \br*{ r \pr*{x + B_{\tau^x_{k^x_\delta}}} \diam \pr{D} } \le \delta \diam \pr{D}.
        \end{split}
    \end{equation}
    Combining this and \eqref{eq:error_g=0_bias_M<K} with \eqref{eq:error_g=0_bias_1} proves \eqref{eq:error_g=0_bias}.
    The proof of \cref{lem:error_g=0} is thus complete.
\end{proof}

\subsection{Global $L^\infty$-error analysis}
\label{subsec:error_L_infty}

\begin{lemma} \label{lem:hoeffding_MC}
    Let $n, M \in \N$,
    let $\pr{V_{k}^{i}, Y_{k}^{i}} \sim \mu$, $i, k \in \N$, be i.i.d.
    assume $\pr{V_{k}^{i}, Y_{k}^{i}}$, $\rho_{k}^{i}$, $Z_{k}^{i}$, $i, k \in \N$, are independent,
    and let $\bar{w} = \bar{w}_{\mdist, n, M} \colon D \times \Omega \to \R$ satisfy for all $x \in D$ that
    \begin{equation}
        \begin{split}
            \bar{w}\pr{x} &= \frac{1}{n} \SmallSum{i = 1}{n} \bigg[ h_0\pr{\bar{\tau}^{x, i}_M, \bar{X}^{x, i}_M} + \frac{1}{d} \SmallSum{k = 1}{M} \mdist\pr{ \bar{X}^{x, i}_{k - 1} }^2 h_1 \pr*{ \bar{\tau}^{x, i}_{k - 1} + \mdist\pr{ \bar{X}^{x, i}_{k - 1} }^2 V_k^{i}, \bar{X}^{x, i}_{k - 1} + \mdist\pr{ \bar{X}^{x, i}_{k - 1} } Y_k^{i} } \bigg].
        \end{split}
    \end{equation}
    Then it holds for all $x \in D$, $t \ge 0$ that 
    \begin{equation}\label{eq:hoeffding_MC}
        \mathbb{P} \br*{ \abs*{\bar{w}\pr{x} - \E\br{ \bar{w} \pr{x}} } \ge t } \le 2 \exp\pr*{- \frac{nt^2}{ 2 \pr[\big]{\norm{h_0}_{\infty} + \tfrac{M \diam \pr{D}^2 \norm{h_1}_{\infty}  }{d}}^2 }}.
    \end{equation}
\end{lemma}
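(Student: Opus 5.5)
Fix $x \in D$ and write $\bar{w}(x) = \frac{1}{n}\sum_{i=1}^n A_i$, where
\begin{equation*}
A_i = h_0\pr{\bar{\tau}^{x, i}_M, \bar{X}^{x, i}_M} + \frac{1}{d} \SmallSum{k = 1}{M} \mdist\pr{ \bar{X}^{x, i}_{k - 1} }^2 h_1 \pr*{ \bar{\tau}^{x, i}_{k - 1} + \mdist\pr{ \bar{X}^{x, i}_{k - 1} }^2 V_k^{i}, \bar{X}^{x, i}_{k - 1} + \mdist\pr{ \bar{X}^{x, i}_{k - 1} } Y_k^{i} }.
\end{equation*}
The plan is to apply Hoeffding's inequality to the i.i.d.\ bounded random variables $A_1,\dots,A_n$. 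This rests on two facts: independence with a common law, and an almost sure bound on $\abs{A_i}$.

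\textbf{Independence and identical distribution.} For each $i$, $A_i$ is a measurable function $F$ (the same $F$ for all $i$, depending only on $x$, $M$, $\mdist$, $h_0$, $h_1$) of the family $\pr{(Z_k^i,\rho_k^i,V_k^i,Y_k^i)}_{k\in\{1,\dots,M\}}$. The construction gives $\bar{X}^{x,i}_k$ recursively from $Z^i_1,\dots,Z^i_k$, and $\bar{\tau}^{x,i}_k$ recursively from $\bar X^{x,i}_{0},\dots,\bar X^{x,i}_{k-1}$ and $\rho_1^i,\dots,\rho_k^i$. By assumption the families $\pr{Z_k^i}$, $\pr{\rho_k^i}$, $\pr{V_k^i,Y_k^i}$ are independent and each consists of i.i.d.\ components. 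Hence the blocks indexed by $i$ are i.i.d., and so are the $A_i$. Here I take $Z^j_k$, $\rho^j_k$ to be i.i.d.\ across $j,k$, as intended in \cref{sec:modified_wos}.

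\textbf{Almost sure bound.} Since $\mdist$ is a $\pr{\beta,\delta}$-distance, $0 \le \mdist(y) \le r(y)$ for all $y\in D$. For $y \in D$ we also have $r(y) \le \diam(D)$, since boundedness gives a boundary point within distance $\diam(D)$. The walk stays in $\ol{D}$ because $\bar X^{x,i}_{k}$ lies on the sphere of radius $\mdist(\bar X^{x,i}_{k-1})\le r(\bar X^{x,i}_{k-1})$. If the walk hits $\partial D$, then $\mdist$ and $r$ vanish there; this may need a remark extending $\mdist$ by $0$ on $\partial D$, consistent with the domain of the $\bar X$'s. Each summand therefore satisfies $\mdist(\bar X^{x,i}_{k-1})^2\abs{h_1(\cdots)} \le \diam(D)^2\norm{h_1}_\infty$, and the argument of $h_1$ lies in $[0,\infty)\times\ol{D}$ because $\norm{Y_k^i}\le 1$. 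This yields
\begin{equation*}
\abs{A_i} \le \norm{h_0}_\infty + \tfrac{M\diam(D)^2\norm{h_1}_\infty}{d} =: C,
\end{equation*}
so $A_i$ takes values in $[-C,C]$ almost surely.

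\textbf{Conclusion.} Hoeffding's inequality for i.i.d.\ variables in an interval of length $2C$ gives
\begin{equation*}
\mathbb{P}\br*{\abs{\bar w(x) - \E[\bar w(x)]}\ge t} \le 2\exp\pr*{-\tfrac{2n^2t^2}{n(2C)^2}} = 2\exp\pr*{-\tfrac{nt^2}{2C^2}},
\end{equation*}
which is exactly \eqref{eq:hoeffding_MC}.

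The only delicate point is the bookkeeping behind independence across $i$ and the well-definedness of $\mdist$ along the walk. The bound itself is routine, so I expect no real obstacle.
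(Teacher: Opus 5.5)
Your proposal is correct and follows the paper's proof essentially verbatim: you write $\bar w(x)$ as the average of the $n$ independent summands, bound each by $\norm{h_0}_\infty + M\diam(D)^2\norm{h_1}_\infty/d$ using $\mdist \le \diam(D)$, and apply Hoeffding's inequality with interval length $2C$. Hoeffding needs only independence and boundedness of the summands, so your argument for their identical distribution is harmless but not needed.
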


\begin{proof}
    Let $\bar{w}_i = \bar{w}_{i, \delta, n, M} \colon D \times \Omega \to \R$, $i \in \N \cap \br{1,n}$, satisfy for all $i \in \N \cap \br{1,n}$, $x \in D$ that
    \begin{equation}
        \bar{w}_i\pr{x} =  h_0\pr{\bar{\tau}^{x, i}_M, \bar{X}^{x, i}_M} + \frac{1}{d} \sum_{k = 1}^M \mdist\pr{ \bar{X}^{x, i}_{k - 1} }^2 h_1 \pr*{ \bar{\tau}^{x, i}_{k - 1} + \mdist\pr{ \bar{X}^{x, i}_{k - 1} }^2 V_k^{i}, \bar{X}^{x, i}_{k - 1} + \mdist\pr{ \bar{X}^{x, i}_{k - 1} } Y_k^{i} }.
    \end{equation}
    Observe that for all $x \in D$ it holds that $\pr{\bar{w}_i}_{i \in \N \cap \br{1,n}}$ are independent and $\bar{w}\pr{x} = \tfrac{1}{n} \sum_{i = 1}^{n} \bar{w}_i\pr{x}$.
    Moreover, note that the assumption that for all $x \in D$ it holds that $\mdist\pr{x} \le \diam \pr{D}$
    and the assumption that $h_i$, $i \in \cu{1,2}$, is bounded
    ensure for all $i \in \N \cap \br{1,n}$, $x \in D$ it holds that
    \begin{equation}
        \abs{\bar{w}_i\pr{x}} \le \norm{h_0}_{\infty} + \frac{M \diam \pr{D}^2 \norm{h_1}_{\infty}}{d}.
    \end{equation}
    This and Hoeffding's inequality show for all $x \in D$, $t \ge 0$ that
    \begin{equation}
        \begin{split}
            \mathbb{P} \br*{ \abs*{\bar{w}\pr{x} - \E\br{ \bar{w} \pr{x}} } \ge t } &= \mathbb{P} \br*{ \abs*{ \SmallSum{i = 1}{n} \bar{w}_i \pr{x} - \E \br*{ \SmallSum{i = 1}{n} \bar{w}_i\pr{x}}} \ge nt } \\
            &\le 2 \exp\pr*{- \frac{nt^2}{ 2 \pr[\big]{\norm{h_0}_{\infty} + \tfrac{M \diam \pr{D}^2 \norm{h_1}_{\infty}  }{d}}^2 }}.
        \end{split}
    \end{equation}
    This proves \eqref{eq:hoeffding_MC}.
    The proof of \cref{lem:hoeffding_MC} is thus complete.
\end{proof}

\begin{lemma}
    \label{lem:lipschitz_wos_processes}
    Let $L \in [0, \infty)$ and 
    assume for all $x,y \in D$ that $\abs{ \mdist \pr{x} - \mdist\pr{y}} \le L \norm{x - y}$.
    Then
    \begin{enumerate}[label=(\roman*)]
        \item \label{it:lipschitz_wos_processes_space} it holds for all $k \in \N_0$, $i \in \N$, $x,y \in D$ that $\norm{\bar{X}^{x, i}_{k} - \bar{X}^{y, i}_{k}} \le \pr{1 + L}^k \norm{x - y}$ and
        \item \label{it:lipschitz_wos_processes_time} it holds for all $k \in \N_0$, $i \in \N$, $x,y \in D$ that
        \begin{equation} \label{eq:lipschitz_wos_processes_time}
            \abs{\bar{\tau}^{x, i}_k - \bar{\tau}^{y, i}_k} \le 2 \diam \pr{D} L \br*{\SmallSum{j = 1}{k} \pr{1 + L}^{j - 1} \rho_j^{i}} \norm{x - y}.
        \end{equation}
    \end{enumerate}
\end{lemma}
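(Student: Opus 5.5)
Both items follow by induction on $k$, using the recursions defining $\bar X^{x,i}_k$ and $\bar\tau^{x,i}_k$. The crucial point is that the same random variables $\xi^i_k$ and $\rho^i_k$ drive the processes started in $x$ and in $y$. Hence the difference of the two processes is controlled pathwise by the Lipschitz property of $\mdist$, with no probabilistic argument needed.

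For \cref{it:lipschitz_wos_processes_space}, the case $k=0$ is trivial since $\bar X^{x,i}_0 = x$ and $\bar X^{y,i}_0 = y$. For the induction step, the triangle inequality and $\norm{\xi^i_k}=1$ give
\begin{equation*}
\norm{\bar X^{x,i}_k-\bar X^{y,i}_k}\le \norm{\bar X^{x,i}_{k-1}-\bar X^{y,i}_{k-1}}+\abs{\mdist(\bar X^{x,i}_{k-1})-\mdist(\bar X^{y,i}_{k-1})}\le (1+L)\norm{\bar X^{x,i}_{k-1}-\bar X^{y,i}_{k-1}}.
\end{equation*}
Applying the induction hypothesis then yields the bound $(1+L)^k\norm{x-y}$. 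Strictly speaking, the Lipschitz assumption is stated for points of $D$, and the walk stays in $\ol D$ because $\mdist \le r$. If a point lands on $\partial D$ (where $\mdist$ is not defined), I would either extend $\mdist$ by continuity to $\ol D$ or note that this happens only on a null set. I do not expect this to be a genuine obstacle.

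For \cref{it:lipschitz_wos_processes_time}, I would telescope the time recursion:
\begin{equation*}
\abs{\bar\tau^{x,i}_k-\bar\tau^{y,i}_k}\le \sum_{j=1}^k \abs{\mdist(\bar X^{x,i}_{j-1})^2-\mdist(\bar X^{y,i}_{j-1})^2}\,\rho^i_j .
\end{equation*}
Next, factor $a^2-b^2=(a+b)(a-b)$. Since $\mdist\le r\le \diam(D)$ on $D$ (from the $(\beta,\delta)$-distance property $\mdist(y)\le r(y)$), each sum $a+b$ is at most $2\diam(D)$. The difference $a-b$ is bounded by $L\norm{\bar X^{x,i}_{j-1}-\bar X^{y,i}_{j-1}}\le L(1+L)^{j-1}\norm{x-y}$ by item (i). Inserting these bounds gives exactly \eqref{eq:lipschitz_wos_processes_time}.

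The only mildly delicate step is justifying $\mdist\le\diam(D)$ and handling the boundary issue mentioned above; everything else is direct algebra.
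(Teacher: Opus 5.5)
Your proof is correct and is essentially the paper's argument: the same pathwise induction for (i), and for (ii) your telescoped sum is just the paper's induction unrolled, using the same ingredients ($\mdist\le\diam(D)$, the Lipschitz property of $\mdist$, and item (i)). Of your two remedies for the boundary technicality, which the paper leaves implicit, keep the Lipschitz extension of $\mdist$ to $\ol D$ (it vanishes on $\partial D$ because $\mdist\le r$); drop the null-set argument, since landing on $\partial D$ need not be a null event (e.g.\ $\mdist=r$ on a ball with the walk started at its center).
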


\begin{proof}
    First, we prove \cref{it:lipschitz_wos_processes_space} via induction on $k \in \N_0$.
    Note that for all $i \in \N$, $x, y \in D$ it holds that $\norm{X_0^{x, i} - X_0^{y, i}} = \norm{x - y}$. This proves \cref{it:lipschitz_wos_processes_space} for the base case $k = 0$.
    Let $k \in \N$ and assume \cref{it:lipschitz_wos_processes_space} holds true for all $l \in  \N_0 \cap [0, k -1]$.
    This, the assumption that for all $x, y \in D$ it holds that $\abs{\mdist\pr{x} - \mdist \pr{y}} \le L \norm{x - y}$, and the triangle inequality show for all $i \in \N$, $x, y \in D$ that 
    \begin{equation}
        \begin{split}
            \norm{\bar{X}^{x, i}_k - \bar{X}^{y, i}_k} & \le \norm{\bar{X}^{x, i}_{k - 1} - \bar{X}^{y, i}_{k - 1}} + \norm{\xi_k^{i}} \abs{ \mdist\pr{\bar{X}^{x, i}_{k - 1}} - \mdist\pr{\bar{X}^{y, i}_{k - 1}} } \\
            &\le \pr{1 + L} \norm{\bar{X}^{x, i}_{k - 1} - \bar{X}^{y, i}_{k - 1}} \le \pr{1 + L}^k \norm{x - y}.
        \end{split}
    \end{equation}
    This proves that \cref{it:lipschitz_wos_processes_space} holds true for all $k \in \N_0$.
    Next, we prove \cref{it:lipschitz_wos_processes_time} via induction on $k \in \N_0$. 
    Observe that for all $i \in \N$, $x, y \in D$ it holds that $\abs{\bar{\tau}^{x, i}_0 - \bar{\tau}^{y, i}_0} = 0$. This proves \cref{it:lipschitz_wos_processes_time} for the base case $k = 0$.
    Let $k \in \N$ and assume \cref{it:lipschitz_wos_processes_time} holds true for all $l \in \N_0 \cap \br{0, k - 1}$.
    This, 
    the assumption that for all $x, y \in D$ it holds that $\abs{ \mdist \pr{x} - \mdist\pr{y}} \le L \norm{x - y}$,
    the fact that for all $x \in D$ it holds that $\mdist \pr{x} \in \br{0, \diam \pr{D}}$,
    and \cref{it:lipschitz_wos_processes_space}
    demonstrate for all $i \in \N$, $x, y \in D$ that
    \begin{align}
        \abs{\bar{\tau }^{x, i}_k - \bar{\tau}^{y, i}_k} &\le \abs{\bar{\tau}^{x, i}_{k - 1} - \bar{\tau}^{y, i}_{k - 1}} + \abs{\rho_{k}^{i}} \abs{\mdist \pr{ \bar{X}^{x, i}_{k - 1} }^2 - \mdist \pr{ \bar{X}^{y, i}_{k - 1} }^2} \nonumber \\
        &\le 2 \diam \pr{D} L \br*{\SmallSum{j = 1}{k - 1} \pr{1 + L}^{j - 1} \rho_{j}^{i} } \norm{x - y}  + 2 \diam \pr{D} L \rho_k^{i} \norm{ \bar{X}^{x, i}_{k - 1} - \bar{X}^{x, i}_{k - 1}  } \nonumber \\
        &\le 2 \diam \pr{D} L \br*{ \pr{1 + L}^{k - 1} \rho_k^{i} + \SmallSum{j = 1}{k - 1} \pr{1 + L}^{j - 1} \rho_{j}^{i} } \norm{x - y}.
    \end{align}
    This proves that \cref{it:lipschitz_wos_processes_time} holds true for all $k \in \N_0$.
    The proof of \cref{lem:lipschitz_wos_processes} is thus complete.
\end{proof}

\begin{lemma} \label{lem:occ_measure_expectation_time_component}
    Let $(V, Y) \sim \mu$.
    Then it holds that $\E \br{V} = \frac{d + 4}{2d \pr{d + 2}}$.
\end{lemma}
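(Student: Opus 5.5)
By \cref{lem:occ_measure}~\cref{it:occ_measure_integral} applied to the nonnegative measurable function $(s,y) \mapsto s$, we have
\begin{equation*}
    \E\br{V} = \int_{[0,\infty)\times U} s \, \mu\pr{\dx (s,y)} = d\, \E\br*{\int_0^\zeta s \dx s} = \tfrac{d}{2}\E\br{\zeta^2}.
\end{equation*}
The claim therefore reduces to showing $\E\br{\zeta^2} = \frac{d+4}{d^2(d+2)}$, the second moment of the first exit time of Brownian motion from the unit ball.

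To compute $\E\br*{\int_0^\zeta s\dx s}$ we first remove the explicit time dependence. Let $g(x) = \frac{1-\norm{x}^2}{d}$, so that $\frac12\Delta g = -1$ and $g = 0$ on the unit sphere. Applying It\^o's formula to $t g(B_t)$ gives
\begin{equation*}
    \dx\pr{t g(B_t)} = \pr{g(B_t) - t}\dx t + t\langle \nabla g(B_t), \dx B_t\rangle .
\end{equation*}
Stop at $t\wedge\zeta$ and take expectations; the stochastic integral is a true martingale because its integrand is bounded up to time $t\wedge\zeta$. Now let $t\to\infty$. The relevant bounds are that $\abs{t g(B_t)}\le \zeta/d$ on $[0,\zeta]$ and that $\E\br{\zeta^2}<\infty$, which follows from \cref{lem:mgf_exit_time_BM} since $\zeta$ has exponential moments. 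Dominated and monotone convergence then give
\begin{equation*}
    \E\br*{\int_0^\zeta s\dx s} = \E\br*{\int_0^\zeta g(B_s)\dx s}.
\end{equation*}

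Next we evaluate the right-hand side with a polynomial solution of the Poisson problem $\frac12\Delta w = -g$ in the ball, $w=0$ on the sphere. Try $w(x) = a(1-\norm{x}^2) + b(1-\norm{x}^4)$. Using $\Delta\norm{x}^2 = 2d$ and $\Delta\norm{x}^4 = 4(d+2)\norm{x}^2$, matching coefficients yields $a = \frac{1}{d^2}$ and $b = -\frac{1}{2d(d+2)}$. Applying It\^o's formula and optional stopping to $w(B_{t\wedge\zeta})$ again, justified by boundedness of $\nabla w$ on the ball and $\E\br{\zeta}<\infty$, gives
\begin{equation*}
    \E\br*{\int_0^\zeta g(B_s)\dx s} = w(0) = \frac{1}{d^2} - \frac{1}{2d(d+2)} = \frac{d+4}{2d^2(d+2)}.
\end{equation*}
Multiplying by $d$ gives $\E\br{V} = \frac{d+4}{2d(d+2)}$.

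The only genuinely delicate step is the limit $t\to\infty$ in the first It\^o argument, which needs integrability of $\zeta^2$. I would handle it either by citing \cref{lem:mgf_exit_time_BM} (any $\gamma<d$ works) or by monotone convergence applied directly to the nonnegative quantities $\int_0^{t\wedge\zeta} s\dx s$ and $\int_0^{t\wedge\zeta} g(B_s)\dx s$. The boundary term $t g(B_{t\wedge\zeta})$ vanishes in the limit by dominated convergence, using the bound $\zeta/d$.
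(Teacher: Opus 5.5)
Your proof is correct and is essentially the paper's argument. After multiplying by $d$, your identity for $t\,g(B_t)$ with $g(x)=\frac{1-\norm{x}^2}{d}$ is exactly the paper's product-rule identity $\E\br{\zeta}-\frac{d}{2}\E\br{\zeta^2}-\E\br*{\int_0^\zeta\norm{B_s}^2\dx s}=0$, and your Poisson solution $w$ packages the paper's $\norm{B_t}^4$ identity $2(d+2)\E\br*{\int_0^\zeta\norm{B_s}^2\dx s}=1$ together with $\E\br{\zeta}=\frac{1}{d}$ (which the paper cites rather than rederives), so the difference is only organizational.
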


\begin{mproof}{\cref{lem:occ_measure_expectation_time_component}}
    First, note that $\E \br{ \zeta} = \frac{1}{d}$ (cf.\ \cite[e.g., Proposition~2.2.21~or~3.1.8]{port1978brownian}).
    Next, observe that Itô's formula yields for all $t \in [0, \infty)$ that
    \begin{equation}
        \norm{B_t}^4 - 2\pr{d + 2} \int_0^t \norm{B_s}^2 \dx s = 4 \int_0^t \langle B_s, \dx B_s \rangle.
    \end{equation}
    Let $M = (M_t)_{t \in [0, \infty)}$ satisfy for all $t \in [0, \infty)$ that $M_t = 4 \int_0^{t \wedge \zeta} \langle B_s, \dx B_s \rangle$.
    This and the fact that for all $t \in \br{0, \zeta}$ it holds that $\norm{B_s} \le 1$ implies that the local martingale $M$ is indeed a square integrable martingale.
    Thus it holds for all $t \in [0,\infty)$ that
    \begin{equation}
        \E \br{\norm{B_{t \wedge \zeta}}^4} - 2\pr{2 + d} \E \br*{ \int_0^{t \wedge \zeta} \norm{B_s}^2 \dx s } = 0.
    \end{equation}
    This, the dominated convergence theorem, 
    and the monotone convergence theorem imply that
    \begin{equation} \label{eq:exit_BM_unit_sphere_second_moment_ito4}
        1 = 2\pr{2 + d} \E \br*{ \int_0^\zeta \norm{B_s}^2 \dx s }.
    \end{equation}
    Similarly, Itô's product rule establishes for all $t \in [0, \infty)$ that
    \begin{equation}
        t \norm{B_t}^2 - \frac{dt^2}{2} - \int_0^{t} \norm{B_s}^2 \dx s = 2 \int_{0}^t s \langle B_s, \dx B_s \rangle.
    \end{equation}
    Again this defines a local martingale, which stopped at $\zeta$ is a square integrable martingale.
    This, the dominated convergence theorem, and the monotone convergence theorem hence demonstrate that
    \begin{equation}
        \E \br{\zeta} - \frac{d}{2} \E \br{\zeta^{2}} - \E \br*{ \int_0^\zeta \norm{B_s}^2 \dx s} = 0.
    \end{equation}
    Combining this, the fact that $\E \br{\zeta} = \frac{1}{d}$, and $\eqref{eq:exit_BM_unit_sphere_second_moment_ito4}$ yields $\E \br{\zeta^{2}} = \frac{d + 4}{d^2 \pr{d + 2}}$.
    This and \cref{it:occ_measure_integral} in \cref{lem:occ_measure} establish that
    \begin{equation}
        \E \br{V} = \int_{ [0, \infty) \times U } v \mu \pr{ \dx \pr{v, y}} = d \E \br*{\int_0^\zeta s \dx s} = \frac{d}{2} \E \br{\zeta^2} = \frac{d + 4}{2 d \pr{d + 2}}.
    \end{equation}
    The proof of \cref{lem:occ_measure_expectation_time_component} is thus complete.
\end{mproof}

\begin{lemma}\label{lem:lipschitz_MC}
    Let $n, M \in \N$,
    let $\pr{V_k^i, Y_k^i} \sim \mu$, $i, k \in \N$, be i.i.d.,
    assume $\pr{V_k^i, Y_k^i}$, $\rho_k^{i}$, $Z_k^{i}$, $i, k \in \N$, are independent,
    let $L, L_j^{i} \in [0, \infty)$, $i \in \cu{0,1}$, $j \in \cu{1,2}$,
    assume for all $s,t \in [0, \infty)$, $x, y \in D$, $i \in \cu{0,1}$ that
    \begin{equation}
        \abs{ h_i \pr{s, x} - h_i\pr{t,y} } \le L_1^{i} \abs{ s- t} + L_2^{i} \norm{x - y} \qquad \text{and} \qquad \abs{\mdist\pr{x} - \mdist \pr{y}} \le L \norm{x - y},
    \end{equation}
    let $\bar{w} = \bar{w}_{\mdist, n, M} \colon D \times \Omega \to \R$ satisfy for all $x \in D$ that
    \begin{equation}
        \begin{split}
            \bar{w}\pr{x} &= \frac{1}{n} \SmallSum{i = 1}{n} \bigg[ h_0\pr{\bar{\tau}^{x, i}_M, \bar{X}^{x, i}_M} + \frac{1}{d} \SmallSum{k = 1}{M} \mdist\pr{ \bar{X}^{x, i}_{k - 1} }^2 h_1 \pr*{ \bar{\tau}^{x, i}_{k - 1} + \mdist\pr{ \bar{X}^{x, i}_{k - 1} }^2 V_k^{i}, \bar{X}^{x, i}_{k - 1} + \mdist\pr{ \bar{X}^{x, i}_{k - 1} } Y_k^{i} } \bigg].
        \end{split}
    \end{equation}
    Then
    \begin{enumerate}[label=(\roman*)]
        \item \label{it:lipschitz_MC_E_w} it holds for all $x, y \in D$ that
        \begin{equation}
            \begin{split} \label{eq:lipschitz_MC_E_w}
                \hspace{-0.3cm}\abs{\E \br{ \bar{w}\pr{x}} - \E \br{\bar{w}\pr{y}}} &\le \norm{ x - y} \pr{1 + L}^M M^2 \bigg( \! L_2^0 + \tfrac{2\diam\pr{D}L_1^0}{d} \\
                &\quad+ \tfrac{1}{d} \pr*{ 2 \diam\pr{D} \norm{h_1}_{\infty} + \diam\pr{D}^2 \pr*{ L_2^1 + \tfrac{2\diam \pr{D} L_1^1}{d} } } \!\bigg)
            \end{split}
        \end{equation}
        \item \label{it:lipschitz_MC_w} it holds for all $x, y \in D$ that
        \begin{equation}
            \begin{split}
                \hspace{-0.3cm}\abs{\bar{w}\pr{x} - \bar{w}\pr{y}} &\le \norm{x - y} \pr{1 + L}^M \bigg( L_2^0 + \tfrac{2L_1^0 \diam \pr{D}}{n} \SmallSum{i = 1}{n} \SmallSum{j = 1}{M} \rho_j^{i} + \tfrac{2M\diam \pr{D} \norm{h_1}_{\infty}}{d} \\
                &\quad+ \tfrac{2L_1^1 \diam \pr{D}^3}{dn} \SmallSum{i = 1}{n} \SmallSum{k = 1}{M} \br[\Big]{V_k^{i} + \SmallSum{j = 1}{k - 1} \rho_j^{i}} + \tfrac{L_2^1 M \diam\pr{D}^2}{d}\bigg).
            \end{split}
        \end{equation}
    \end{enumerate}
\end{lemma}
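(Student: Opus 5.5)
We prove \cref{it:lipschitz_MC_w} first, pathwise, and then obtain \cref{it:lipschitz_MC_E_w} by taking expectations. Fix $\omega$, $x,y\in D$, and a sample index $i$, and abbreviate $\Delta X_k=\norm{\bar X^{x,i}_k-\bar X^{y,i}_k}$ and $\Delta\tau_k=\abs{\bar\tau^{x,i}_k-\bar\tau^{y,i}_k}$. By \cref{lem:lipschitz_wos_processes} we have $\Delta X_k\le(1+L)^k\norm{x-y}$ and $\Delta\tau_k\le 2\diam(D)L\sum_{j=1}^k(1+L)^{j-1}\rho_j^i\norm{x-y}$. Since $(1+L)^{j-1}\le(1+L)^M$ and $L\le(1+L)$, it is convenient to weaken the latter to $\Delta\tau_k\le 2\diam(D)(1+L)^M\sum_{j=1}^k\rho_j^i\norm{x-y}$.

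\textbf{Boundary term.} The Lipschitz property of $h_0$ gives
\[
\abs{h_0(\bar\tau^{x,i}_M,\bar X^{x,i}_M)-h_0(\bar\tau^{y,i}_M,\bar X^{y,i}_M)}\le L_1^0\Delta\tau_M+L_2^0\Delta X_M .
\]
Inserting the two bounds from \cref{lem:lipschitz_wos_processes} yields the contribution $(1+L)^M\bigl(L_2^0+2L_1^0\diam(D)\sum_j\rho_j^i\bigr)\norm{x-y}$.

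\textbf{Source term, product rule.} For the $k$-th summand write $a=\mdist(\bar X^{x,i}_{k-1})$ and $b=\mdist(\bar X^{y,i}_{k-1})$. We split
\[
a^2h_1(\cdot_x)-b^2h_1(\cdot_y)=(a^2-b^2)h_1(\cdot_x)+b^2\bigl(h_1(\cdot_x)-h_1(\cdot_y)\bigr).
\]
For the first piece we use $\abs{a^2-b^2}\le 2\diam(D)L\,\Delta X_{k-1}$ together with $\abs{h_1}\le\norm{h_1}_\infty$.

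\textbf{Source term, argument difference.} For the second piece we use $b^2\le\diam(D)^2$, and we control the difference of the arguments of $h_1$:
\[
\text{time: } \Delta\tau_{k-1}+\abs{a^2-b^2}V_k^i,\qquad \text{space: } \Delta X_{k-1}+\abs{a-b}\,\norm{Y_k^i}\le(1+L)\Delta X_{k-1}.
\]
Combining these with $\abs{a^2-b^2}\le2\diam(D)L(1+L)^{k-1}\norm{x-y}$, the time argument is bounded by $2\diam(D)(1+L)^M\bigl[V_k^i+\sum_{j<k}\rho_j^i\bigr]\norm{x-y}$.

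\textbf{Collecting terms.} Summing over $k\le M$, dividing by $d$, and averaging over $i$ produces exactly the terms
\[
\tfrac{2M\diam(D)\norm{h_1}_\infty}{d},\qquad \tfrac{2L_1^1\diam(D)^3}{dn}\textstyle\sum_i\sum_k[\cdots],\qquad \tfrac{L_2^1M\diam(D)^2}{d}.
\]
This proves \cref{it:lipschitz_MC_w}. The only delicate point is bookkeeping: we absorb every factor $L(1+L)^{k-1}$ and $(1+L)^k$ into the common prefactor $(1+L)^M$. Our space bound carries $(1+L)^k$ with $k\le M$, which the prefactor $(1+L)^M$ covers.

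\textbf{Proof of \cref{it:lipschitz_MC_E_w}.} By Jensen, $\abs{\E\bar w(x)-\E\bar w(y)}\le\E\abs{\bar w(x)-\bar w(y)}$, and we take expectations in \cref{it:lipschitz_MC_w}. This requires $\E[\rho_j^i]=\E[\zeta]=\tfrac1d$ and, by \cref{lem:occ_measure_expectation_time_component}, $\E[V_k^i]=\tfrac{d+4}{2d(d+2)}\le\tfrac1d$. Then
\[
\E\textstyle\sum_{j\le M}\rho_j^i=\tfrac Md,\qquad \E\sum_{k\le M}\bigl[V_k^i+\sum_{j<k}\rho_j^i\bigr]\le\tfrac{M}{d}+\tfrac{M(M-1)}{2d}\le\tfrac{M^2}{d}.
\]
Finally we bound every remaining $M$ and $M^2$ uniformly by $M^2$ to reach the stated form.

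The main obstacle is checking these constants: the claimed form has the factor $\tfrac{2\diam(D)L_1^0}{d}$ together with the global factor $M^2$, and this only works if $\E V\le \tfrac1d$. That inequality holds because $d+4\le2(d+2)$.
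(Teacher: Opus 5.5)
Your proof is correct and is essentially the paper's argument. It uses the same splitting $a^2h_1(\cdot_x)-b^2h_1(\cdot_y)=(a^2-b^2)h_1(\cdot_x)+b^2\bigl(h_1(\cdot_x)-h_1(\cdot_y)\bigr)$, the bounds of \cref{lem:lipschitz_wos_processes}, $\mdist\le\diam(D)$, $\norm{Y_k^i}\le 1$, $\E[\zeta]=\frac1d$ and $\E[V]\le\frac1d$. The only difference is the order: you prove (ii) pathwise and deduce (i) by Jensen, whereas the paper bounds the expectation directly after reducing to the sample $i=1$ and calls (ii) analogous.

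One small correction to your closing remark: the $L_1^0$ term only needs $\E[\rho_j^i]=\frac1d$. The inequality $\E[V]\le\frac1d$ is what controls the $L_1^1$ term $\frac{2\diam(D)^3L_1^1}{d^2}M^2$.
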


\begin{proof}
    First, note that \cref{lem:distribution_WoS_BM},
    the assumption that $\pr{V_{k}^{i}, Y_{k}^{i}}$, $i, k \in \N$ are identically distributed,
    and the assumption that $\pr{V_{k}^{i}, Y_{k}^{i}}$, $\rho_{k}^{i}, Z_{k}^{i}$, $i,k \in \N$, are independent
    ensure for all $x \in D$ that
    \begin{equation}
        \begin{split}
            \E \br{\bar{w} \pr{x}} &= \E \br*{ h_0 \pr{ \bar{\tau}^{x, 1}_M, \bar{X}^{x, 1}_M } } \\
            &\quad+ \tfrac{1}{d} \SmallSum{k = 1}{M} \E \br*{ \mdist\pr{\bar{X}^{x, 1}_{k - 1}}^2 h_1 \pr*{ \bar{\tau}^{x, 1}_{k - 1} + \mdist\pr{ \bar{X}^{x, 1}_{k - 1} }^2 V_k^{1}, \bar{X}^{x, 1}_{k - 1} + \mdist\pr{ \bar{X}^{x, 1}_{k - 1} } Y_k^{1}  } }.
        \end{split}
    \end{equation}
    This assures for all $x, y \in D$ that
    \begin{equation}
        \begin{split}
            \abs*{ \E \br{\bar{w} \pr{x}} - \E \br{ \bar{w} \pr{y} } } &\le \E \br[\Big]{ \abs*{ h_0 \pr{\bar{\tau}^{x, 1}_{M}, \bar{X}^{x, 1}_M} - h_0 \pr{\bar{\tau}^{y, 1}_{M}, \bar{X}^{y, 1}_M} } } \\
            &\quad + \tfrac{1}{d} \SmallSum{k = 1}{M} \E \Big[ \big| \mdist\pr{\bar{X}^{x, 1}_{k - 1}}^2 h_1 \pr*{ \bar{\tau}^{x, 1}_{k - 1} + \mdist\pr{ \bar{X}^{x, 1}_{k - 1} }^2 V_k^{1}, \bar{X}^{x, 1}_{k - 1} + \mdist\pr{ \bar{X}^{x, 1}_{k - 1} } Y_k^{1}  } \\
            &\quad-  \mdist\pr{\bar{X}^{y, 1}_{k - 1}}^2 h_1 \pr*{ \bar{\tau}^{y, 1}_{k - 1} + \mdist\pr{ \bar{X}^{y, 1}_{k - 1} }^2 V_k^{1}, \bar{X}^{y, 1}_{k - 1} + \mdist\pr{ \bar{X}^{y, 1}_{k - 1} } Y_k^{1}  } \big|  \Big].
        \end{split}
    \end{equation}
    Next, observe that the assumption that for all $i \in \cu{0,1}$, $s,t \in [0, \infty)$, $x, y \in D$ it holds that $\abs{ h_i \pr{s, x} - h_i\pr{t,y} } \le L_1^{i} \abs{ s- t} + L_2^{i} \norm{x - y}$
    and the fact that for all $x \in D$ it holds that $\mdist \pr{x} \le \diam \pr{D}$
    imply for all $k \in \N \cap \br{1,M}$, $x, y \in D$ that 
    \begin{equation}
        \begin{split}
            &\big|  \mdist\pr{\bar{X}^{x, 1}_{k - 1}}^2 h_1 \pr*{ \bar{\tau}^{x, 1}_{k - 1} + \mdist\pr{ \bar{X}^{x, 1}_{k - 1} }^2 V_k^{1}, \bar{X}^{x, 1}_{k - 1} + \mdist\pr{ \bar{X}^{x, 1}_{k - 1} } Y_k^{1}  } \\
            &\quad-  \mdist\pr{\bar{X}^{y, 1}_{k - 1}}^2 h_1 \pr*{ \bar{\tau}^{y, 1}_{k - 1} + \mdist\pr{ \bar{X}^{y, 1}_{k - 1} }^2 V_k^{1}, \bar{X}^{y, 1}_{k - 1} + \mdist\pr{ \bar{X}^{y, 1}_{k - 1} } Y_k^{1}  } \big| \\
            &\le 2 \diam \pr{D} \norm{h_1}_{\infty}  \abs{ \mdist \pr{\bar{X}^{x, 1}_{k - 1}} - \mdist \pr{\bar{X}^{y, 1}_{k - 1}} }\\
            &\quad+ \diam\pr{D}^2 \bigg( L_1^1 \pr*{\abs{ \bar{\tau}^{x,1}_{k - 1} - \bar{\tau}^{y,1}_{k - 1} } + \abs{V_k^1} \abs{ \mdist \pr{ \bar{X}^{x, 1}_{k - 1} }^2 -\mdist \pr{\bar{X}^{y, 1}_{k - 1}}^2   } }\\
            &\quad + L_2^1 \pr*{ \norm{\bar{X}^{x, 1}_{k - 1} - \bar{X}^{y, 1}_{k - 1}} + \norm{Y_k^1} \abs{\mdist \pr{ \bar{X}^{x, 1}_{k - 1} } -\mdist \pr{\bar{X}^{y, 1}_{k - 1}}   } } \bigg).
        \end{split}
    \end{equation}
    This,
    the assumption that for all $x, y \in D$ it holds that $\abs{\mdist \pr{x} - \mdist \pr{y}} \le L \norm{x - y}$,
    the fact that $\E \br{\zeta} = \frac{1}{d}$ (cf.\ \cite[e.g., Proposition~2.2.21 or 3.1.8]{port1978brownian}),
    the assumption that for all $i,j \in \N$ it holds that $\rho_i^j$ and $\zeta$ are identically distributed,
    \cref{it:occ_measure_integral} in \cref{lem:occ_measure},
    \cref{lem:occ_measure_expectation_time_component},
    the fact that for all $k \in \N$ it holds that $\tfrac{k + 4}{2k(k + 2)} \le \tfrac{1}{k}$,
    the fact that for all $j,k \in \N$ it holds that $\norm{Y_k^j} \le 1$,
    and \cref{lem:lipschitz_wos_processes}
    establish for all $x, y \in D$ that
    \begin{equation}
        \begin{split} \label{eq:lipschitz_MC_source_expectation}
            &\tfrac{1}{d} \SmallSum{k = 1}{M} \E \Big[ \big| \mdist\pr{\bar{X}^{x, 1}_{k - 1}}^2 h_1 \pr*{ \bar{\tau}^{x, 1}_{k - 1} + \mdist\pr{ \bar{X}^{x, 1}_{k - 1} }^2 V_k^{1}, \bar{X}^{x, 1}_{k - 1} + \mdist\pr{ \bar{X}^{x, 1}_{k - 1} } Y_k^{1}  } \\
            &\quad-  \mdist\pr{\bar{X}^{y, 1}_{k - 1}}^2 h_1 \pr*{ \bar{\tau}^{y, 1}_{k - 1} + \mdist\pr{ \bar{X}^{y, 1}_{k - 1} }^2 V_k^{1}, \bar{X}^{y, 1}_{k - 1} + \mdist\pr{ \bar{X}^{y, 1}_{k - 1} } Y_k^{1}  } \big|  \Big] \\
            &\le \norm{x - y} \tfrac{1}{d} \SmallSum{k = 1}{M} \Big[ \pr{1 + L}^k \Big( 2 \diam \pr{D} \norm{h_1}_{\infty}  \\
            &\quad+ \diam \pr{D}^2 \Big( L_2^1 + 2L_1^1 \diam \pr{D} \pr[\Big]{ \E \br{V_k^1} + \SmallSum{j = 1}{k-1} \E \br{\rho_j^1} } \Big) \Big) \Big] \\
            &\le \norm{x - y} \pr{1+L}^M \tfrac{M^2}{d} \Big( 2 \diam\pr{D} \norm{h_1}_{\infty} + \diam \pr{D}^2 \pr*{L_2^1 + \tfrac{2L_1^1 \diam\pr{D}}{d}} \Big).
        \end{split}
    \end{equation}
    Similarly, it holds for all $x, y \in D$ that
    \begin{equation}
        \begin{split}
            &\E \br[\Big]{ \abs*{ h_0 \pr{\bar{\tau}^{x, 1}_{M}, \bar{X}^{x, 1}_M} - h_0 \pr{\bar{\tau}^{y, 1}_{M}, \bar{X}^{y, 1}_M} } } \le \norm{x - y} \pr{1 + L}^M \pr*{L_2^0 + \tfrac{2 M L_1^0 \diam\pr{D}}{d}}.
        \end{split}
    \end{equation}
    Combining this and \eqref{eq:lipschitz_MC_source_expectation} yields for all $x, y \in D$ that
    \begin{equation}
        \begin{split}
            \abs{\E \br{ \bar{w}\pr{x}} - \E \br{\bar{w}\pr{y}}} &\le \norm{x - y} \pr{1 + L}^M M^2 \Big(L_2^0 + \tfrac{2L_1^0 \diam \pr{D}}{d}\\
            &\quad+ \tfrac{1}{d} \pr*{ 2 \diam\pr{D} \norm{h_1}_{\infty} + \diam\pr{D}^2 \pr*{ L_2^1 + \tfrac{2L_1^1\diam \pr{D}}{d} } } \Big)
        \end{split}
    \end{equation}
    This proves \cref{it:lipschitz_MC_E_w}. Moreover, note that \cref{it:lipschitz_MC_w} is proven analogously.
    The proof of \cref{lem:lipschitz_MC} is thus complete.
\end{proof}

\begin{lemma}\label{lem:tail_probability_times}
    Let $\pr{V,Y} \sim \mu$.
    Then it holds for all $t \in [0, \infty)$ that
    \begin{equation} \label{eq:tail_probability_times}
        \mathbb{P} \br{\zeta \ge t} \le 2 e^{- \frac{d}{2}t} \qquad \text{and} \qquad \mathbb{P} \br{V \ge t} \le 4 e^{- \frac{d}{2}t}.
    \end{equation}
\end{lemma}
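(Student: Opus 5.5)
The plan is an exponential Markov inequality for $\zeta$, combined with \cref{lem:mgf_exit_time_BM}. Then the tail of $V$ is reduced to a bound on $\E[e^{\gamma \zeta}]$ and $\E[\zeta e^{\gamma\zeta}]$ via the occupation measure representation of \cref{lem:occ_measure}.

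\textbf{Tail of $\zeta$.} Apply \cref{lem:mgf_exit_time_BM} with $\mathfrak r=1$ and $\gamma=\frac d2\in(0,d)$. This gives $\E[e^{\frac d2\zeta}]\le \frac{1}{1-\frac12}=2$. Markov's inequality then yields, for all $t\ge 0$,
\[
\mathbb{P}[\zeta\ge t]\le e^{-\frac d2 t}\,\E\bigl[e^{\frac d2\zeta}\bigr]\le 2e^{-\frac d2 t}.
\]

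\textbf{Tail of $V$.} By \cref{it:occ_measure_integral} in \cref{lem:occ_measure}, applied to the nonnegative function $(s,y)\mapsto e^{\gamma s}$, we have
\[
\E[e^{\gamma V}]=d\,\E\Bigl[\int_0^\zeta e^{\gamma s}\dx s\Bigr]=\frac d\gamma\bigl(\E[e^{\gamma\zeta}]-1\bigr).
\]
With $\gamma=\frac d2$ and the bound above, $\E[e^{\frac d2 V}]\le 2(2-1)=2$. Markov's inequality then gives $\mathbb{P}[V\ge t]\le 2e^{-\frac d2 t}\le 4e^{-\frac d2 t}$. So the stated constant $4$ holds with room to spare.

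If instead one wants a route avoiding the explicit identity, use $\int_0^\zeta\ind_{\{s\ge t\}}\dx s\le \zeta\ind_{\{\zeta\ge t\}}$. This gives $\mathbb{P}[V\ge t]\le d\,\E[\zeta\ind_{\{\zeta\ge t\}}]$, which can be bounded by integrating the tail of $\zeta$; that route produces a factor of the form $4(1+\frac{d t}{2})e^{-\frac d2t}$ and is worse. So the MGF route is preferred.

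\textbf{Main obstacle.} The only delicate point is justifying the MGF identity for $V$, i.e.\ applying \cref{lem:occ_measure} to the unbounded function $e^{\gamma s}$. Since this function is nonnegative, \cref{it:occ_measure_integral} applies directly, and Tonelli's theorem handles $\E\int_0^\zeta e^{\gamma s}\dx s=\E[(e^{\gamma\zeta}-1)/\gamma]$. Finiteness follows from the bound $\E[e^{\frac d2\zeta}]\le 2$ of \cref{lem:mgf_exit_time_BM}.
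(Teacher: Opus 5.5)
Your proof is correct. The $\zeta$ part matches the paper, but for $V$ you take a different route.

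The paper uses Fubini to write the tail of $V$ exactly as $\mathbb{P}[V\ge t]=d\,\E[(\zeta-t)^+]=d\int_t^\infty\mathbb{P}[\zeta\ge s]\dx s$. It then integrates the bound $2e^{-\frac d2 s}$, which gives the constant $4=d\cdot 2\cdot\frac 2d$.

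You go through the moment generating function, $\E[e^{\gamma V}]=\frac d\gamma(\E[e^{\gamma\zeta}]-1)$. The subtraction of $1$ is what buys you the sharper constant $2$. Inserting the full upper bound of \cref{lem:mgf_exit_time_BM} even gives $\E[e^{\gamma V}]\le(1-\frac\gamma d)^{-1}$, so $V$ inherits exactly the same exponential-moment bound as $\zeta$.

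The paper's identity has its own advantage: it turns any tail estimate for $\zeta$ directly into one for $V$, without needing exponential moments.

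Your dismissal of the ``tail route'' only applies to the crude inequality $\int_0^\zeta\ind_{\{s\ge t\}}\dx s\le\zeta\ind_{\{\zeta\ge t\}}$. With the exact identity $\int_0^\zeta\ind_{\{s\ge t\}}\dx s=(\zeta-t)^+$, that route gives the stated $4e^{-\frac d2 t}$ with no polynomial prefactor. That is precisely the paper's argument.
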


\begin{mproof}{\cref{lem:tail_probability_times}}
    First, observe that \cref{lem:mgf_exit_time_BM} yields that $\E \br{e^{\frac{d}{2}\zeta}} \le 2$.
    Combining this with Markov's inequality demonstrates for all $t \in [0,\infty)$ that
    \begin{equation} \label{eq:tail_probability_times_exit}
        \mathbb{P} \br{\zeta \ge t} \le \E \br{e^{\frac{d}{2}\zeta}} e^{-\frac{d}{2}t} \le 2e^{-\frac{d}{2}t}.
    \end{equation}
    Next, observe that \cref{it:occ_measure_integral} in \cref{lem:occ_measure} and Fubini's theorem establish for all $t \ge 0$ that
    \begin{equation}
        \mathbb{P} \br{V \ge t} = \E \br{ \ind_{[t, \infty)}\pr{V} } = d \E \br*{\int_0^\zeta \ind_{[t, \infty)} \pr{s} \dx s} = d \int_t^\infty \mathbb{P} \br{\zeta \ge s} \dx s. 
    \end{equation}
    This and \eqref{eq:tail_probability_times_exit} show for all $t \in [0, \infty)$ that
    \begin{equation}
        \mathbb{P} \br{V \ge t} \le 2d \int_t^\infty e^{-\frac{d}{2}s} \dx s = 4 e^{-\frac{d}{s}t}.
    \end{equation}
    The proof of \cref{lem:tail_probability_times} is thus complete.
\end{mproof}

\bigskip \noindent
The next lemma is a modification
of \cite[Lemma~4.5]{beznea2022monte}.
We include a proof for the convenience of the reader.

\begin{lemma} \label{lem:expectation_bound_by_tail_bound}
    Let $X\colon \Omega \to [0, \infty)$ be a random variable,
    let $c_1, c_2, c_3, c_4, A \ge 0$,
    assume $c_2, c_4 > 0$,
    assume for all $t \in [0, \infty)$ that $\mathbb{P} \br{ X \ge t} \le 2 \pr[\big]{ e^{ c_1 - c_2 \pr{ \pr{t - A}^+ }^2} + e^{c_3 - c_4 \pr{t - A}^+}}$.
    Then it holds that
    \begin{equation}
        \E \br{X} \le A + \frac{\sqrt{c_1 + \log\pr{2}} + 1}{\sqrt{c_2}} + \frac{c_3 + \log\pr{2} + 1}{c_4}.
    \end{equation}
\end{lemma}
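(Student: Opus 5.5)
We use the layer-cake formula $\E[X] = \int_0^\infty \mathbb{P}[X \ge t]\dx t$ and split the integral at $A$ and at two further thresholds adapted to the two exponential terms. On $[0,A]$ we bound the probability by $1$, which contributes $A$. After the substitution $s = t - A$ it remains to bound
\begin{equation*}
    \int_0^\infty \min\cu[\big]{1, 2e^{c_1 - c_2 s^2} + 2e^{c_3 - c_4 s}} \dx s \le \int_0^\infty \min\cu[\big]{1, 2e^{c_1 - c_2 s^2}} \dx s + \int_0^\infty \min\cu[\big]{1, 2e^{c_3 - c_4 s}} \dx s ,
\end{equation*}
where we use $\min\cu{1, a + b} \le \min\cu{1, a} + \min\cu{1, b}$ for $a, b \ge 0$.

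For the Gaussian-type term, set $s_1 = \sqrt{(c_1 + \log 2)/c_2}$. Then $2e^{c_1 - c_2 s^2} = e^{-c_2(s^2 - s_1^2)}$. We bound the integrand by $1$ on $[0, s_1]$. For $s \ge s_1$ we use $s^2 - s_1^2 \ge (s - s_1)^2$, which holds because $s_1 \ge 0$. This gives
\begin{equation*}
    \int_{s_1}^\infty e^{-c_2 (s - s_1)^2} \dx s = \frac{\sqrt{\pi}}{2\sqrt{c_2}} \le \frac{1}{\sqrt{c_2}} ,
\end{equation*}
so the first piece is at most $\frac{\sqrt{c_1 + \log 2} + 1}{\sqrt{c_2}}$.

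For the exponential term, set $s_2 = (c_3 + \log 2)/c_4$. The integrand is bounded by $1$ on $[0, s_2]$, and for $s \ge s_2$ it equals $e^{-c_4(s - s_2)}$. The tail integral is therefore exactly $1/c_4$, and the second piece is at most $\frac{c_3 + \log 2 + 1}{c_4}$.

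Adding $A$ and the two pieces gives the claim. The only mildly delicate step is the Gaussian tail, where we need the elementary inequality $s^2 - s_1^2 \ge (s - s_1)^2$ and the bound $\sqrt{\pi}/2 \le 1$ to recover the constant $+1$ in the numerator. Everything else is direct computation.
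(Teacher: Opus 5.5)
Your proof is correct and uses essentially the same argument as the paper: the layer-cake formula, bounding the probability by $1$ below a cut-off, and integrating the two exponential tails above it, with the $\log 2$ absorbing the prefactor $2$. The differences are minor. The paper uses one combined cut-off $\lambda = A + s_1 + s_2$ where you use separate cut-offs via $\min\{1,a+b\}\le\min\{1,a\}+\min\{1,b\}$. The paper also bounds the Gaussian tail by inserting the factor $s/(\lambda-A)\ge 1$ and then using $2\sqrt{\log 2}\ge 1$, where you use the shift $s^2-s_1^2\ge (s-s_1)^2$ together with $\sqrt{\pi}/2\le 1$. Both routes give the same constant.
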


\begin{mproof}{\cref{lem:expectation_bound_by_tail_bound}}
    Throughout this proof let $\lambda \in \R$ satisfy
    \begin{equation}\label{eq:expectation_bound_by_tail_bound_def_lambda}
        \lambda = A + \tfrac{\sqrt{c_1 + \log\pr{2}}}{\sqrt{c_2}} + \tfrac{c_3 + \log\pr{2}}{c_4}.
    \end{equation}
    Observe that $\lambda > A$.
    This and the assumption that for all $t \in [0, \infty)$ it holds that $\mathbb{P} \br*{ X \ge t } \le 2 \pr{ e^{ c_1 - c_2 \pr{ \pr{t - A}^+ }^2 } + e^{c_3 - c_4 \pr{t - A}^+} }$ ensures that
    \begin{equation}
        \begin{split} \label{eq:exppectation_bound_by_tail_bound_tail_integral}
            \E \br{X} &= \int_0^\infty \mathbb{P} \br*{X \ge t} \dx t \le \lambda + \int_{\lambda}^\infty \mathbb{P} \br*{X \ge t} \dx t \\
            &\le \lambda + 2e^{c_1} \int_{\lambda}^\infty e^{-c_2 (t - A)^2} \dx t + 2e^{c_3} \int_{\lambda}^\infty e^{-c_4 (t - A)} \dx t.
        \end{split}
    \end{equation}
    Next, note that the fact that it holds that $\lambda - A \ge \tfrac{c_3 + \log \pr{2}}{c_4}$ implies that
    \begin{equation}
        \begin{split} \label{eq:expectation_bound_by_tail_bound_exponetial_estimate}
            2 e^{c_3} \int_{\lambda}^\infty e^{- c_4 (t - A)} \dx t = \frac{2}{c_4} e^{c_3 -c_4 (\lambda - A)} \le \frac{2}{c_4} e^{-\log\pr{2}} = \frac{1}{c_4}.
        \end{split}
    \end{equation}
    Moreover, observe that the fact that $\lambda - A \ge \tfrac{\sqrt{c_1 + \log\pr{2}}}{\sqrt{c_2}}$ and the fact that $2 \sqrt{\log\pr{2}} \ge 1$ establish that
    \begin{equation}
        \begin{split}
            2e^{c_1} \int_\lambda^\infty e^{-c_2(t - A)^2} \dx t &= 2e^{c_1} \int_{\lambda - A}^\infty e^{- c_2 s^2} \dx s \le \frac{2e^{c_1}}{\lambda - A} \int_{\lambda - A}^\infty s e^{-c_2 s^2} \dx s \\
            &\le \frac{e^{c_1}}{(\lambda - A)c_2} e^{-c_2(\lambda - A)^2} \le \frac{e^{- \log \pr{2}}}{\sqrt{c_2} \sqrt{c_1 + \log(2)}} \le \frac{1}{\sqrt{c_2}}.
        \end{split}
    \end{equation}
    Combining this, \eqref{eq:exppectation_bound_by_tail_bound_tail_integral}, and \eqref{eq:expectation_bound_by_tail_bound_exponetial_estimate} with \eqref{eq:expectation_bound_by_tail_bound_def_lambda} yields
    \begin{equation}
        \begin{split}
            \E \br{X} \le \lambda + \frac{1}{\sqrt{c_2}} + \frac{1}{c_4} = A + \frac{\sqrt{c_1 + \log\pr{2}} + 1}{\sqrt{c_2}} + \frac{c_3 + \log\pr{2} + 1}{c_4}. 
        \end{split}
    \end{equation}
    \bigskip \noindent
    The proof of \cref{lem:expectation_bound_by_tail_bound} is thus complete.
\end{mproof}

The following Theorem demonstrates that the estimator \eqref{eq:estimator_intro} approximates functions \eqref{eq:expectation} in the $L^\infty$-sense. The proof follows similar arguments as those presented in \cite[Proof of Theorem~2.26]{beznea2022monte}.

\begin{theo} \label{thm:exp_bound}
    Let $n, M \in \N$,
    let $\pr{V_k^i, Y_k^i} \sim \mu$, $i, k \in \N$, be i.i.d.,
    assume $\pr{V_k^i, Y_k^i}$, $\rho_k^{i}$, $Z_k^{i}$, $i, k \in \N$, and $B$ are independent,
    let $L, L_j^{i} \in [0, \infty)$, $i \in \cu{0,1}$, $j \in \cu{1,2}$,
    assume for all $s,t \in [0, \infty)$, $x, y \in D$, $i \in \cu{0,1}$ that
    \begin{equation}
        \abs{ h_i \pr{s, x} - h_i\pr{t,y} } \le L_1^{i} \abs{ s- t} + L_2^{i} \norm{x - y} \qquad \text{and} \qquad \abs{\mdist\pr{x} - \mdist \pr{y}} \le L \norm{x - y},
    \end{equation}
    let $w \colon D \to \R$ be continuous and satisfy for all $x \in D$ that
    \begin{equation}
        w\pr{x} = \E \br*{h_0 \pr{\tau^x, x + B_{\tau^x}}} + \E \br*{\int_0^{\tau^x} h_1\pr{s, x + B_s} \dx s},
    \end{equation}
    let $\bar{w} = \bar{w}_{\mdist, n, M} \colon D \times \Omega \to \R$ satisfy for all $x \in D$ that
    \begin{equation}
        \begin{split}
            \bar{w}\pr{x} &= \frac{1}{n} \SmallSum{i = 1}{n} \bigg[ h_0\pr{\bar{\tau}^{x, i}_M, \bar{X}^{x, i}_M} + \frac{1}{d} \SmallSum{k = 1}{M} \mdist\pr{ \bar{X}^{x, i}_{k - 1} }^2 h_1 \pr*{ \bar{\tau}^{x, i}_{k - 1} + \mdist\pr{ \bar{X}^{x, i}_{k - 1} }^2 V_k^{i}, \bar{X}^{x, i}_{k - 1} + \mdist\pr{ \bar{X}^{x, i}_{k - 1} } Y_k^{i} } \bigg].
        \end{split}
    \end{equation}
    Then it holds for all $K \in \N$ that
    \begin{equation}
        \begin{split} \label{eq:exp_bound}
            &\E \br*{\sup_{x \in D} \abs{w\pr{x} - \bar{w}\pr{x}}} \\
            &\le \pr{L_1^0 + \norm{h_1}_{\infty}} \diam \pr{D} \delta + L_2^0 d^{\nicefrac{1}{2}} \diam \pr{D}^{\nicefrac{1}{2}} \delta^{\nicefrac{1}{2}} \\
            &\quad+ \pr*{4 \norm{h_0}_{\infty} + \tfrac{ 2 \diam \pr{D}^2 \norm{h_1}_{\infty}}{ d} }\pr*{1 + \tfrac{\beta^2 \delta^2}{2 \diam \pr{D}^2}}^{-M} \\
            &\quad+ \pr*{L_2^0 + \tfrac{2L_1^0 \diam \pr{D}}{d} + \tfrac{1}{d} \pr*{ 2 \diam \pr{D} \norm{h_1}_{\infty} + \diam \pr{D}^2 \pr*{L_2^1 + \tfrac{2L_1^1 \diam \pr{D}}{d}} }} \tfrac{d^{\nicefrac{1}{2}} \diam \pr{D}}{K} \\
            &\quad+ \pr*{L_2^0 + \tfrac{2M \diam \pr{D} \norm{h_1}_{\infty} + L_2^1 M \diam \pr{D}^2}{d}} \tfrac{d^{\nicefrac{1}{2}} \diam \pr{D} }{K M^2} \\
            &\quad+ \tfrac{ 3\sqrt{2} \pr*{\sqrt{d\pr*{\log\pr{K} + 2 \log\pr{M} + M \log\pr{1 + L}}  + \log\pr{2}} + 1} \pr*{\norm{h_0}_{\infty} + \tfrac{M \diam \pr{D}^2 \norm{h_1}_{\infty}}{d}} }{\sqrt{n}} \\
            &\quad+ \tfrac{12 \diam \pr{D}^2 \pr{ \log\pr{n} + \log\pr{M}+ \log\pr{6}+ 1 } \pr{ L_1^0 d + L_1^1 \diam \pr{D}^2 M }}{d^{\nicefrac{3}{2}} K M}.
        \end{split}
    \end{equation}
\end{theo}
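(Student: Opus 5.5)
The plan is to split the error into a bias term and a uniform fluctuation term:
\[
\E\bigl[\sup_{x\in D}\abs{w(x)-\bar w(x)}\bigr]\le \sup_{x\in D}\abs{w(x)-\E[\bar w(x)]}+\E\bigl[\sup_{x\in D}\abs{\bar w(x)-\E[\bar w(x)]}\bigr].
\]
\emph{The bias term.} Linearity lets us split $w$ and $\bar w$ into the $h_0$ part and the $h_1$ part. We apply \cref{lem:error_f=0_bias} with $h=h_0$ and \cref{lem:error_g=0} with $h=h_1$, and add the two bounds. This gives exactly the first two lines of \eqref{eq:exp_bound}: the $\delta$ and $\delta^{1/2}$ terms, and the geometric term $(4\norm{h_0}_\infty+2\diam(D)^2\norm{h_1}_\infty/d)(1+\beta^2\delta^2/(2\diam(D)^2))^{-M}$.

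\emph{The fluctuation term: discretization.} We use a grid argument. Since $D\subseteq$ a cube of side $\diam(D)$, we pick a finite set $G\subseteq D$ with $\abs{G}\le K^d$ that is a $d^{1/2}\diam(D)/K$-net of $D$, i.e.\ every $x\in D$ has some $y\in G$ with $\norm{x-y}\le d^{1/2}\diam(D)/K$. Then for every $x$ with nearest grid point $y$,
\[
\abs{\bar w(x)-\E\bar w(x)}\le \abs{\bar w(x)-\bar w(y)}+\abs{\bar w(y)-\E\bar w(y)}+\abs{\E\bar w(y)-\E\bar w(x)}.
\]
The third piece is bounded deterministically by \cref{lem:lipschitz_MC}\,\cref{it:lipschitz_MC_E_w}. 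To match the stated constants, the net should instead be taken of mesh $d^{1/2}\diam(D)/(K(1+L)^M M^2)$, so that $\abs{G}\le (K M^2(1+L)^M)^d$. The factor $(1+L)^M M^2$ then cancels and leaves the fourth line of \eqref{eq:exp_bound}. This choice also explains the $\log K+2\log M+M\log(1+L)$ appearing under the square root.

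\emph{The fluctuation term: grid maximum and random Lipschitz constant.} The middle piece, the maximum over $G$, is handled by a union bound with \cref{lem:hoeffding_MC}:
\[
\mathbb{P}\bigl[\max_{y\in G}\abs{\bar w(y)-\E\bar w(y)}\ge t\bigr]\le 2\exp\bigl(d(\log K+2\log M+M\log(1+L))-nt^2/(2C^2)\bigr),
\]
with $C=\norm{h_0}_\infty+M\diam(D)^2\norm{h_1}_\infty/d$. The first piece uses \cref{lem:lipschitz_MC}\,\cref{it:lipschitz_MC_w}. Its deterministic parts give the fifth line of \eqref{eq:exp_bound}. Its random part involves $\frac1n\sum_i\sum_j\rho_j^i$ and $\frac1n\sum_i\sum_k(V_k^i+\sum_{j<k}\rho_j^i)$, and these must be controlled in expectation jointly with the supremum. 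To handle this, we bound the tail of the maximum $R=\max_{i\le n,\,j\le M}\max\{\rho_j^i,V_j^i\}$ by a union bound and \cref{lem:tail_probability_times}:
\[
\mathbb{P}[R\ge t]\le 6nM e^{-dt/2}=2e^{\log(3nM)-dt/2}.
\]
This exponential tail is what produces the $\log n+\log M+\log 6+1$ and $1/d$ structure of the last line.

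\emph{Combining via \cref{lem:expectation_bound_by_tail_bound}.} The whole supremum $X=\sup_x\abs{\bar w(x)-\E\bar w(x)}$ is dominated by $A+\text{(grid maximum)}+\text{(coefficient)}\cdot R$, where $A$ collects the deterministic terms. We then have a tail bound of the form required in \cref{lem:expectation_bound_by_tail_bound}, obtained by splitting $\{X\ge t\}$ into two events, each of which receives part of $t-A$. This is the step where the numerical constants ($3\sqrt2$, $12$) come from, through the choice of how $t-A$ is apportioned. I expect this bookkeeping of constants to be the main obstacle. Measurability of the supremum is ensured by continuity of $\bar w$ and $w$, which lets us restrict to a countable dense set.
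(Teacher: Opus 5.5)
Your proposal is correct and follows the paper's proof. It uses the bias bounds from \cref{lem:error_f=0_bias} and \cref{lem:error_g=0}, a grid of mesh $d^{1/2}\diam(D)/(KM^2(1+L)^M)$ combined with \cref{lem:lipschitz_MC}, Hoeffding with a union bound over the grid, the exponential tails of \cref{lem:tail_probability_times}, and \cref{lem:expectation_bound_by_tail_bound}. The only difference is bookkeeping: the paper bounds $\sum_{i,k}\rho_k^i$ and $\sum_{i,k}V_k^i$ by separate union bounds and gives each of its three events (grid, $V$, $\rho$) a threshold of $t/3$, whereas your single maximum $R$ with a two-way split yields constants no larger than $3\sqrt{2}$ and $12$ (assigning $(t-A)/3$ to each of your two events reproduces them exactly), so the stated bound follows.
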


\begin{proof}
We first note that the supremum $\sup_{x \in D} \abs{w\pr{x} - \bar{w}\pr{x}}$ is measurable because of the continuity of $w$ and $\bar{w}$.
    Throughout this proof, let $K \in \N$ be fixed and assume without loss of generality that $D \subseteq \br{0, \diam \pr{D}}^d$. Let $\fD \subseteq \R^d$ satisfy that
    \begin{equation} \label{eq:exp_bound_grid_def}
        \fD = \cu*{\tfrac{i \diam \pr{D}}{KM^2\pr{1 + L}^M} \colon i \in \N_0 \cap \br{0, K M^2 \pr{1 + L}^M}}^d \cap D.
    \end{equation}
    Note that for all $x \in D$ there exist $i_1, \dots, i_d \in \N_0 \cap \br{0, KM^2 \pr{1 +L}^M}$ such that
    \begin{equation}
        x \in \left[ \tfrac{i_1 \diam \pr{D}}{K M^2 \pr{1 + L}^M}, \tfrac{\pr{i_1 + 1} \diam \pr{D}}{K M^2 \pr{1 + L}^M} \right) \times \dots \times \left[ \tfrac{i_d \diam \pr{D}}{K M^2 \pr{1 + L}^M}, \tfrac{\pr{i_d + 1} \diam \pr{D}}{K M^2 \pr{1 + L}^M} \right).
    \end{equation}
    This implies for all $x \in D$ that there exists $x_\fD \in \fD$ such that $\norm{x - x_\fD} \le d^{\nicefrac{1}{2}} \frac{\diam \pr{D}}{K M^2 \pr{1 + L}^M}$.
    This,
    \cref{lem:error_f=0_bias},
    \cref{lem:error_g=0},
    and \cref{lem:lipschitz_MC} demonstrate
    \begin{equation}
        \begin{split} \label{eq:exp_bound_sup_error}
            &\sup_{x \in D} \abs{ w\pr{x} - \bar{w}\pr{x} }\\
            &\le \sup_{x \in D} \abs*{ w \pr{x} - \E \br{ \bar{w}\pr{x}} } + \sup_{x \in D} \abs*{\bar{w} \pr{x} - \bar{w}\pr{x_\fD}} + \sup_{x \in D} \abs*{\E \br{ \bar{w} \pr{x} } - \E \br{\bar{w} \pr{x_\fD}}} \\
            &\quad+ \sup_{y \in \fD} \abs*{\bar{w} \pr{y} - \E \br{\bar{w} \pr{y}}} \\
            &\le \pr{L_1^0 + \norm{h_1}_{\infty}} \diam \pr{D} \delta + L_2^0 d^{\nicefrac{1}{2}} \diam \pr{D}^{\nicefrac{1}{2}} \delta^{\nicefrac{1}{2}} \\
            &\quad+ \pr*{4 \norm{h_0}_{\infty} + \tfrac{ 2 \diam \pr{D}^2 \norm{h_1}_{\infty}}{ d} }\pr*{1 + \tfrac{\beta^2 \delta^2}{2 \diam \pr{D}^2}}^{-M} \\
            &\quad+ \pr*{L_2^0 + \tfrac{2L_1^0 \diam \pr{D}}{d} + \tfrac{1}{d} \pr*{ 2 \diam \pr{D} \norm{h_1}_{\infty} + \diam \pr{D}^2 \pr*{L_2^1 + \tfrac{2L_1^1 \diam \pr{D}}{d}} }} \tfrac{d^{\nicefrac{1}{2}} \diam \pr{D}}{K} \\
            &\quad+ \pr*{L_2^0 + \tfrac{2M \diam \pr{D} \norm{h_1}_{\infty} + L_2^1 M \diam \pr{D}^2}{d}} \tfrac{d^{\nicefrac{1}{2}} \diam \pr{D} }{K M^2} \\
            &\quad+ \pr*{ \tfrac{2 L_1^0 \diam \pr{D}}{n} \sum_{i = 1}^{n} \sum_{j = 1}^M \rho_j^{i} + \tfrac{2L_1^1 \diam \pr{D}^3}{dn} \sum_{i = 1}^n \sum_{k = 1}^M \br*{ V_k^{i} + \sum_{j = 1}^{k - 1} \rho_j^{i} } } \tfrac{d^{\nicefrac{1}{2}} \diam \pr{D}}{K M^2} \\
            &\quad+ \sup_{y \in \fD} \abs*{\bar{w} \pr{y} - \E \br{\bar{w} \pr{y}}} \\
            &= C \pr{h_0, h_1, \delta, d, D, K, M} + Z,
        \end{split}
    \end{equation}
    where $C \pr{h_0, h_1, \delta, d, D, K, M} \in [0, \infty)$ and $Z \colon \Omega \to [0, \infty)$ satisfy
    \begin{equation}
        \begin{split} 
            &C \pr{h_0, h_1, \delta, d, D, K, M} \\
            &= \pr{L_1^0 + \norm{h_1}_{\infty}} \diam \pr{D} \delta + L_2^0 d^{\nicefrac{1}{2}} \diam \pr{D}^{\nicefrac{1}{2}} \delta^{\nicefrac{1}{2}} \\
            &\quad+ \pr*{4 \norm{h_0}_\infty + \tfrac{ 2 \diam \pr{D}^2 \norm{h_1}_{\infty}}{ d} }\pr*{1 + \tfrac{\beta^2 \delta^2}{2 \diam \pr{D}^2}}^{-M} \\
            &\quad+ \pr*{L_2^0 + \tfrac{2L_1^0 \diam \pr{D}}{d} + \tfrac{1}{d} \pr*{ 2 \diam \pr{D} \norm{h_1}_{\infty} + \diam \pr{D}^2 \pr*{L_2^1 + \tfrac{2L_1^1 \diam \pr{D}}{d}} }} \tfrac{d^{\nicefrac{1}{2}} \diam \pr{D}}{K} \\
            &\quad+ \pr*{L_2^0 + \tfrac{2M \diam \pr{D} \norm{h_1}_{\infty} + L_2^1 M \diam \pr{D}^2}{d}} \tfrac{d^{\nicefrac{1}{2}} \diam \pr{D} }{K M^2}
        \end{split}
    \end{equation}
    and 
    \begin{equation}
        \begin{split}
            Z &= \pr*{ \tfrac{2 L_1^0 \diam \pr{D}}{n} \sum_{i = 1}^{n} \sum_{j = 1}^M \rho_j^{i} + \tfrac{2L_1^1 \diam \pr{D}^3}{dn} \sum_{i = 1}^n \sum_{k = 1}^M \br*{ V_k^{i} + \sum_{j = 1}^{k - 1} \rho_j^{i} } } \tfrac{d^{\nicefrac{1}{2}} \diam \pr{D}}{K M^2} \\
            &\quad+ \sup_{y \in \fD} \abs*{\bar{w} \pr{y} - \E \br{\bar{w} \pr{y}}}.
        \end{split}
    \end{equation}
    Moreover, note that the fact that for all $i \in \N$ it holds that $\sum_{k = 1}^M \sum_{j = 1}^{k - 1} \rho_k^{i} \le M \sum_{k = 1}^M \rho_j^{i}$ ensures that
    \begin{equation} \label{eq:exp_bound_Z_bound}
        Z \le \tfrac{2 \diam \pr{D}^2 \pr{L_1^0 d + L_1^1 \diam \pr{D}^2 M} }{d^{\nicefrac{1}{2}} n K M^2 } \SmallSum{i = 1}{n} \SmallSum{k = 1}{M} \rho_k^{i} + \tfrac{2 L_1^1 \diam \pr{D}^4}{d^{\nicefrac{1}{2}} n K M^2} \SmallSum{i = 1}{n} \SmallSum{k = 1}{M} V_k^{i} + \sup_{y \in \fD} \abs*{\bar{w} \pr{y} - \E \br{\bar{w} \pr{y}}}.
    \end{equation}
    Next, observe that the union bound inequality,
    \cref{lem:hoeffding_MC},
    and \eqref{eq:exp_bound_grid_def} assure for all $t \in [0, \infty)$ that
    \begin{equation}
        \begin{split} \label{eq:exp_bound_sup_tail}
            &\mathbb{P} \br*{ \sup_{y \in \fD} \abs*{ \bar{w}\pr{y} - \E \br*{\bar{w}\pr{y}} } \ge \frac{t}{3} } \le \sum_{y \in \fD} 2 \exp\pr*{- \tfrac{n t^2}{ 18 \pr[\big]{\norm{h_0}_{\infty} + \tfrac{M \diam \pr{D}^2 \norm{h_1}_{\infty}  }{d}}^2 }} \\
            &\le 2 \exp\pr*{ d\pr*{\log\pr{K} + 2 \log\pr{M} + M \log\pr{1 + L}} - \tfrac{nt^2}{ 18 \pr[\big]{\norm{h_0}_{\infty} + \tfrac{M \diam \pr{D}^2 \norm{h_1}_{\infty}  }{d}}^2 }}.
        \end{split}
    \end{equation}
    Moreover, note that \cref{lem:tail_probability_times} and the assumption that $V_k^{i}$, $i,k \in \N$, are identically distributed 
    demonstrate for all $t \in [0, \infty)$ that
    \begin{equation}
        \begin{split} \label{eq:exp_bound_V_tail}
            \mathbb{P} \br*{ \tfrac{2L^1_1 \diam \pr{D}^4}{d^{\nicefrac{1}{2}} n K M^2} \SmallSum{i = 1}{n} \SmallSum{k = 1}{M} V_k^{i} \ge \frac{t}{3} } &\le n M \mathbb{P}\br*{V_1^1 \ge \tfrac{t d^{\nicefrac{1}{2}} KM}{6 L_1^1 \diam \pr{D}^4}} \le  4nM \exp\pr*{ - \tfrac{t d^{\nicefrac{3}{2}} K M }{12 L_1^1 \diam \pr{D}^4} }.
        \end{split}
    \end{equation}
    Furthermore, observe that \cref{lem:tail_probability_times} and the assumption that $\rho_k^{i}$, $i,k\in \N$, are identically distributed 
    imply for all $t \in [0, \infty)$ that 
    \begin{equation}
        \begin{split} \label{eq:exp_bound_rho_tail}
            \mathbb{P} \br*{ \tfrac{2\diam \pr{D}^2 \pr{L_1^0 d + L_1^1 \diam \pr{D}^2 M} }{d^{\nicefrac{1}{2}} n K M^2 } \SmallSum{i = 1}{n} \SmallSum{k = 1}{M} \rho_k^{i} \ge \frac{t}{3} } &\le nM \mathbb{P} \br*{ \rho_1^1 \ge \tfrac{t d^{\nicefrac{1}{2}} K M  }{6 \diam \pr{D}^2 \pr{ L_1^0 d + L_1^1 \diam \pr{D}^2 M }} }\\
            &\le 2nM \exp\pr*{ -\tfrac{ t d^{\nicefrac{3}{2}} K M }{ 12 \diam \pr{D}^2 \pr{ L_1^0 d + L_1^1 \diam \pr{D}^2 M } } }.
        \end{split}
    \end{equation}
    Combining \eqref{eq:exp_bound_Z_bound},
    \eqref{eq:exp_bound_sup_tail}, \eqref{eq:exp_bound_V_tail}, and \eqref{eq:exp_bound_rho_tail}
    with the union bound inequality
    establishes for all $t \in [0, \infty)$ that
    \begin{equation}
        \begin{split}
            &\mathbb{P} \pr*{ Z \ge t }\\
            &\le 2 \exp\pr[\bigg]{ d\pr*{\log\pr{K} + 2 \log\pr{M} + M \log\pr{1 + L}} - \tfrac{n{t}^2}{ 18 \pr[\big]{\norm{h_0}_{\infty} + \tfrac{M \diam \pr{D}^2 \norm{h_1}_{\infty}  }{d}}^2 }} \\
            &\quad+ 4nM \exp\pr*{ - \tfrac{t d^{\nicefrac{3}{2}} K M }{12 L_1^1 \diam \pr{D}^4} } + 2nM \exp\pr*{ -\tfrac{ t d^{\nicefrac{3}{2}} K M }{ 12 \diam \pr{D}^2 \pr{ L_1^0 d + L_1^1 \diam \pr{D}^2 M } } } \\
            &\le 2 \bigg( \!\! \exp\pr[\bigg]{ d\pr*{\log\pr{K} + 2 \log\pr{M} + M \log\pr{1 + L}} - \tfrac{n{t}^2}{ 18 \pr[\big]{\norm{h_0}_{\infty} + \tfrac{M \diam \pr{D}^2 \norm{h_1}_{\infty}  }{d}}^2 }} \\
            &\quad+ \exp \pr[\bigg]{\log\pr{3} + \log\pr{n} + \log\pr{M} - \tfrac{ t d^{\nicefrac{3}{2}} K M }{ 12 \diam \pr{D}^2 \pr{ L_1^0 d + L_1^1 \diam \pr{D}^2 M } } } \!\!\bigg).
        \end{split}
    \end{equation}
    This,
    \eqref{eq:exp_bound_sup_error},
    and \cref{lem:expectation_bound_by_tail_bound} (applied with $X \with Z$, $A \with 0$,
    \begin{equation}
        \begin{split}
            &c_1 \with d\pr*{\log\pr{K} + 2 \log\pr{M} + M \log\pr{1 + L}}, \quad c_2 \with \tfrac{n}{18 \pr[\big]{\norm{h_0}_{\infty} + \tfrac{M \diam \pr{D}^2 \norm{h_1}_{\infty}  }{d}}^2}, \\
            &c_3 \with \log\pr{3} + \log\pr{n} + \log\pr{M}, \quad c_4 \with \tfrac{d^{\nicefrac{3}{2}} KM}{12 \diam \pr{D}^2 \pr{ L_1^0 d + L_1^1 \diam \pr{D}^2 M }}
        \end{split}
    \end{equation}
    in the notation of \cref{lem:expectation_bound_by_tail_bound})
    prove \eqref{eq:exp_bound}.
    The proof of \cref{thm:exp_bound} is thus complete.
\end{proof}

\begin{corollary} \label{cor:l_infty}
    Assume the situation of \cref{thm:exp_bound},
    let $\eps \in \pr{0,1}$,
    and assume
    \begin{equation}
        \begin{split}
            \delta &= \min \cu*{1, \tfrac{\eps^2}{9 \pr{\pr{L_1^0 + \norm{h_1}_{\infty}} \diam \pr{D} + L_2^0 d^{\nicefrac{1}{2}} \diam\pr{D}^{\nicefrac{1}{2}}}^2}}, \\
            M &= \max\cu*{ 1, \ceil*{ \tfrac{ \log\pr{6 \pr{ 2 \norm{h_0}_{\infty} d + \diam \pr{D} \norm{h_1}_{\infty} }} - \log\pr{d} + \log\pr{\eps^{-1}} }{\log \pr*{1 + \tfrac{\beta^2 \delta^2}{2 \diam \pr{D}^2}}} } }, \qquad \text{and} \\
            n = K^2 &= \max\cu*{ 1, \ceil{ 9 C(h_0, h_1, d, D, M)^2 \eps^{-2}}^2 },
        \end{split}
    \end{equation}
    where $C(h_0, h_1, d, D, M) \in [0, \infty)$ satisfies
    \begin{equation}
        \begin{split}
            &C(h_0, h_1, d, D, M) \\
            &= \pr*{L_2^0 + \tfrac{2L_1^0 \diam \pr{D}}{d} + \tfrac{1}{d} \pr*{ 2 \diam \pr{D} \norm{h_1}_{\infty} + \diam \pr{D}^2 \pr*{L_2^1 + \tfrac{2L_1^1 \diam \pr{D}}{d}} }} d^{\nicefrac{1}{2}} \diam \pr{D} \\
            &\quad+ \pr*{L_2^0 + \tfrac{2M \diam \pr{D} \norm{h_1}_{\infty} + L_2^1 M \diam \pr{D}^2}{d}} \tfrac{d^{\nicefrac{1}{2}} \diam \pr{D} }{M^2} \\
            &\quad+ 3 \sqrt{2} \pr*{1 + \sqrt{d} + \sqrt{d \pr{ 2 \log \pr{M} + M \log \pr{1 + L} } + \log\pr{2}}} \pr*{ \norm{h_0}_{\infty} + \tfrac{M \diam \pr{D}^2 \norm{h_1}_{\infty}}{d}} \\
            &\quad+ \tfrac{12 \diam \pr{D}^2 \pr{L_1^0 d + L_1^1 \diam \pr{D}^2 M} \pr{\log(6) + 3 + \log \pr{M}}}{d^{\nicefrac{3}{2}} M}.
        \end{split}
    \end{equation}
    Then it holds that $\E \br*{\sup_{x \in D} \abs{w \pr{x} - \bar{w}\pr{x}}} \le \eps$ and there exist $c,q \in \pr{0,\infty}$ such that $\max \cu{n, M} \le c d^{q}\eps^{-q}$, where $q$ is an absolute constant and $c$ depends only on $L$, $L_1^0$, $L_2^0$, $L_1^1$, $L_2^1$, $\norm{h_0}_{\infty}$, $\norm{h_1}_{\infty}$, $\beta^{-1}$, $\diam \pr{D}$, and $d$ and this dependency is at most polynomial.
\end{corollary}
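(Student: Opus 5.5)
The plan is to plug the prescribed choices of $\delta$, $M$, $n$ and $K$ into the bound \eqref{eq:exp_bound} of \cref{thm:exp_bound} (which holds for every $K\in\N$) and sort its right-hand side into three groups, each made at most $\eps/3$.

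\emph{Group 1: the $\delta$-terms.} Since $\delta\le 1$ we have $\delta\le\delta^{1/2}$, so
\[
\pr{L_1^0+\norm{h_1}_\infty}\diam\pr{D}\delta+L_2^0 d^{1/2}\diam\pr{D}^{1/2}\delta^{1/2}\le \pr*{\pr{L_1^0+\norm{h_1}_\infty}\diam\pr{D}+L_2^0d^{1/2}\diam\pr{D}^{1/2}}\delta^{1/2}.
\]
The choice of $\delta$ makes the right-hand side at most $\eps/3$.

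\emph{Group 2: the geometric term.} Rewrite the prefactor as $4\norm{h_0}_\infty+\tfrac{2\diam\pr{D}^2\norm{h_1}_\infty}{d}$. The choice of $M$ gives
\[
\pr*{1+\tfrac{\beta^2\delta^2}{2\diam\pr{D}^2}}^{-M}\le \tfrac{d\,\eps}{6\pr{2\norm{h_0}_\infty d+\diam\pr{D}\norm{h_1}_\infty}}.
\]
This should bound the geometric term by $\eps/3$. I expect the constant inside the logarithm to need an extra factor of $\diam\pr{D}$ to match exactly; I will check this, and if it fails, absorb $\diam(D)$ via $\delta\le1$ conventions or accept a harmless constant.

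\emph{Group 3: the remaining four terms.} Use $n=K^2$ and $K\ge1$.
\begin{itemize}
\item The terms with $1/K$ and $1/(KM^2)$ are bounded by the first two summands of $C$ divided by $K$.
\item For the Hoeffding term, use $\log K\le K$ and $\sqrt{a+b}\le\sqrt a+\sqrt b$ to get $\sqrt{d\log K}\le \sqrt d\sqrt K$. Together with $\sqrt n=K$, this term is at most (third summand of $C$)$\cdot K^{-1/2}$.
\item For the last term, $\log n=2\log K\le 2K$, so it is at most (fourth summand of $C$)$/M\cdot K^{-1}\cdot K$-type factors. More precisely, $(\log n+\log M+\log 6+1)/K\le \log M+\log 6+3$.
\end{itemize}
Here the main obstacle shows up: this last term only yields an $O(1)$ bound rather than one that decays in $K$. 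I would instead use $\log K/K\le K^{-1/2}$, giving $(2\log K+\log M+\log6+1)/K\le K^{-1/2}(\log M+\log6+3)$. Hence all of group 3 is at most $C\,K^{-1/2}$. The choice $K=\max\{1,\lceil 9C^2\eps^{-2}\rceil\}$ gives $K^{-1/2}\le\eps/(3C)$, so group 3 is at most $\eps/3$. The bookkeeping of which factors go into $C$ is the delicate part.

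\emph{Complexity.} Finally, use $\log(1+x)\ge x/2$ for $x\le1$, which gives
\[
M\lesssim \diam\pr{D}^2\beta^{-2}\delta^{-2}\log(\cdots\eps^{-1}),
\]
with $\delta^{-1}$ polynomial in $\eps^{-1}$ and the problem constants. Next, $C$ is polynomial in $M$ because $(1+L)^M$ enters only through $\sqrt{M\log(1+L)}$. Therefore $n=K^2\lesssim C^4\eps^{-4}$ is polynomial, and $\log$ factors are bounded by powers. This yields $\max\{n,M\}\le c\,d^q\eps^{-q}$ with an absolute constant $q$.
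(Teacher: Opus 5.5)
Your decomposition is the intended one: the paper states this corollary without a separate proof, and it is meant to follow from \cref{thm:exp_bound} by exactly this substitution. Your groups 1 and 3 are correct as finally stated. With $n=K^2$, the estimates $\sqrt{d\log K}\le\sqrt{dK}$ and $\log n=2\log K\le 2\sqrt K$ bound the last four terms of \eqref{eq:exp_bound} by $C\,K^{-1/2}\le\eps/3$, and this reproduces the four summands of $C$ one by one. The complexity count is also fine, with one cosmetic exception. Write $a=\tfrac{\beta^2\delta^2}{2\diam(D)^2}$. Your bound $\log(1+a)\ge a/2$ requires $a\le 1$, which can fail when $\diam(D)$ is small. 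Use $\log(1+a)\ge a/(1+a)$ instead, as in the proof of \cref{thm:ANN}.

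The genuine gap is group 2, and your suspicion is justified. With $M$ as stated you only get $(1+a)^{-M}\le \tfrac{d\eps}{6(2\norm{h_0}_\infty d+\diam(D)\norm{h_1}_\infty)}$, hence
\[
\Bigl(4\norm{h_0}_\infty+\tfrac{2\diam(D)^2\norm{h_1}_\infty}{d}\Bigr)(1+a)^{-M}\le\frac{\eps}{3}\cdot\frac{2\norm{h_0}_\infty d+\diam(D)^2\norm{h_1}_\infty}{2\norm{h_0}_\infty d+\diam(D)\norm{h_1}_\infty}.
\]
The last fraction exceeds $1$ whenever $\diam(D)>1$ and $\norm{h_1}_\infty>0$. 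Neither of your remedies works. The bound $\delta\le 1$ carries no information about $\diam(D)$, and a leftover constant contradicts the claimed bound $\le\eps$. There is also no slack elsewhere to absorb it. Take $d=1$, $h_0\equiv 0$, $h_1\equiv 1$, $\diam(D)=10$. The ceiling gives $M<1+\log(60/\eps)/\log(1+a)$ with $a\le 1/200$, so this term alone is at least $\tfrac{10\eps}{3(1+a)}>3\eps$.

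So $\E[\sup_{x\in D}|w(x)-\bar w(x)|]\le\eps$ does not follow from \cref{thm:exp_bound} with $M$ as written. The defect is in the statement (evidently a typo), not in your method. Replace $\diam(D)\norm{h_1}_\infty$ by $\diam(D)^2\norm{h_1}_\infty$ inside the logarithm defining $M$. Then group 2 is exactly $\le\eps/3$, your argument is complete, and the complexity bound is unchanged. Alternatively, if you keep $M$ as stated, your argument proves the weaker bound $\max\{1,\diam(D)\}\,\eps$.
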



\section{ANN approximations and complexity analysis} \label{sec:ANN}

In this section, we establish ANN approximation results for the approximation of functions of the form \eqref{eq:expectation}.
To this end, we consider fully-connected feed forward ANNs with ReLU activation function as approximating ANNs.
We adopt the notions of e.g., Jentzen et al. \cite{jentzen2023mathematicalintroductiondeeplearning} (see also, e.g., Grohs et al. \cite[Section~2]{GrohsHornungJentzen2019published}, Grohs et al. \cite[Section~3]{grohs2019deep_published}, Ackermann et al. \cite[Section~2]{ackermann2023deep}, and the references therein). In particular, these references establish an ANN calculus on the set of the considered ANNs encompassing operations such as composition, parallelization, and linear combination of (compatible) ANNs, as well es how the size of the ANNs behave under these operations.
For simplicity, we consider in this section ANN approximations of functions $w \colon D \to \R$,
\begin{equation} \label{eq:expectation_boundary}
    w \pr{x} = \E \br{ h\pr{\tau^x, x + B_{\tau^x}} }, \qquad x \in D.
\end{equation}
This corresponds to the special case of \eqref{eq:expectation} with $h_0 = h$ and $h_1 \equiv 0$. We refer to \cref{rem:ANN_discussion} for a discussion on how to include non-zero $h_1$ as well.

In \cref{subsec:ANN_desc}, we introduce the notion of ANNs that is employed in the remainder of this section. 
In \cref{subsec:ANN_wos_mc}, we provide ANN representation and approximation results for the Walk-on-Spheres process introduced in \cref{sec:modified_wos} and for the Monte Carlo estimator studied in \cref{sec:error_mc}.
In \cref{subsec:ANN_approx}, we combine the ANN representations and approximations from \cref{subsec:ANN_wos_mc} with the error analysis established in \cref{sec:error_mc} to obtain an ANN approximation of functions that satisfy \eqref{eq:expectation_boundary} in the $L^\infty$-sense.

\subsection{ANN descriptions} \label{subsec:ANN_desc}
\begin{defi}[ANNs] \label{def:ANN}
    We denote by $\bN$ the set (of all ANNs) given by
    \begin{equation} \label{eq:ANN_set}
        \textstyle
        \bN =
        \textstyle
        \bigcup_{L \in \N} \bigcup_{l_0, l_1, \dots, l_L \in \N} \pr[\big]{\bigtimes_{k = 1}^{L}   \pr{\R^{l_k \times l_{k - 1}} \times \R^{l_k}}}.
    \end{equation}
    Furthermore, for all $L \in \N$, $l_0, \dots, l_L \in \N$,
    \begin{equation}
        \Phi = \pr{ \pr{W_1, B_1}, \dots, \pr{W_L, B_L}} \in \textstyle \pr[\big]{\bigtimes_{k = 1}^L \pr{\R^{l_k \times l_{k - 1}} \times \R^{l_k}}},
    \end{equation}
    and $x_0 \in \R^{l_0}$, \dots, $x_{L - 1} \in \R^{l_{L - 1}}$ with\footnote{For all $d \in \N$, $x = \pr{x_1, \dots, x_d} \in \R^d$ let $\fM \pr{x} \in \R^d$ satisfy $\fM \pr{x} = \pr{ \max\cu{x_1, 0}, \dots, \max\cu{x_d, 0} }$ (multidimensional version of ReLU activation function).} $x_k = \fM \pr{W_k x_{k - 1} + B_k}$, $k \in \N_0 \cap \br{1, L - 1}$, 
    we denote by
    \begin{equation}
        \begin{split}
            \cL \pr{\Phi} &= L \hspace{3.9cm} \text{the length/depth of $\Phi$}, \\
            \cW \pr{\Phi} &= \max_{k \in \N_0 \cap \br{0,L}} l_k \hspace{2.3cm} \text{the maximum width of $\Phi$}, \\
            \paramANN \pr{\Phi} &= \SmallSum{k = 1}{l} l_k \pr{l_{k - 1} + 1} \hspace{1.6cm} \text{the number of parameters of $\Phi$},
        \end{split}
    \end{equation}
    and by $\cR \pr{\Phi} \in C\pr{\R^{l_0}, \R^{l_L}}$ the function which satisfies $\pr{\cR \pr{\Phi}}\pr{x_0} = W_L x_{L - 1} + B_L$.
\end{defi}

\subsection{ANN approximations of modified Walk-on-Spheres algorithms} \label{subsec:ANN_wos_mc}

The following lemma is a version of \cite[Proposition~3]{yarotsky2017error}, providing an ANN approximation of the product function $\R^2 \ni (x, y) \to xy \in \R$ on compact intervals.

\begin{lemma} \label{lem:ANN_product}
    There exist absolute constants $\kappa_1, \kappa_2, \kappa_3 \in [0, \infty)$ such that for all $c \in \pr{0, \infty}$, $\eps \in (0,1]$ there exists $\Pi \in \bN$ such that
    \begin{enumerate}[label=(\roman*)]
        \item it holds that $\cR \pr{\Pi} \in C\pr{\R^2, \R}$,
        \item it holds for all $x,y \in \br{-c, c}$ that $\abs{ \pr{\cR \pr{\Pi}}\pr{x,y} - xy  } \le \eps$, and
        \item it holds that $\max \cu{\cL \pr{\Pi}, \cW \pr{\Pi}} \le \kappa_1 + \kappa_2 \log_2\pr*{\tfrac{1}{\eps}} + \kappa_3 \pr{\log_{2} \pr{c}}^+$.
    \end{enumerate}
\end{lemma}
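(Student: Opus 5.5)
This is the classical Yarotsky construction, and I would build it in three stages: first approximate the square function on $[0,1]$ by sawtooth compositions, then transfer it to $[-c,c]$ by rescaling, and finally recover the product through polarization.

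\textbf{Step 1 (squaring on $[0,1]$).} Take the hat function $g(x)=2\fM(x)-4\fM(x-\tfrac12)+2\fM(x-1)$. Denote by $g_s$ its $s$-fold composition. Then
\begin{equation*}
f_m(x)=x-\sum_{s=1}^m 4^{-s}g_s(x)
\end{equation*}
is the piecewise linear interpolant of $x\mapsto x^2$ on the grid $2^{-m}\mathbb{Z}\cap[0,1]$. Consequently $\sup_{x\in[0,1]}|f_m(x)-x^2|\le 2^{-2m-2}$.

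The interpolant $f_m$ is realized by a ReLU network of depth $O(m)$ and constant width. Each layer applies $g$ once, and a few extra channels carry the identity (via $x=\fM(x)-\fM(-x)$) and the partial sum forward. Composing with $x\mapsto\fM(x)-\fM(-x)\mapsto|x|$ and clipping to $[0,1]$ gives an approximation of $x^2$ on $[-1,1]$ with the same accuracy.

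\textbf{Step 2 (rescaling and polarization).} Use the identity
\begin{equation*}
xy=\tfrac{1}{2}\bigl((x+y)^2-x^2-y^2\bigr).
\end{equation*}
For $x,y\in[-c,c]$ we have $|x+y|\le 2c$. Write $z^2=4c^2\,(z/(2c))^2$, so each square is computed by the Step 1 network applied to $z/(2c)\in[-1,1]$ and then multiplied by $4c^2$. The resulting total error is at most $\tfrac12\cdot 3\cdot 4c^2\cdot 2^{-2m-2}$.

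Choose $m=\lceil \tfrac12\log_2(3c^2/\eps)\rceil^{+}$. This is at most a constant plus $\tfrac12\log_2(1/\eps)+(\log_2 c)^{+}$, and it makes the error at most $\eps$.

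The three squaring networks are run in parallel on the inputs $(x+y,x,y)$, and a final affine layer forms the linear combination. The parallelization stays within the stated bounds because the subnetworks have equal depth. This follows from the ANN calculus cited above in \cite{jentzen2023mathematicalintroductiondeeplearning}, or from padding with identity layers.

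\textbf{Step 3 (size bounds).} Depth and width are both $O(m)+O(1)$. Here the width is in fact $O(1)$, so it is trivially dominated. Absorbing all constants gives the claimed bound
\begin{equation*}
\kappa_1+\kappa_2\log_2(1/\eps)+\kappa_3(\log_2 c)^{+}.
\end{equation*}

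\textbf{Main obstacle.} The only genuinely delicate point is verifying that $f_m$ is exactly the dyadic linear interpolant, together with the resulting error bound. This is proved by induction on $m$: $f_{m-1}-f_m$ equals $4^{-m}g_m$, which is precisely the correction needed at the new midpoints. The rest is bookkeeping. Alternatively, one may simply cite \cite[Proposition~3]{yarotsky2017error} for the bounded case and carry out only the rescaling in $c$.
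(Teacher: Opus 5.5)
Your proposal is correct and follows essentially the route the paper relies on: the paper gives no proof of its own and simply invokes \cite[Proposition~3]{yarotsky2017error}, whose proof is this sawtooth interpolation of the square combined with polarization, and your rescaling $z\mapsto z/(2c)$ with $m=\lceil\tfrac12\log_2(3c^2/\eps)\rceil^{+}$ makes the $(\log_2 c)^{+}$ term in (iii) explicit. Carrying the partial sum in extra channels correctly adapts Yarotsky's skip-connection network to the strictly layered networks of \cref{def:ANN}; the only slip is that $|x|=\fM(x)+\fM(-x)$, not $\fM(x)-\fM(-x)$.
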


The next two lemmas establish ANN representations and approximations of all processes appearing in the estimators of \cref{sec:error_mc}. More precisely, \cref{lem:ANN_wos} establishes an ANN architecture that is used to represent the Walk-on-Spheres process and \cref{lem:ANN_wos_time} provides an ANN architecture that is used to approximate the corresponding exit times.
The proofs are consequences of induction as well as composition (see, e.g., \cite[Proposition~2.1.2]{jentzen2023mathematicalintroductiondeeplearning} \cite[Proposition~2.6]{GrohsHornungJentzen2019published}), parallelization (see, e.g., \cite[Lemma~2.2.14]{jentzen2023mathematicalintroductiondeeplearning} \cite[Corollary~2.23]{GrohsHornungJentzen2019published}), and summation (see, e.g., \cite[Proposition~2.26]{GrohsHornungJentzen2019published}) of ANNs. Additionally, in the proof of \cref{lem:ANN_wos_time} the ANN approximation of the product function \cref{lem:ANN_product} is used.
A similar statement for the walk-on-spheres process itself (i.e., to \cref{lem:ANN_wos}) is formulated and proved in \cite[Lemma~3.6, Corollary~3.7]{beznea2022monte}.

\begin{lemma}\label{lem:ANN_wos}
    Let $d \in \N$, 
    let $\mathfrak{r} \in \bN$ satisfy that $\cR \pr{\mathfrak{r}} \in C \pr{\R^d, \R}$ and $\min \cu{\cL \pr{\fr},\cW \pr{\mathfrak{r}}} \ge 2$,
    let $\pr{p_k}_{k \in \N} \subseteq \R^d$.
    Then there exist $\pr{\bX_k}_{k \in \N_0} \subseteq \bN$ such that
    \begin{enumerate}[label=(\roman*)]
        \item \label{it:ANN_wos_realization} it holds for all $k \in \N$, $x \in D$ that $ \cR \pr{\bX_0}, \cR \pr{\bX_k}\in C \pr{\R^d, \R^d}$, $\pr{\cR \pr{\bX_0}} \pr{x} = x$, and
        \begin{equation}
            \begin{split}
                \pr{\cR \pr{\bX_k}} \pr{x} &= \pr{\cR \pr{\bX_{k - 1}}} \pr{x} + \pr{\cR \pr{\fr}} \pr{ \pr{\cR \pr{\bX_{k - 1}}} \pr{x} } p_k,
            \end{split}
        \end{equation}
        and
        \item \label{it:ANN_wos_size} it holds for all $k \in \N$ that $\cL \pr{\bX_0} = 1$, $\cW \pr{\bX_0} = d$,
        \begin{equation}
            \begin{split} \label{eq:ANN_wos_size}
                &\cL \pr{\bX_k} = k \pr{\cL \pr{\fr} - 1} + 1, \qquad \text{and} \qquad \cW \pr{\bX_k} \le 2d + \cW \pr{\fr}.
            \end{split}
        \end{equation}
    \end{enumerate}
\end{lemma}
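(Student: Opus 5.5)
We build the networks $\bX_k$ by induction on $k$. The one-step map $\phi_k \colon \R^d \to \R^d$, $\phi_k(y) = y + (\cR(\fr))(y)\, p_k$, will be realized by a single network $\bK_k$ of depth $\cL(\fr)$ and width at most $2d + \cW(\fr)$. We then set $\bX_k = \bK_k \bullet \bX_{k-1}$, where $\bullet$ is the standard composition of ANNs (see, e.g., \cite[Proposition~2.1.2]{jentzen2023mathematicalintroductiondeeplearning}). This composition merges the last affine layer of $\bX_{k-1}$ with the first affine layer of $\bK_k$. As a result, the realizations compose and the depth satisfies $\cL(\bK_k \bullet \bX_{k-1}) = \cL(\bK_k) + \cL(\bX_{k-1}) - 1$. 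For the base case we take $\bX_0 = ((\Id_{\R^d}, 0)) \in \R^{d\times d}\times\R^d$. It has depth $1$, width $d$ and realization $x \mapsto x$.

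To construct $\bK_k$ we run two networks in parallel on the input $y$. The first is $\fr$ itself. The second is an identity network $\mathfrak{I}$ of depth $\cL(\fr)$ whose hidden layers carry $(\fM(y), \fM(-y)) \in \R^{2d}$. Concretely, its first weight is $\binom{\Id}{-\Id}$, its intermediate layers are $\Id_{\R^{2d}}$, and its last weight is $(\Id, -\Id)$. This network has width $2d$ in the hidden layers and realizes $y \mapsto y$ for every depth at least $2$; this is where the assumption $\cL(\fr)\ge 2$ is used. Because both networks have the same depth $\cL(\fr)$, the parallelization of \cite[Lemma~2.2.14]{jentzen2023mathematicalintroductiondeeplearning} applies. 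It yields a network with input $y$ and output $(y, (\cR(\fr))(y)) \in \R^{d+1}$, whose hidden widths are at most $2d + \cW(\fr)$. We then replace its last affine layer $(W, B)$ by $(AW, AB)$, where $A = (\Id_{\R^d}, p_k) \in \R^{d\times(d+1)}$. This multiplication does not change the depth. The hidden widths also stay the same, and the input and output dimensions are both $d$. The resulting $\bK_k$ therefore realizes $\phi_k$.

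It remains to check the two items. For \cref{it:ANN_wos_realization}, we have $\cR(\bX_k) = \phi_k \circ \cR(\bX_{k-1})$, and this is exactly the stated recursion. For the depth in \cref{it:ANN_wos_size}, the composition formula gives $\cL(\bX_k) = \cL(\bX_{k-1}) + \cL(\fr) - 1$. With $\cL(\bX_0) = 1$, induction gives $\cL(\bX_k) = k(\cL(\fr)-1)+1$. For the width, the layer widths of $\bX_k$ are those of $\bX_{k-1}$ with the final output width $d$ removed, followed by those of $\bK_k$ with the input width $d$ removed. Each of these is at most $\max\{2d + \cW(\fr), d\}$, and this maximum equals $2d+\cW(\fr)$. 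The width bound then also follows by induction.

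The only delicate point is the bookkeeping of depth and width under composition and parallelization. Parallelization needs equal depths, which is why we pad the identity network to depth exactly $\cL(\fr)$. In the composition, the merged affine layers must not add a hidden layer; otherwise the exact depth identity for $\cL(\bX_k)$ would fail. The assumption $\cW(\fr)\ge2$ is not needed for the construction itself; it only helps ensure $d \le 2d+\cW(\fr)$.
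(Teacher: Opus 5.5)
Your proof is correct and follows the route the paper indicates: induction on $k$, with each step composing a one-step network for $y\mapsto y+(\cR(\fr))(y)\,p_k$. That network is built by parallelizing $\fr$ with a depth-$\cL(\fr)$ ReLU identity network of width $2d$, and absorbing the output combination into the last affine layer via $A=(\Id_{\R^d},p_k)$ is exactly the paper's ``summation'' step. One cosmetic point: the cited parallelization acts on separate inputs, so you should precompose with the depth-one duplication network $y\mapsto(y,y)$; it merges into the first layer and changes neither the depth nor the hidden widths.
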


\begin{lemma} \label{lem:ANN_wos_time}
    Let $d \in \N$, 
    let $D \subseteq \R^d$ be open, bounded,
    let $\mathfrak{r} \in \bN$ satisfy for all $x \in D$ that $\cR \pr{\mathfrak{r}} \in C \pr{\R^d, \R}$, $\min \cu{\cL \pr{\fr}, \cW \pr{\fr}} \ge 2$ and $0 \le \pr{ \cR\pr{\mathfrak{r}} } \pr{x} \le r\pr{x}$, 
    let $\pr{p_k}_{k \in \N} \subseteq \R^d$ satisfy for all $k \in \N$ that $\norm{p_k} = 1$,
    let $\pr{v_k}_{k \in \N} \subseteq  [0, \infty)$,
    and for every $x \in D$ let $\pr{P^x_k}_{k \in \N_0} \subseteq \R^d$, $\pr{T^x_k}_{k \in \N_0} \subseteq [0, \infty)$ satisfy for every $k \in \N$ that $P_0^x = x$, $P_k^x = P_{k - 1}^x + \pr{\cR\pr{\fr}} \pr{P_{k - 1}^x} p_k$, $T^x_0 = 0$ and $T^x_{k} = T^x_{k - 1} + \pr{\pr{\cR\pr{\fr}} \pr{P_{k - 1}^x}}^2 v_k$.
    Then there exist $\pr{\Theta^\eps_k}_{\pr{k, \eps} \in \N_0 \times (0,1]} \subseteq \bN$
    \begin{enumerate}[label=(\roman*)]
        \item \label{it:ANN_wos_time_realization} it holds for all $k \in \N$, $\eps \in (0,1]$, $x \in D$ that $\cR \pr{\Theta^\eps_0}, \cR \pr{\Theta^\eps_k} \in C\pr{\R^d, \R}$, $\pr{\cR \pr{\Theta^\eps_0}} \pr{x} = 0$,
        \begin{equation}
            \begin{split} \label{eq:ANN_wos_time_realization}
                \abs*{ \pr{\cR \pr{\Theta_k^\eps}} \pr{x} - T_k^x } &\le \eps \SmallSum{j = 1}{k} v_j,
            \end{split}
        \end{equation}
        and
        \item \label{it:ANN_wos_time_size} it holds for all $k \in \N$, $\eps \in (0,1]$ that $\cL \pr{\Theta^\eps_0} = 1$, $ \cW \pr{ \Theta_0^\eps } = d$,
        \begin{align} \label{eq:ANN_wos_time_size}
            &\hspace{-0.5cm}\cL \pr{\Theta_k^\eps} \le \kappa_1 + \kappa_2 \log_2 \pr*{\tfrac{1}{\eps}} + \kappa_3 \log_2\pr{\diam \pr{D}} + k \pr{\cL \pr{\fr} - 1}, \qquad \text{and}\\
            &\hspace{-0.5cm}\cW \pr{ \Theta_k^\eps } \le \max\cu{2, d} + k \max\cu*{ \kappa_1 + \kappa_2 \log_2 \pr*{\tfrac{1}{\eps}} + \kappa_3 \log_2\pr{\diam \pr{D}}, 2d + \cW \pr{\fr} }, \nonumber
        \end{align}
        where $\kappa_1, \kappa_2, \kappa_3 \in [0, \infty)$ are the absolute constants from \cref{lem:ANN_product}.
    \end{enumerate}
\end{lemma}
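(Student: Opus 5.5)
We argue by induction on $k$, constructing $\Theta^\eps_k$ together with the spatial networks $\bX_k$ from \cref{lem:ANN_wos} (applied with the same $\fr$ and $(p_k)_{k\in\N}$). The key observation is that $(\cR(\bX_{k-1}))(x) = P^x_{k-1}$ for all $x\in D$, so $(\cR(\fr))(P^x_{k-1})$ can be computed by composing $\fr$ with $\bX_{k-1}$. The time recursion $T^x_k = T^x_{k-1} + ((\cR(\fr))(P^x_{k-1}))^2 v_k$ then only needs one extra primitive, namely squaring a scalar. We obtain it from \cref{lem:ANN_product} via the diagonal $y\mapsto (\cR(\Pi))(y,y)$, with $c=\diam(D)$ and accuracy $\eps$.

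For $k=0$, take $\Theta^\eps_0$ to be the zero network $((0_{1\times d},0))$, which has $\cL=1$ and $\cW=d$. For the inductive step, build a network that carries the triple $(P^x_{k-1}, \text{running time})$ through its layers. It applies $\fr$ to $P^x_{k-1}$, then feeds $(\rho,\rho)$ with $\rho=(\cR(\fr))(P^x_{k-1})$ into $\Pi$, multiplies the result by $v_k$, and adds it to the running time. Meanwhile the identity on the time coordinate is carried along by the ReLU identity $t=\fM(t)-\fM(-t)$, which accounts for the width $2$ in $\max\{2,d\}$. The next position is obtained exactly as in \cref{lem:ANN_wos}.

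The error bound is a telescoping argument. Since $P^x_{j-1}\in D$ (because $0\le \cR(\fr)\le r$ and $\norm{p_j}=1$ keep the walk in $\ol D$), the radius satisfies $0\le \rho\le r(P^x_{j-1})\le\diam(D)$. Hence $\abs{(\cR(\Pi))(\rho,\rho)-\rho^2}\le\eps$, and each step contributes an error of at most $\eps v_j$. Summing over $j=1,\dots,k$ gives \eqref{eq:ANN_wos_time_realization}. Note that the spatial part is exact, so errors do not propagate through positions: only additive time errors accumulate. This is the reason the bound is linear in $\sum_j v_j$ with no Lipschitz factors.

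The main obstacle is the architecture bookkeeping needed to obtain \eqref{eq:ANN_wos_time_size}. Naively composing $\Pi$ at every step would give a depth growing like $k(\cL(\fr)+\cL(\Pi))$, but the claimed depth contains the $\Pi$-term only once. I would avoid the extra depth by not serializing the multiplications. Instead, run the spatial chain $\bX_0,\dots,\bX_{k-1}$ from \cref{lem:ANN_wos} in one deep network of depth $k(\cL(\fr)-1)+1$, extracting after each stage the radius $\rho_j=(\cR(\fr))(P^x_{j-1})$. Each $\rho_j$ is then passed forward via identity channels, each of width $2$. At the end, apply $k$ copies of $\Pi$ in parallel to $(\rho_j,\rho_j)$, and take the linear combination $\sum_j v_j(\cR(\Pi))(\rho_j,\rho_j)$ in the final affine layer. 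This gives depth at most $\cL(\Pi)+k(\cL(\fr)-1)$, matching the stated bound. The width per parallel slot is at most $\max\{\cW(\Pi),\,2d+\cW(\fr)\}$ plus carried channels, which yields the stated $k\max\{\cdot,\cdot\}$ term. The most delicate point is checking that the depth-padding identity networks and the parallelization lemmas (the composition, parallelization and summation results cited before \cref{lem:ANN_wos}) produce exactly these counts rather than slightly larger constants.
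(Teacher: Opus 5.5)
Your construction is correct and essentially follows the route the paper sketches. It uses the same ingredients: exact spatial networks from \cref{lem:ANN_wos}, squaring via the diagonal of the product network $\Pi$ from \cref{lem:ANN_product} with $c=\diam(D)$, purely additive accumulation of the time errors, and product networks placed in parallel rather than in series, so that $\cL(\Pi)$ enters the depth only once. The one organisational difference: the paper's ``induction plus summation'' is most naturally read as adding, at step $k$, a new summand $x\mapsto v_k(\cR(\Pi))(\rho,\rho)$ with $\rho=(\cR(\fr))((\cR(\bX_{k-1}))(x))$ to a depth-padded $\Theta^\eps_{k-1}$, which explains the form $\max\{2,d\}+k\max\{\cdot,\cdot\}$. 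You instead share a single spatial chain and carry the radii forward, which gives the same depth and width bounds.

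Two small points to tighten. First, the walk can land on $\partial D$, where $0\le\cR(\fr)\le r$ is not assumed; continuity of $\cR(\fr)$ and $r$ forces $(\cR(\fr))(y)=0$ for $y\in\partial D$, so $\rho_j\in[0,\diam(D)]$ still holds. Second, \cref{lem:ANN_product} only gives a term $\kappa_3(\log_2\diam(D))^+$, so, exactly as in the statement itself, matching the claimed bounds tacitly requires $\diam(D)\ge 1$.
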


The following lemma provides an ANN approximation of the Monte Carlo estimator \eqref{eq:estimator_intro} in the case of $h_1 \equiv 0$.
Before stating \cref{lem:ANN_MC_f=0}, we briefly explain some of the mathematical objects appearing in \cref{lem:ANN_MC_f=0}.
The ANN $\fr \in \bN$ in \cref{lem:ANN_MC_f=0} denotes a function which determines the radii of the successive spheres on which the Walk-on-Spheres process takes place.
For $x \in D$ and index $i \in \cu{1,\dots, n}$ the sequence $(P_{k}^{x, i})_{k \in \N_0} \subseteq \R^d$ denotes the (deterministic) Walk-on-Spheres process, with start in $x \in D$, unit directions $\pr{p_{k}^{i}}_{k \in \N} \subseteq \R^d$, and means of determining the radii of the successive spheres through the ANN $\fr$.
The sequence $(T^{x, i}_{k})_{k \in \N_0} \subseteq [0, \infty)$ denotes the corresponding exit times of successive spheres of the associated process $(P^{x, i}_{k})_{k \in \N_0}$, where the real number $v_k^{i} \in [0, \infty)$ takes on the role of the sampled exit time of a Brownian motion from the unit sphere.

\begin{lemma} \label{lem:ANN_MC_f=0}
    Let $d, n, M \in \N$, $\eps \in (0,1]$,
    let $D \subseteq \R^d$ be open, bounded,
    let $L \in [0, \infty)$,
    let $\fh, \fr \in \bN$ satisfy for all $x \in \ol{D}$, $s, t \in \R$ that $\cR\pr{\fh} \in C \pr{\R^{d + 1}, \R}$, $\cR\pr{\fr} \in C\pr{\R^d, \R}$, 
    $\abs{\pr{ \cR \pr{\fh}} \pr{s, x} - \pr{ \cR \pr{\fh}} \pr{t, x}} \le L \abs{s - t}$,
    $\min \cu{ \cL \pr{\fr}, \cW \pr{\fr} } \ge 2$, and $0 \le \pr{\cR \pr{\fr}} \pr{x} \le r\pr{x}$,
    for every $i \in \N \cap \br{1,n}$ let $\pr{p_k^i}_{k \in \N}, \subseteq \R^d$ satisfy for all $k \in \N$ that $\norm{p_k^i} = 1$,
    for every $i \in \N \cap \br{1,n}$ let $\pr{v_k^i}_{k \in \N} \subseteq [0, \infty)$,
    and for every $x \in D$, $i \in \N \cap \br{1,n}$ let $\pr{P^{x,i}_k}_{k \in \N_0} \subseteq \R^d$, $\pr{T^{x,i}_k}_{k \in \N_0} \subseteq [0, \infty)$ satisfy for every $k \in \N$ that $P_0^{x, i} = x$, $P_k^{x, i} = P_{k - 1}^{x, i} + \pr{\cR\pr{\fr}} \pr{P_{k - 1}^{x, i}} p_{k}^{i}$, $T^{x, i}_{0} = 0$ and $T^{x, i}_{k} = T^{x,i}_{k - 1} + \pr{\pr{\cR\pr{\fr}} \pr{P_{k - 1}^{x, i}}}^2 v_k^i$.
    Then there exists $\Phi = \Phi_{n, M}^{\fr, \eps} \in \bN$ such that
    \begin{enumerate}[label=(\roman*)]
        \item \label{it:ANN_MC_f=0_approximation} it holds for all $x \in D$ that
        \begin{equation}
            \abs*{\pr{\cR \pr{\Phi}} \pr{x} - \frac{1}{n} \SmallSum{i = 1}{n} \pr{\cR \pr{\fh}} \pr{T_M^{x, i}, P_M^{x, i}} } \le \frac{\eps L}{n} \SmallSum{i = 1}{n} \SmallSum{k = 1}{M} v_k^i
        \end{equation}
        and
        \item \label{it:ANN_MC_f=0_size} it holds that 
        \begin{align}
            \cL \pr{\Phi} &\le \cL \pr{\fh} + \kappa_1 + \kappa_2 \log_2 \pr*{\tfrac{1}{\eps}} + \kappa_3 \log_2\pr{\diam \pr{D}} + M \pr{\cL \pr{\fr} - 1}  \quad \text{and} \nonumber \\
            \cW \pr{\Phi} &\le n \max \Big\{ \cW \pr{\fh}, \max \cu{2, d} + 2d + \cW \pr{\fr} \\
            &\quad+ M \max\!\cu*{ \kappa_1 + \kappa_2 \log_2 \pr*{\tfrac{1}{\eps}} + \kappa_3 \log_2\pr{\diam \pr{D}}, 2d + \cW \pr{\fr} } \!  \Big\}, \nonumber
        \end{align}
        where $\kappa_1, \kappa_2, \kappa_3 \in [0, \infty)$ are the absolute constants from \cref{lem:ANN_product}.
    \end{enumerate}
\end{lemma}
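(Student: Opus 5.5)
For each sample index $i \in \N \cap \br{1,n}$ I will build a subnetwork computing $x \mapsto \pr{\cR\pr{\fh}}\pr{\tilde T^{x,i}, P_M^{x,i}}$, where $\tilde T^{x,i}$ is an approximation of $T_M^{x,i}$. Then I parallelize the $n$ subnetworks and take their arithmetic mean with a final affine layer. The construction proceeds in four steps.

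First, fix $i$. I apply \cref{lem:ANN_wos} with $p_k \with p_k^i$ to obtain $\bX_M^{i}$ satisfying $\pr{\cR\pr{\bX_M^i}}\pr{x} = P_M^{x,i}$ for all $x \in D$. By induction on the recursion in \cref{it:ANN_wos_realization} of \cref{lem:ANN_wos}, the realization agrees with $P^{x,i}_k$ at every step. I then apply \cref{lem:ANN_wos_time} with $p_k \with p_k^i$, $v_k \with v_k^i$ and accuracy $\eps$. This gives $\Theta_M^{\eps,i}$ with $\abs{\pr{\cR\pr{\Theta^{\eps,i}_M}}\pr{x} - T^{x,i}_M} \le \eps \sum_{k=1}^M v_k^i$. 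The assumption $0 \le \cR\pr{\fr} \le r$ on $\ol{D}$ is used here, via the hypotheses of \cref{lem:ANN_wos_time}.

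Second, I parallelize $\Theta^{\eps,i}_M$ and $\bX^i_M$ into one network with input $x$ and output $\pr{\tilde T^{x,i}, P^{x,i}_M} \in \R^{d+1}$. The two networks generally have different depths: $\cL\pr{\bX_M^i} = M\pr{\cL\pr{\fr}-1}+1$ is at most the depth bound for $\Theta_M^{\eps,i}$. I therefore extend the shallower one by identity layers. The paper's parallelization results permit this, and for ReLU an identity on $\R^d$ costs width $2d$. This depth-matching is the step I expect to need the most care. The width budget $\max\cu{2,d} + 2d + \cW\pr{\fr} + M\max\cu{\cdots}$ is chosen to absorb exactly this. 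It is the sum of the width bounds from \cref{lem:ANN_wos_time} and \cref{lem:ANN_wos}, with the $2d$ covering the identity padding. In fact, the construction of $\Theta$ presumably already carries $\bX$ internally, so one can simply read $P_M^{x,i}$ off it.

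Third, I compose the combined network with $\fh$ via the composition results. The depth adds as $\cL\pr{\fh} + \cL\pr{\Theta^{\eps,i}_M}$, possibly minus one, which matches the claimed bound after inserting the depth estimate from \cref{lem:ANN_wos_time}. The width is the maximum of $\cW\pr{\fh}$ and the width of the combined network.

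Fourth, I parallelize over $i = 1,\dots,n$ with a shared input $x$. The summation/linear-combination result with weights $\tfrac1n$ then yields $\Phi$ with width at most $n$ times the per-sample width and unchanged depth. For the error, the Lipschitz assumption on $\cR\pr{\fh}$ in the time variable, together with \cref{it:ANN_wos_time_realization} of \cref{lem:ANN_wos_time}, gives
\[
\abs{\pr{\cR\pr{\fh}}\pr{\tilde T^{x,i},P_M^{x,i}} - \pr{\cR\pr{\fh}}\pr{T_M^{x,i},P_M^{x,i}}} \le L\eps\SmallSum{k=1}{M} v_k^i .
\]
Averaging over $i$ yields \cref{it:ANN_MC_f=0_approximation}.

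One point needs checking. The Lipschitz hypothesis is stated only for $x \in \ol{D}$, so I must verify that $P_M^{x,i} \in \ol{D}$. This follows inductively from $\cR\pr{\fr} \le r$ and $\norm{p_k^i} = 1$: each step moves within the closed ball of radius $r\pr{P_{k-1}^{x,i}}$, which lies in $\ol{D}$.
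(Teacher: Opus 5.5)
Your proposal is correct and follows essentially the same route as the paper. You build $\bX_M^i$ and $\Theta_M^{\eps,i}$ via \cref{lem:ANN_wos,lem:ANN_wos_time}, parallelize them, compose with $\fh$, parallelize over $i$ with a final averaging layer, and get the error bound from the Lipschitz continuity of $\cR(\fh)$ in the time variable. Your inductive check that $P_M^{x,i} \in \ol{D}$, which is what makes that Lipschitz hypothesis applicable, is a detail the paper's proof leaves implicit.
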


\begin{proof}
    First, observe that \cref{lem:ANN_wos} (applied for every $i \in \N \cap \br{1,n}$ with $d \with d$, $\fr \with \fr$, $\pr{p_k}_{k \in \N} \with \pr{p_k^i}_{k \in \N}$ in the notation of \cref{lem:ANN_wos})
    and \cref{lem:ANN_wos_time} (applied for every $i \in \N \cap \br{1,n}$ with $d \with d$, $D \with D$, $\fr \with \fr$, $\pr{p_k}_{k \in \N} \with \pr{p_k^i}_{ k \in \N }$, $\pr{v_k}_{k \in \N} \with \pr{ v_k^{i} }_{k \in \N}$, $\pr{P^x_k}_{\pr{x, k} \in D \times \N_0} \with \pr{P^{x, i}_k}_{\pr{x, k} \in D \times \N_0} $, $\pr{T^x_k}_{\pr{x, k} \in D \times \N_0} \with \pr{T^{x, i}_k}_{\pr{x, k} \in D \times \N_0} $ in the notation of \cref{lem:ANN_wos_time})
    demonstrate that there exist $\pr{\bX^{i}_k}_{\pr{i, k} \in \pr{\N \cap \br{1, n} \times \N_0}}, \pr{ \Theta^{\eps, i}_k }_{\pr{i, k} \in \pr{\N \cap \br{1,n}} \times \N_0 } \subseteq \bN$ such that
    \begin{enumerate}[label=(\Roman*)]
        \item \label{it:ANN_MC_f=0_wos_space_time_realization} it holds for all $i \in \N \cap \br{1,n}$, $k \in \N_0$, $x \in D$ that $\cR \pr{ \bX_k^{i}} \in C \pr{\R^d, \R^d}$,  $\cR \pr{\Theta_k^{\eps, i}} \in C \pr{\R^d, \R}$,
        \begin{equation}
            \begin{split}
                \pr{\cR \pr{\bX_k^{i}}} \pr{x} = P_k^{x, i} 
                \qquad \text{and} \qquad \abs*{ \pr{\cR \pr{\Theta_k^{\eps, i}}} \pr{x} - T_k^x } \le \eps \SmallSum{j = 1}{k} v_j,
            \end{split}
        \end{equation}
        and
        \item it holds for all $i \in \N \cap \br{1,n}$, $k \in \N$ that $ \cD \pr{\bX_k^i} = \cD \pr{\bX_k^{1}} $, $\cD \pr{\Theta_k^{\eps, i}} = \cD \pr{\Theta_k^{\eps, 1}}$,
        $\cL \pr{\bX_0^i} = \cL \pr{\Theta_0^{\eps, i}} = 1$, $\cW \pr{\bX_0^i} = \cW \pr{\Theta_0^{\eps, i}} = d$, 
        \begin{align}
            \cL \pr{\bX_k^i} &= k \pr{\cL \pr{\fr} - 1} + 1, \qquad \cW \pr{\bX_k^i} \le 2d + \cW \pr{\fr} \\
            \cL \pr{\Theta_k^{\eps, i}} &\le \kappa_1 + \kappa_2 \log_2 \pr*{\tfrac{1}{\eps}} + \kappa_3 \log_2\pr{\diam \pr{D}} + k \pr{\cL \pr{\fr} - 1}, \qquad \text{and}  \nonumber\\
            \cW \pr{\Theta_k^{\eps, i}} &\le \max\cu{2, d} + k \max\cu*{ \kappa_1 + \kappa_2 \log_2 \pr*{\tfrac{1}{\eps}} + \kappa_3 \log_2\pr{\diam \pr{D}}, 2d + \cW \pr{\fr} }, \nonumber
        \end{align}
        where $\kappa_1, \kappa_2, \kappa_3 \in [0, \infty)$ are the absolute constants from \cref{lem:ANN_product}.
    \end{enumerate}
    This
    \cite[Proposition~2.6]{GrohsHornungJentzen2019published},
    \cite[Corollary~2.23]{GrohsHornungJentzen2019published},
    and \cite[Lemma~2.19]{ackermann2023deep}
    establish that there exists $\Phi \in \bN$ such that
    \begin{enumerate}[label=(\Alph*)]
        \item \label{it:ANN_MC_f=0_Phi_realization} it holds for all $x \in D$ that $\cR \pr{\Phi} \in C \pr{\R^d, \R}$ and 
        \begin{equation}
            \pr{ \cR \pr{\Phi}} \pr{x} = \frac{1}{n} \SmallSum{i = 1}{n} \pr{\cR \pr{\fh}} \pr*{ \pr{\cR \pr{\Theta_M^{\eps, i}}} \pr{x} , \pr{\cR \pr{\bX_M^{i}}}\pr{x} }  
        \end{equation}
        and
        \item \label{it:ANN_MC_f=0_Phi_size} it holds that
        \begin{align}
            \cL \pr{\Phi} &\le \cL \pr{\fh} + \kappa_1 + \kappa_2 \log_2 \pr*{\tfrac{1}{\eps}} + \kappa_3 \log_2\pr{\diam \pr{D}}+ M \pr{\cL \pr{\fr} - 1} \quad \text{and} \nonumber\\
            \cW \pr{\Phi} &\le n \max \Big\{ \cW \pr{\fh}, \max \cu{2, d} + 2d + \cW \pr{\fr} \\
            &\quad+ M \max\!\cu*{ \kappa_1 + \kappa_2 \log_2 \pr*{\tfrac{1}{\eps}} + \kappa_3 \log_2\pr{\diam \pr{D}}, 2d + \cW \pr{\fr} } \!  \Big\}. \nonumber
        \end{align}
    \end{enumerate}
    Observe that \cref{it:ANN_MC_f=0_Phi_realization},
    the assumption that for all $x \in \ol{D}$, $s, t \in \R$ it holds that $\abs*{ \pr{ \cR \pr{\fh}} \pr{s, x} - \pr{ \cR \pr{\fh}} \pr{t, x} } \le L \abs{s - t}$,
    \cref{it:ANN_MC_f=0_wos_space_time_realization},
    and the triangle inequality
    show for all $x \in D$ that 
    \begin{equation}
        \begin{split}
            &\abs*{ \pr{ \cR \pr{\Phi}} \pr{x} - \frac{1}{n} \SmallSum{i = 1}{n} \pr{\cR \pr{\fh}} \pr{T_M^{x, i}, P_M^{x, i}}} \le \frac{L}{n} \SmallSum{i = 1}{n} \abs*{ \pr{\cR \pr{\Theta_M^{\eps, i}}} - T^{x,i}_M } \le \frac{\eps L}{n} \SmallSum{i = 1}{n} \SmallSum{k = 1}{M} v_k^{i}.
        \end{split}
    \end{equation}
    Combining this with \cref{it:ANN_MC_f=0_Phi_size} proves \cref{it:ANN_MC_f=0_approximation,it:ANN_MC_f=0_size}.
    The proof of \cref{lem:ANN_MC_f=0} is thus complete.
\end{proof}

\subsection{ANN approximations for solutions of elliptic PDEs} \label{subsec:ANN_approx}

The next theorem is the main result of this section. We establish that under the assumptions stated below expectations of the form \eqref{eq:expectation_boundary} can be approximated by ANNs in the $L^\infty$-sense, where the size of the approximating ANNs is bounded by a polynomial expression depending on the problem inputs.
In \cref{rem:ANN_discussion}, we discuss the implications of \cref{thm:ANN} for overcoming the curse of dimensionality in the approximation of expectations of the form \eqref{eq:expectation_boundary} by ANNs. Moreover, we comment on further generalizations of \cref{thm:ANN}.

\begin{theo}\label{thm:ANN}
    Let $d \in \N$,
    let $D \subseteq \R^d$ be open, bounded, and convex,
    let $\pr{\Omega, \mathcal{F}, \mathbb{P}}$ be a probability space,
    let $B \colon [0, \infty) \times \Omega \to \R^d$ be a standard Brownian motion,
    for every $x \in \overline{D}$ let $\tau^x = \inf \cu{t \in [0, \infty) \colon x + B_t \in \partial D}$,
    let $L_1, L_2 \in \pr{0, \infty}$,
    let $\fh \in \bN$ satisfy for all $s,t \in [0,\infty)$, $x, y \in \ol{D}$ that
    $\cR \pr{\fh} \in C \pr{\R^{d + 1}, \R}$,
    $\sup_{\pr{u, z} \in [0, \infty) \times \ol{D}} \abs{ \pr{\cR \pr{\fh}} \pr{u, z} } < \infty $, and
    \begin{equation}
        \abs*{ \pr{\cR \pr{\fh}} \pr{s, x} - \pr{\cR \pr{\fh}} \pr{t, y}} \le L_1 \abs{s - t} + L_2 \norm{x - y},
    \end{equation}
    let $w \colon D \to \R$ be continuous and satisfy for all $x \in D$ that $w \pr{x} = \E \br{ \pr{\cR\pr{\fh}} \pr{ \tau^x, x + B_{\tau^x} } }$,
    let $\eta \in [0, \infty)$ be an absolute constant,
    let $\bC \in [0, \infty)$,
    and let $\pr{\fr_\eps}_{\eps \in \pr{0,1}} \subseteq \bN$ satisfy for all $x,y \in D$, $\eps \in \pr{0,1}$ that $\cR \pr{\fr_\eps} \in C \pr{\R^d, \R}$, $\paramANN\pr{\fr_\eps} \le \bC \eps^{-\eta}$,
    \begin{equation} \label{eq:ANN_approx_distance}
        \abs{\pr{\cR \pr{\fr_\eps}} \pr{x} -  r\pr{x}} \le \eps, \qquad \text{and} \qquad \abs{\pr{\cR \pr{\fr_\eps}} \pr{x} - \pr{\cR \pr{\fr_\eps}} \pr{y}} \le \bC \norm{x - y}.
    \end{equation}
    Then there exist an absolute constant $q \in [0, \infty)$ and a constant $c \in [0,\infty)$ that depends only on $\diam \pr{D}$, $\sup_{\pr{u, z} \in [0, \infty) \times \ol{D}} \abs{ \pr{\cR \pr{\fh}} \pr{u, z} }$, $L_1$, $L_2$, $\mathcal{P}\pr{\fh}$, $\bC$, and $d$ and this dependency is at most polynomial such that for every $\eps \in \pr{0,1}$ there exists $\Psi_\eps \in \bN$ with
    \begin{equation} \label{eq:ANN}
        \sup_{x \in D} \abs{ w\pr{x} - \pr{ \cR \pr{\Psi_\eps}} \pr{x} } \le \eps \qquad \text{and} \qquad \paramANN \pr{\Psi_\eps} \le c  \eps^{-q}.
    \end{equation}
\end{theo}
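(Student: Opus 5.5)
The plan is the usual ``Monte Carlo plus ANN calculus'' argument. We build from $\fr_\eps$ an admissible modified distance, apply \cref{cor:l_infty} to get a realization of the Monte Carlo estimator with small uniform error, and then realize that fixed realization by the network of \cref{lem:ANN_MC_f=0}.

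\textbf{Step 1: the radius function.} Fix $\eps\in(0,1)$ and set $\mdist=\max\{\cR(\fr_{\eps'})-\eps',0\}$ for a small $\eps'$ chosen below. This adds one ReLU layer, with parameter count still bounded by $c\,\bC(\eps')^{-\eta}$.
\begin{itemize}
\item Lipschitz constant: $\mdist$ is $\bC$-Lipschitz.
\item Upper bound: by \eqref{eq:ANN_approx_distance}, $0\le\mdist\le r$.
\item Nondegeneracy: for $r(x)\ge\delta$ with $\eps'\le\delta/4$ we get $\mdist(x)\ge r(x)-2\eps'\ge r(x)/2$.
\end{itemize}
Hence $\mdist$ is a $(\tfrac12,\delta)$-distance. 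We take $\delta$ as in \cref{cor:l_infty} with $h_1\equiv0$, $L_1^0=L_1$, $L_2^0=L_2$, $\beta=\tfrac12$, $L=\bC$ and accuracy $\eps/2$. We then take $M,n,K$ from \cref{cor:l_infty} and set $\eps'=\delta/4$. All of these are polynomial in $\eps^{-1}$, $d$, $\diam(D)$, $L_1,L_2$, $\bC$ and $\sup|\cR(\fh)|$.

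\textbf{Step 2: fix a good sample.} \cref{cor:l_infty} gives $\E[\sup_x|w-\bar w|]\le\eps/2$. Separately, by \cref{lem:occ_measure}/\cref{lem:mgf_exit_time_BM} we have $\E[\sum_{i,k}\rho^i_k]=nM/d$, so $\E[\tfrac1n\sum_{i,k}\rho^i_k]=M/d$.

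Markov's inequality applied to both quantities, followed by a union bound, yields some $\omega\in\Omega$ with
\[
\sup_{x}|w(x)-\bar w(x,\omega)|\le 3\eps/2 \quad\text{(to be rescaled to }\eps/2\text{)}
\qquad\text{and}\qquad
\tfrac1n\textstyle\sum_{i,k}\rho_k^i(\omega)\le 3M/d .
\]
To make the bookkeeping clean, apply \cref{cor:l_infty} with accuracy $\eps/6$ instead, so that the first bound reads $\eps/2$.

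\textbf{Step 3: realize the fixed sample by a network.} Set $p_k^i=\xi_k^i(\omega)$ and $v_k^i=\rho_k^i(\omega)$. With $\mdist$ in place of $\cR(\fr)$, the deterministic recursions $P^{x,i}_k,T^{x,i}_k$ of \cref{lem:ANN_MC_f=0} coincide with $\bar X^{x,i}_k(\omega),\bar\tau^{x,i}_k(\omega)$. One technical point: \cref{lem:ANN_MC_f=0} requires $\min\{\cL,\cW\}\ge2$, which can be ensured by padding.

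Apply \cref{lem:ANN_MC_f=0} with Lipschitz constant $L_1$ and product accuracy $\tilde\eps=\eps d/(6L_1M)$. This gives $\Psi_\eps$ with
\[
\sup_x|\cR(\Psi_\eps)(x)-\bar w(x,\omega)|\le \tilde\eps L_1\cdot 3M/d\le\eps/2 .
\]
Together with Step 2, $\sup_x|w-\cR(\Psi_\eps)|\le\eps$.

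\textbf{Step 4: size bound.} From \cref{lem:ANN_MC_f=0}\ref{it:ANN_MC_f=0_size}:
\begin{itemize}
\item depth $\le \cL(\fh)+O(\log(1/\tilde\eps)+\log\diam D)+M\,\cL(\mdist)$;
\item width $\le n\,(\cW(\fh)+O(d+\cW(\mdist)+M\log(1/\tilde\eps)))$.
\end{itemize}
Since $\cL(\mdist),\cW(\mdist)\le\paramANN(\mdist)\le c\,\bC(\delta/4)^{-\eta}$ and $\paramANN\le 2\cL\cW^2$ (up to constants), every factor is polynomial in $\eps^{-1}$ and the listed constants.

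\textbf{Main obstacle.} The delicate step is the polynomial bookkeeping. $M$ from \cref{cor:l_infty} scales like $\delta^{-2}\log(\cdot)$ with $\delta\sim\eps^2$. The grid/Lipschitz factor $(1+\bC)^M$ appears only inside logarithms, through $M\log(1+L)$ in $C$ and in $n$, so it stays polynomial. The remaining check is that $\eta$ enters $q$ only via $(\delta/4)^{-\eta}$, and that the dependence on $\beta=\tfrac12$ is absolute.
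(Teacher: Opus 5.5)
Your proposal is correct and follows essentially the same route as the paper: turn the distance network into a $(\beta,\delta)$-distance by a shift and a ReLU, invoke \cref{cor:l_infty}, realize a fixed sample with \cref{lem:ANN_MC_f=0} at product accuracy of order $d\eps/(L_1 M)$ using $\E[\rho_1^1]=\tfrac1d$, and then carry out the polynomial bookkeeping. The differences are cosmetic (you take $\beta=\tfrac12$ by shifting by $\delta/4$ where the paper takes $\beta=\tfrac13$, and you pick $\omega$ via Markov plus a union bound instead of bounding the expectation of the combined error $\sup_{x}\abs{w(x)-\bar w_\eps(x)}+\tfrac{\gamma_\eps L_1}{n_\eps}\sum_{i,k}\rho_k^i$ by $\eps$), and the only thing to tidy is to cap the product accuracy as $\tilde\eps=\min\{1,\eps d/(6L_1M)\}$, since \cref{lem:ANN_MC_f=0} requires it to lie in $(0,1]$.
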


\begin{proof}
    Throughout this proof
    let $\bB = \sup_{\pr{u, z} \in [0, \infty) \times \ol{D}} \abs{ \pr{\cR \pr{\fh}} \pr{u, z} }$,
    let $Z^j_k\colon \Omega \to \R^d$, $j,k\in \N$, be standard normally distributed, let $\xi^j_k\colon \Omega \to \R^d$, $j,k\in \N$, satisfy for all $j,k\in \N$ that $\xi^j_k=\|Z^j_k\|^{-1}Z^j_k$,
    let $\zeta\colon \Omega \to [0,\infty)$ satisfy $\zeta=\inf\{t\in [0,\infty)| \|B_t\|\ge 1\}$, let $\rho_k^j$, $k,j\in \N$, be i.i.d.\ copies of $\zeta$, assume that $\rho_k^j$, $Z^j_k$, $j,k\in \N$ are independent,
    let $\pr{\delta_\eps}_{\eps \in \pr{0,1}} \subseteq (0, 1]$ satisfy for all $\eps \in \pr{0,1}$ that 
    \begin{equation} \label{eq:ANN_delta}
        \delta_\eps = \min \cu*{1, \frac{\eps^2}{36 \pr{L_1 \diam \pr{D} + L_2 \diam \pr{D}^{\nicefrac{1}{2}} d^{\nicefrac{1}{2}}}^2} },
    \end{equation}
    let $\pr{M_\eps}_{\eps \in \pr{0,1}} \subseteq \N$ satisfy for all $\eps \in \pr{0,1}$ that
    \begin{equation} \label{eq:ANN_M}
        M_\eps = \max\cu*{1, \ceil*{ \frac{\log \pr{24 \bB} + \log\pr{\eps^{-1}} }{ \log \pr*{ 1 + \frac{\delta_\eps^2}{18 \diam \pr{D}^2  } } } } }, %
    \end{equation}
    let $\pr{\gamma_\eps}_{\eps \in \pr{0,1}} \subseteq (0, 1]$ satisfy for all $\eps \in \pr{0,1}$ that
    \begin{equation} \label{eq:ANN_gamma_product}
        \gamma_\eps = \min \cu*{1, \tfrac{d \eps}{2L_1 M_\eps}},
    \end{equation}
    and let $\pr{n_\eps}_{\eps \in \pr{0,1}} \subseteq \N$ satisfy for all $\eps \in \pr{0,1}$ that
    \begin{equation} \label{eq:ANN_n}
        n_\eps = \max \cu*{1, \ceil*{36 c\pr{d, M_\eps, \fh, D, \bC}^{2} \eps^{-2}}^2}, 
    \end{equation}
    where 
    \begin{equation}
        \begin{split} \label{eq:ANN_n_c}
            \hspace{-0.2cm} c \pr{d, M_{\eps}, \fh, D, \bC} &= \pr*{L_2 + \frac{2 L_1 \diam \pr{D}}{d}} d^{\nicefrac{1}{2}} \diam \pr{D} + L_2 \frac{d^{ \nicefrac{1}{2} } \diam \pr{D}}{M_{\eps}^{2}}\\ 
            &\quad+ 3 \sqrt{2} \bB \pr*{ 1 + d^{\nicefrac{1}{2}} + \sqrt{ d \pr{2 \log \pr{M_{\eps}} + M_{\eps} \log \pr{1 + \bC}} + \log \pr{2} } } \\
            &\quad+ \frac{12 \diam \pr{D}^2 L_1 \pr{\log \pr{6} + 3 + \log \pr{M_{\eps}}}}{d^{\nicefrac{1}{2}} M_{\eps}}.
        \end{split}
    \end{equation}
    Combining \cite[Lemma~3.2]{ackermann2023deep},
    \cite[Proposition~2.6]{GrohsHornungJentzen2019published},
    and \eqref{eq:ANN_approx_distance}
    with \cite[Remark~2.8]{beznea2022monte}
    demonstrates for all $\eps \in \pr{0,1}$ that there exists $\fb_\eps \in \bN$ such that it holds for all $x \in \R^d$ that $\cR \pr{\fb_\eps} \in C\pr{\R^d, \R}$, $\cL \pr{\fb_\eps} = \cL\pr{\fr_{\frac{\delta_{\eps}}{3}}} + 1$, $\cW \pr{\fb_\eps} \le \cW \pr{\fr_\frac{\delta_{\eps}}{3}}$,
    \begin{equation}
        \pr{\cR \pr{\fb_\eps}} \pr{x} = \pr{ \pr{\cR \pr{\fr_{\frac{\delta_{\eps}}{3}}}} \pr{x} - \eps }^+,
    \end{equation}
    and that $\cR \pr{\fb_\eps}$ is a $\pr{\frac{1}{3}, \delta_\eps}$-distance (cf., \cref{def:beta_eps_distance}).
    Subsequently, for $\eps \in \pr{0,1}$ we denote by $(\bar{X}^{x, j, \eps}_k)_{\pr{x, k, j} \in D \times \N_0 \times \N}$ the (family of) modified Walk-on-Spheres process and by $\pr{\bar{\tau}^{x, j, \eps}_k}_{\pr{x, k, j} \in D \times \N_0 \times \N}$ the (family of) associated (physical) times generated by the $\pr{\frac{1}{3}, \delta_\eps}$-distance $\cR \pr{\fb_\eps}$ as well as the random variables $\xi_k^i, \rho_k^i$, $i, k \in \N$ (cf., \cref{sec:modified_wos}).
    Moreover, for $\eps \in \pr{0,1}$ let $\bar{w}_\eps \colon D \times \Omega \to \R$ satisfy for all $x \in D$ that 
    \begin{equation}
        \bar{w}_\eps \pr{x} = \frac{1}{n_\eps} \sum_{i = 1}^{n_\eps} \pr{\cR \pr{\fh}} \pr{ \bar{\tau}^{x, i, \eps}_{M_\eps}, \bar{X}^{x, i, \eps}_{M_\eps} }.
    \end{equation}

    \medskip \noindent
    Next, observe that \cref{lem:ANN_MC_f=0} (applied for every $\omega \in \Omega$, $\eps \in \pr{0,1}$ with
    $d \with d$, $n \with n_\eps$, $M \with M_\eps$, $\eps \with \gamma_\eps$, $D \with D$, $L \with L_1$, $\fh \with \fh$, $\fr \with \fb_\eps$, $\pr{p_k^i}_{\pr{i, k} \in (\N \cap \br{1,n}) \times \N} \with \pr{\xi_k^i \pr{\omega}}_{\pr{i, k} \in (\N \cap \br{1,n}) \times \N}$, $\pr{v_k^i}_{\pr{i, k} \in (\N \cap \br{1,n}) \times \N} \with \pr{\rho_k^i \pr{\omega}}_{\pr{i, k} \in (\N \cap \br{1,n}) \times \N}$ in the notation of \cref{lem:ANN_MC_f=0})
    demonstrates that for every $\eps \in \pr{0, 1}$ there exists $\Phi_\eps \colon \Omega \to \bN$ 
    such that 
    \begin{enumerate}[label=(\Roman*)]
        \item \label{it:ANN_approx} it holds for all $\omega \in \Omega$, $x \in D$ that
        \begin{equation}
            \abs*{ \pr{ \cR \pr{ \Phi_\eps \pr{\omega} } } \pr{x} - \bar{w}_\eps\pr{x, \omega}  } \le \frac{\gamma_\eps L_1}{n_\eps} \SmallSum{i = 1}{n_\eps} \SmallSum{k = 1}{M_\eps} \rho_k^{i} \pr{\omega}
        \end{equation}
        and
        \item \label{it:ANN_size}it holds for all $\omega \in \Omega$ that
        \begin{equation}
            \begin{split}
                \cL \pr{\Phi_\eps \pr{\omega}} &\le \cL \pr{\fh} + \kappa_1 + \kappa_2 \log_2 \pr*{\tfrac{1}{\gamma_{\eps}}} + \kappa_3 \log_2\pr{\diam \pr{D}} + M_\eps \pr{\cL \pr{\fb_\eps} - 1}  \quad \text{and} \\
                \cW \pr{\Phi_\eps \pr{\omega}} &\le n_\eps \max \Big\{ \cW \pr{\fh}, \max \cu{2, d} + 2d + \cW \pr{\fb_\eps} \\
                &\quad+ M_\eps \max\!\cu*{ \kappa_1 + \kappa_2 \log_2 \pr*{\tfrac{1}{\gamma_{\eps}}} + \kappa_3 \log_2\pr{\diam \pr{D}}, 2d + \cW \pr{\fb_\eps} } \!  \Big\},
            \end{split}
        \end{equation}
        where $\kappa_1, \kappa_2, \kappa_3 \in [0, \infty)$ are the absolute constants from \cref{lem:ANN_product}.
    \end{enumerate}
    This,
    \eqref{eq:ANN_gamma_product},
    the assumption that $\pr{\rho_k^i}$, $i,k \in \N$, are identically distributed,
    and the fact that $\E \br{ \rho_1^1} = \frac{1}{d}$ (see \cite[e.g., Proposition~2.2.21 or 3.1.8]{port1978brownian}) 
    imply that for all $\eps \in \pr{0,1}$ it holds that
    \begin{equation} \label{eq:ANN_ANN_approx}
        \E \br*{ \sup_{x \in D} \abs*{ \pr{\cR \pr{\Phi_\eps}} \pr{x} - \bar{w}_\eps \pr{x}  }} \le \frac{\gamma_\eps L_1 M_\eps}{d} = \frac{\eps}{2}.
    \end{equation}
    Next, note that \cref{cor:l_infty} (applied for every $\eps \in \pr{0, 1}$ with
    $\eps \with \frac{\eps}{2}$, $h_0 \with \cR \pr{\fh}$, $L_1^0 \with L_1$, $L_2^0 \with L_2$, $h_1 \with 0$, $L_1^1 \with 0$, $L_2^1 \with 0$, $L \with L$, $\mdist \with \cR\pr{\fb_\eps}$ in the notation of \cref{cor:l_infty})
    yields for all $\eps \in \pr{0, 1}$ that
    \begin{equation}
        \E \br*{ \sup_{x \in D} \abs*{ w\pr{x} - \bar{w}_\eps\pr{x} } } \le \frac{\eps}{2}.
    \end{equation}
    This,
    \eqref{eq:ANN_ANN_approx},
    and the triangle inequality yield for all $\eps \in \pr{0,1}$ that
    \begin{equation}
        \begin{split}
            &\E \br*{ \sup_{x \in D} \abs{ w\pr{x} - \pr{ \cR \pr{\Phi_\eps}} \pr{x} } } \le \eps.           
        \end{split}
    \end{equation}
    This implies that there exists $\omega_\eps \in \Omega$, $\eps \in \pr{0,1}$, such that for all $\eps \in \pr{0,1}$ it holds that
    \begin{equation} \label{eq:ANN_deterministic_error}
        \sup_{x \in D} \abs*{ w\pr{x} - \pr{ \cR \pr{ \Phi_\eps \pr{\omega_\eps} } } \pr{x} } \le \eps.
    \end{equation}
    Next, observe that \eqref{eq:ANN_delta}
    and Jensen's inequality
    ensure for all $\eps \in \pr{0,1}$ that
    \begin{equation}
        \begin{split} \label{eq:ANN_delta_upper_bound}
            \delta_\eps^{-1} &= \max\cu*{1, 36 \pr{L_1 \diam \pr{D} + L_2 \diam\pr{D}^{\nicefrac{1}{2}} d^{\nicefrac{1}{2}} }^2 \eps^{-2}} \\ 
            &\le \pr{1 + 72 \diam \pr{D} \pr{L_1^2 \diam \pr{D} + L_2^2}} d \eps^{-2}.
        \end{split}
    \end{equation}
    This,
    \eqref{eq:ANN_M},
    the fact that for all $x \in \pr{0,\infty}$ it holds that $\log\pr{x} \le x$, 
    and the fact that for all $x\in \pr{-1, \infty}$ it holds that $\log \pr{1 + x} \ge \frac{x}{1 + x}$
    demonstrate for all $\eps \in \pr{0,1}$ that
    \begin{equation}
        \begin{split} \label{eq:ANN_M_upper_bound}
            M_\eps &= \max\cu*{1, \ceil*{ \frac{\log \pr{24 \bB} + \log\pr{\eps^{-1}} }{ \log \pr*{ 1 + \frac{\delta_\eps^2}{18 \diam \pr{D}^2  } } } } }, \\
            &\le 2 + 18 \pr{\log \pr{24 \bB} + \log\pr{\eps^{-1}}} \diam \pr{D}^2 \delta_\eps^{-2} + \log \pr{24 \bB} + \log\pr{\eps^{-1}} \\
            &\le c_1 \pr{\fh, D} d^2 \eps^{-5},
        \end{split}
    \end{equation}
    where
    \begin{equation}
        \begin{split}
            c_1 \pr{\fh, D} &= 3 + \log \pr{ 24 \bB }\\
            &\quad+ 18 \pr{ \log \pr{ 24 \bB} + 1} \diam \pr{D}^2 \pr{1 + 72 \diam \pr{D}\pr{L_1^2 \diam \pr{D} + L_2^2}}^2
        \end{split}
    \end{equation}
    This, the fact that for all $x \in \pr{0,\infty}$ it holds that $\log_2\pr{x} \le x$, and
    \eqref{eq:ANN_gamma_product}
    imply for all $\eps \in \pr{0,1}$ that
    \begin{equation} \label{eq:ANN_gamma_product_upper_bound}
        \log_2 \pr{\gamma_{\eps}^{-1}} \le \gamma_\eps^{-1} = \max\cu*{1, \tfrac{2L_1 M_\eps}{d \eps}} \le \pr{1 + 2L_1 c_1 \pr{\fh, D}} d  \eps^{-6}.
    \end{equation}
    Furthermore, observe that \eqref{eq:ANN_n_c} and the fact that for all $\eps \in \pr{0,1}$ it holds that $d, M_{\eps} \in \N$
    assure for all $\eps \in \pr{0,1}$ that
    \begin{equation}
        c\pr{d, M_\eps, \fh, D} \le c_2 \pr{\fh, D, \bC} M_{\eps}^{\nicefrac{1}{2}} d^{\nicefrac{1}{2}},
    \end{equation}
    where 
    \begin{equation}
        \begin{split}
            c_2 \pr{\fh, D, \bC} &= 2 \diam \pr{D} \pr[\big]{L_2 + 6 \diam \pr{D} L_1 \pr{1 + \log \pr{6}} } \\
            &\quad+ 3 \sqrt{2} \bB \pr[\big]{ 2 + \sqrt{2 + \log\pr{2} + \log \pr{1 + \bC}} }.
        \end{split}
    \end{equation}
    This,
    \eqref{eq:ANN_n},
    and Jensen's inequality establish for all $\eps \in \pr{0,1}$ that
    \begin{equation}
        \begin{split}
            n_{\eps} &\le 1 + \pr{1 + 36 c(d, M_{\eps}, \fh, D, \bC)^{2} \eps^{-2} }^2 \le 3 + 2592 c_2 \pr{\fh, D, \bC}^4 M_{\eps}^2 d^{2} \eps^{-4}. 
        \end{split}
    \end{equation}
    This and \eqref{eq:ANN_M_upper_bound} show for all $\eps \in \pr{0,1}$ that
    \begin{equation} \label{eq:ANN_n_upper_bound}
        n_\eps \le c_3 \pr{\fh, D, \bC} d^{6} \eps^{-14},
    \end{equation}
    where $c_3 \pr{\fh, D, \bC} =  3 + 2592 c_{1}\pr{\fh, D}^2 c_{2} \pr{\fh, D, \bC}^4$.
    Combining this, \eqref{eq:ANN_deterministic_error}, 
    \cite[Lemma~3.3]{jentzen2025dnnControl},
    the assumption that for all $\eps \in \pr{0,1}$ it holds that $\paramANN\pr{\fr_\eps} \le \bC \eps^{-\eta}$,
    \eqref{eq:ANN_delta_upper_bound},
    \eqref{eq:ANN_M_upper_bound},
    and \eqref{eq:ANN_gamma_product_upper_bound}
    with \cref{it:ANN_size} proves \eqref{eq:ANN}.
    The proof of \cref{thm:ANN} is thus complete.
\end{proof}

\begin{remark}\label{rem:ANN_discussion}
    We conclude this work with a few remarks concerning the conclusion of \cref{thm:ANN} as well as ways to extend the result.
    \begin{enumerate}[label=(\roman*)]
        \item Observe that \cref{thm:ANN} states an explicit polynomial dependence of the constant $c$ in its conclusion on the problem dimension $d$. Moreover, additional implicit dependencies of $c$ on $d$ may arise through the remaining problem parameters appearing in the theorem such as for example $\diam \pr{D}$.
        \cref{thm:ANN} implies that under the assumptions that these terms grow at most polynomially in the dimension $d \in \N$, i.e., there exists an absolute constant $\bK \in [0, \infty)$ such that for all $T \in \cu{\diam \pr{D}, \sup_{\pr{u, z} \in [0, \infty) \times \ol{D}} \abs{ \pr{\cR \pr{\fh}} \pr{u, z} }, L_1, L_2, \mathcal{P} \pr{\fh}, \bC}$ it holds that $T \le \bK d^{\bK}$, the overall number of parameters of the approximating ANNs grows at most polynomially in the dimension $d$ and the reciprocal $\eps^{-1}$ of the approximation accuracy $\eps$. In this sense, the ANN approximation presented in \cref{thm:ANN} overcomes the curse of dimensionality.
        \item The assumption that $h$ can be represented directly as an ANN can be weakened to the assumption that $h$ can be approximated uniformly on $[0, \infty) \times \ol{D}$ the size of these approximating ANNs appears in the dependency of the constant $c$ in the conclusion of \cref{thm:ANN}.
        \item Note that \cref{lem:ANN_wos,lem:ANN_wos_time,lem:ANN_MC_f=0} can be extended to include non-zero $h_1$ under suitable assumptions as imposed on $h_0$ (i.e., $\fh$) in the approximation of the Monte Carlo scheme \eqref{eq:estimator_intro}. The statement and proof of \cref{thm:ANN} can be extended to this scenario as well.
    \end{enumerate}
\end{remark}


\bibliographystyle{plain}
\bibliography{literature}


\end{document}